\documentclass[a4paper,11pt]{article}

\usepackage{amsmath}
\usepackage{amsthm}
\usepackage{amssymb}
\usepackage{amsbsy}
\usepackage{mathrsfs}

\newtheorem{theorem}{Theorem}[section]
\newtheorem{lemma}[theorem]{Lemma}

\newtheorem{corollary}[theorem]{Corollary}

\newtheorem{claim}{Claim}

\newtheorem*{thm-ring-hyperhole}{Theorem~\ref{thm-ring-hyperhole}}

\begin{document}

\title{Coloring rings}

\author{Fr\'ed\'eric Maffray\thanks{CNRS, Laboratoire G-SCOP, Universit\'e Grenoble-Alpes, Grenoble, France.} \and Irena Penev\thanks{Computer Science Institute of Charles University (I\'{U}UK), Prague, Czech Republic. Email: \texttt{ipenev@iuuk.mff.cuni.cz}. Part of this research was conduced while the author was at the University of Leeds, Leeds, UK. Partially supported by project \texttt{17-04611S (Ramsey-like aspects of graph coloring)} of the Czech Science Foundation, by Charles University project \texttt{UNCE/SCI/004}, and by \texttt{EPSRC} grant \texttt{EP/N0196660/1}.} \and Kristina Vu\v{s}kovi\'c\thanks{School of Computing, University of Leeds, Leeds LS2 9JT, UK. Email: \texttt{k.vuskovic@leeds.ac.uk}. Partially supported by \texttt{EPSRC} grant \texttt{EP/N0196660/1} and by Serbian Ministry of Education and Science projects \texttt{174033} and \texttt{III44006}.}}

\date{\today}

\maketitle

\begin{abstract}
A {\em ring} is a graph $R$ whose vertex set can be partitioned into $k \geq 4$ nonempty sets, $X_1, \dots, X_k$, such that for all $i \in \{1,\dots,k\}$, the set $X_i$ can be ordered as $X_i = \{u_i^1, \dots, u_i^{|X_i|}\}$ so that $X_i \subseteq N_R[u_i^{|X_i|}] \subseteq \dots \subseteq N_R[u_i^1] = X_{i-1} \cup X_i \cup X_{i+1}$. A {\em hyperhole} is a ring $R$ such that for all $i \in \{1,\dots,k\}$, $X_i$ is complete to $X_{i-1}\cup X_{i+1}$. In this paper, we prove that the chromatic number of a ring $R$ is equal to the maximum chromatic number of a hyperhole in $R$. Using this result, we give a polynomial-time coloring algorithm for rings. 

Rings formed one of the basic classes in a decomposition theorem for a class of graphs studied by Boncompagni, Penev, and Vu\v{s}kovi\'c in [{\em Journal of Graph Theory} 91 (2019), 192--246]. Using our coloring algorithm for rings, we show that graphs in this larger class can also be colored in polynomial time. Furthermore, we find the optimal $\chi$-bounding function for this larger class of graphs, and we also verify Hadwiger's conjecture for it. 

\bigskip 

\noindent
\textbf{Keywords:} chromatic number, vertex coloring, algorithms, optimal $\chi$-bounding function, Hadwiger's conjecture.
\end{abstract}

\section{Introduction}

All graphs in this paper are finite, simple, and nonnull. As usual, the vertex set and edge set of a graph $G$ are denoted by $V(G)$ and $E(G)$, respectively; for a vertex $v$ of $G$, $N_G(v)$ is the set of neighbors of $v$ in $G$, and $N_G[v] = N_G(v) \cup \{v\}$. 

A {\em ring} is a graph $R$ whose vertex set can be partitioned into $k \geq 4$ nonempty sets $X_1, \dots, X_k$ (whenever convenient, we consider indices of the $X_i$'s to be modulo $k$), such that for all $i \in \{1,\dots,k\}$ the set $X_i$ can be ordered as $X_i = \{u_i^1, \dots, u_i^{|X_i|}\}$ so that 
\begin{displaymath} 
X_i \subseteq N_R[u_i^{|X_i|}] \subseteq \dots \subseteq N_R[u_i^1] = X_{i-1} \cup X_i \cup X_{i+1}.
\end{displaymath} 
(Note that this implies that $X_1,\dots,X_k$ are all cliques\footnote{A {\em clique} is a set of pairwise adjacent vertices.} of $R$, and that $u_1^1,u_2^1,\dots,u_k^1,u_1^1$ is a hole\footnote{A {\em hole} is an induced cycle of length at least four.} of length $k$ in $R$.) Under such circumstances, we also say that the ring $R$ is of {\em length} $k$, or that $R$ is a {\em $k$-ring}; furthermore, $(X_1, \dots, X_k)$ is called a {\em ring partition} of $R$. A ring is {\em even} or {\em odd} depending on the parity of its length. Rings played an important role in~\cite{VIK}: they formed a ``basic class'' in the decomposition theorems for a couple of graph classes defined by excluding certain ``Truemper configurations'' as induced subgraphs (more on this in subsection~\ref{subsec:Truemper}). In that paper, the complexity of the optimal vertex coloring problem for rings was left as an open problem.\footnote{In fact, only odd rings are difficult in this regard; even rings are readily colorable in polynomial time (see Lemma~\ref{lemma-even-ring-col-alg}).} In the present paper, we give a polynomial-time coloring algorithm for rings (see Theorems~\ref{thm-ring-col-alg} and~\ref{thm-ring-chi-alg}). 

It can easily be shown that every ring is a circular-arc graph. Furthermore, rings have unbounded clique-width. To see this, let $k \geq 3$ be an integer, and let $R$ be a $(k+1)$-ring with ring partition $(X_1,\dots,X_k,X_{k+1})$ such that the cliques $X_i$ are all of size $k+1$, with vertices labeled $0,1,\dots,k$, and furthermore, assume that vertices labeled $p$ and $q$ from consecutive cliques of the ring partition are adjacent if and only if $p+q \leq k$. Now, the graph obtained from $R$ by first deleting $X_{k+1}$, and then deleting all the vertices labeled 0, is precisely the permutation graph $H_k$ defined in~\cite{gr}, and the clique-width of $H_k$ is at least $k$ (see Lemma~5.4 from~\cite{gr}). 

Given graphs $H$ and $G$, we say that $G$ {\em contains} $H$ if $G$ contains an induced subgraph isomorphic to $H$; if $G$ does not contain $H$, then $G$ is {\em $H$-free}. For a family $\mathcal{H}$ of graphs, we say that a graph $G$ is {\em $\mathcal{H}$-free} if $G$ is $H$-free for all $H \in \mathcal{H}$. 

Given a graph $G$, a {\em clique} of $G$ is a (possibly empty) set of pairwise adjacent vertices of $G$, and a {\em stable set} of $G$ is a (possibly empty) set of pairwise nonadjacent vertices of $G$. The {\em clique number} of $G$, denoted by $\omega(G)$, is the maximum size of a clique of $G$, and the {\em stability number} of $G$, denoted by $\alpha(G)$, is the maximum size of a stable set of $G$. A {\em proper coloring} of $G$ is an assignment of colors to the vertices of $G$ in such a way that no two adjacent vertices receive the same color. For a positive integer $r$, $G$ is said to be {\em $r$-colorable} if there is a proper coloring of $G$ that uses at most $r$ colors. The {\em chromatic number} of $G$, denoted by $\chi(G)$, is the minimum number of colors needed to properly color $G$. An {\em optimal coloring} of $G$ is a proper coloring of $G$ that uses only $\chi(G)$ colors. 

Given a graph $G$, a vertex $v \in V(G)$, and a set $S \subseteq V(G) \setminus \{v\}$, we say that $v$ is {\em complete} (resp.\ {\em anticomplete}) to $S$ in $G$ provided that $v$ is adjacent (resp.\ nonadjacent) to every vertex of $S$; given disjoint sets $X,Y \subseteq V(G)$, we say that $X$ is {\em complete} (resp.\ {\em anticomplete}) to $Y$ in $G$ provided that every vertex in $X$ is complete (resp.\ anticomplete) to $Y$ in $G$. 

A {\em hole} is a chordless cycle on at least four vertices; the {\em length} of a hole is the number of its vertices, and a hole is \emph{even} or \emph{odd} according to the parity of its length. When we say ``$H$ is a hole in $G$,'' we mean that $H$ is a hole that is an induced subgraph of $G$. 

A \emph{hyperhole} is any graph $H$ whose vertex set can be partitioned into $k \geq 4$ nonempty cliques $X_1, \dots, X_k$ (whenever convenient, we consider indices of the $X_i$'s to be modulo $k$) such that for all $i \in \{1,\dots,k\}$, $X_i$ is complete to $X_{i-1}\cup X_{i+1}$ and anticomplete to $V(H) \setminus (X_{i-1} \cup X_i \cup X_{i+1})$; under such circumstances, we also say that $H$ is a hyperhole of {\em length} $k$, or that $H$ is a {\em $k$-hyperhole}. A hyperhole is {\em even} or {\em odd} according to the parity of its length. Note that every hole is a hyperhole, and every hyperhole is a ring. When we say ``$H$ is a hyperhole in $G$,'' we mean that $H$ is a hyperhole that is an induced subgraph of $G$. 

Hyperholes can be colored in linear time~\cite{hyperhole}. Furthermore, the following lemma gives a formula for the chromatic number of a hyperhole. 

\begin{lemma} \cite{hyperhole} \label{lemma-hyperhole-chi-formula} Let $H$ be a hyperhole. Then $\chi(H) = \max\Big\{\omega(H),\Big\lceil \frac{|V(H)|}{\alpha(H)} \Big\rceil\Big\}$. 
\end{lemma}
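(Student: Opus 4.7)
The lower bound is immediate: any graph satisfies $\chi\geq\omega$, and any proper coloring partitions $V(H)$ into stable sets each of size at most $\alpha(H)$, so at least $\lceil|V(H)|/\alpha(H)\rceil$ classes are required. The content is therefore entirely in producing a proper coloring of $H$ that achieves exactly $\max\{\omega(H),\lceil|V(H)|/\alpha(H)\rceil\}$ colors.

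I would first pin down the two invariants in terms of the hyperhole structure. Since $X_i$ is anticomplete to $V(H)\setminus(X_{i-1}\cup X_i\cup X_{i+1})$ and $X_{i-1}$ is anticomplete to $X_{i+1}$, every clique of $H$ lies in some $X_i\cup X_{i+1}$, giving $\omega(H)=\max_i(|X_i|+|X_{i+1}|)$. A stable set meets each $X_i$ in at most one vertex, and the indices used must form a stable set in $C_k$, so $\alpha(H)=\lfloor k/2\rfloor$.

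The even case $k=2\ell$ can then be disposed of by a direct alternating construction: write $\omega:=\omega(H)$, color $X_i$ with $\{1,\dots,|X_i|\}$ for odd $i$, and with $\{\omega-|X_i|+1,\dots,\omega\}$ for even $i$; since $|X_i|+|X_{i+1}|\leq\omega$, consecutive palettes are disjoint, yielding $\chi(H)=\omega(H)$, which matches the claimed formula because averaging the $k$ consecutive-pair sums gives $\omega\geq 2|V(H)|/k=|V(H)|/\alpha(H)$.

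The main obstacle is the odd case $k=2\ell+1$, where $\lceil|V(H)|/\ell\rceil$ may strictly exceed $\omega(H)$ (witness: $C_5$ blown up with equal cliques of size $t$ satisfies $\omega=2t$ but needs $\lceil 5t/2\rceil$ colors). I would handle this by induction on $|V(H)|$, with odd holes as the base case (where $\chi=3=\lceil(2\ell+1)/\ell\rceil$). For the inductive step, set $c:=\max\{\omega(H),\lceil|V(H)|/\ell\rceil\}$ and aim to choose a maximum stable set $S$, picking one vertex from each of $\ell$ alternating cliques, so that the induced sub-hyperhole $H\setminus S$ satisfies $\max\{\omega(H\setminus S),\lceil(|V(H)|-\ell)/\ell\rceil\}\leq c-1$; extending the coloring of $H\setminus S$ by a new color on $S$ finishes. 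The crux is exhibiting such an $S$: when $c=\omega(H)$ one must hit, through the $\ell$ removed vertices, every consecutive pair $(X_i,X_{i+1})$ achieving the maximum, while when $c=\lceil|V(H)|/\ell\rceil$ one must also ensure that $\omega(H\setminus S)$ does not rebound above $c-1$. I expect the argument to split according to which term governs $c$, with a careful greedy choice of which $\ell$ cliques contribute to $S$ in each branch; this balancing is where the real combinatorial work of the lemma lies.
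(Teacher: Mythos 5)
The paper itself does not prove this lemma -- it is quoted from the cited reference \cite{hyperhole} -- so your proposal can only be judged on its own terms. The lower bound, the identities $\omega(H)=\max_i(|X_i|+|X_{i+1}|)$ and $\alpha(H)=\lfloor k/2\rfloor$, and the even case (the alternating-palette coloring, which is essentially the paper's Lemma~\ref{lemma-even-ring-col-alg} specialized to hyperholes) are all correct. But for the odd case you stop exactly where the lemma's content lies: you reduce everything to the existence of a maximum stable set $S$ with $\omega(H\setminus S)\leq c-1$, and then write that you ``expect the argument to split \dots with a careful greedy choice,'' without exhibiting $S$ or proving it exists. That existence claim is the whole theorem; as written the proposal is a proof plan with its central step missing. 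A secondary inaccuracy: $H\setminus S$ need not be a hyperhole (if some $|X_i|=1$ the clique $X_i$ may be emptied), so the induction as phrased does not apply to it; one must fall back on chordality/perfection of $H\setminus S$ in that degenerate case.

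For what it is worth, the missing step can be closed cleanly, which shows the gap is one of omission rather than a dead end. Note first that $\lceil(|V(H)|-\ell)/\ell\rceil\leq c-1$ is automatic from $|V(H)|\leq c\ell$, so only $\omega(H\setminus S)\leq c-1$ needs care, and only when $c=\omega(H)$. Any maximum stable set in an odd hyperhole picks one vertex from each of $\ell$ pairwise nonconsecutive cliques and therefore misses exactly one consecutive pair $(X_j,X_{j+1})$; conversely, for each choice of missed pair such a stable set exists. If every consecutive pair satisfied $|X_i|+|X_{i+1}|=\omega(H)$, then summing around the cycle would give $2|V(H)|=k\,\omega(H)$, hence $\lceil|V(H)|/\ell\rceil=\omega(H)+\lceil\omega(H)/(2\ell)\rceil>\omega(H)=c$, a contradiction; so some pair is non-maximum, and routing the unique missed pair there yields the required $S$. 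Spelling this out (together with the chordal fallback above) would turn your sketch into a complete proof.
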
 

The main result of the present paper is the following theorem. 

\begin{theorem} \label{thm-ring-hyperhole} Let $k \geq 4$ be an integer, and let $R$ be a $k$-ring. Then $\chi(R) = \max\{\chi(H) \mid \text{$H$ is a $k$-hyperhole in $R$}\}$.
\end{theorem}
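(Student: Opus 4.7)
The direction $\chi(R) \ge \max\{\chi(H) \mid H \text{ is a $k$-hyperhole in } R\}$ is immediate, since every such $H$ is an induced subgraph of $R$. For the reverse direction, write $c := \max\{\chi(H) \mid H \text{ is a $k$-hyperhole in } R\}$ and produce a proper $c$-coloring of $R$. In view of the footnote preceding the theorem, my plan is to split on the parity of $k$.

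For even $k$, every $k$-hyperhole $H$ in $R$ satisfies $\chi(H) = \omega(H)$, because when $k$ is even one checks that $\max_i(|Y_i|+|Y_{i+1}|) \ge 2|V(H)|/k \ge \lceil |V(H)|/\alpha(H)\rceil$ (since $\alpha(H) = k/2$ for even hyperholes), so the formula in Lemma~\ref{lemma-hyperhole-chi-formula} collapses to $\omega$. Hence $c = \max_{H} \omega(H)$. A maximum clique $K$ of $R$ lies in $X_i \cup X_{i+1}$ for some $i$ (since non-consecutive $X_j$'s are anticomplete), and augmenting $K$ by the universal vertex $u_j^1$ for every $j\notin\{i,i+1\}$ yields a $k$-hyperhole $H^\ast$ in $R$ with $\omega(H^\ast)\ge\omega(R)$, so $c\ge\omega(R)$. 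It then remains to show $\chi(R)\le\omega(R)$ for even rings, which I expect to obtain from Lemma~\ref{lemma-even-ring-col-alg} (the forward-reference promised by the footnote) together with the fact that any hole in a ring uses each $X_i$ in one contiguous arc, giving strong control over odd-hole-like obstructions.

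For odd $k$, where $\chi(R)$ may strictly exceed $\omega(R)$, my plan is induction on $|V(R)|$. The base case is $R$ itself being a hyperhole, which is trivial. For the inductive step, if $R$ is not a hyperhole then some $X_i$ has $|X_i|\ge 2$ (otherwise $R$ is a hole, hence a hyperhole); I delete $v:=u_i^{|X_i|}$, obtaining $R':=R-v$, which is still a $k$-ring because the nested-neighborhood chain $X_i\subseteq N_R[u_i^{|X_i|-1}]\subseteq\dots\subseteq N_R[u_i^1]=X_{i-1}\cup X_i\cup X_{i+1}$ simply loses its smallest member after $v$ is discarded. Every $k$-hyperhole of $R'$ is a $k$-hyperhole of $R$, so the inductive hypothesis supplies a proper $c$-coloring $\phi$ of $R'$, and the task is to extend $\phi$ to $v$.

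The main obstacle is this extension. When some color in $\{1,\dots,c\}$ is absent from $N_R(v)$ we assign it to $v$; the hard subcase is when every color appears on $N_R(v)\subseteq(X_i\setminus\{v\})\cup X_{i-1}\cup X_{i+1}$. Here I would seek a Kempe-chain swap: choose colors $a,b$ so that the component of the bichromatic subgraph $\phi^{-1}(\{a,b\})$ reached from a designated $a$-neighbor of $v$ does not wrap around the cyclic ring and can therefore be recolored to eliminate $a$ from $N_R(v)$. The cyclic-but-staircased structure of $R$ should constrain such components sharply; if a swap cannot be performed, the obstruction should produce an induced $k$-hyperhole $H\subseteq R$ witnessing $\chi(H)>c$, contradicting the choice of $c$. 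Making this case analysis precise — in particular handling the fractional contribution $\lceil|V(H)|/\alpha(H)\rceil$ from Lemma~\ref{lemma-hyperhole-chi-formula} for odd hyperholes, where $\alpha(H)=(k-1)/2$ — is where I expect the main technical effort to lie.
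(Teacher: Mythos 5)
Your easy direction and your even case are fine, and they coincide with what the paper does (its Lemma~\ref{lemma-ring-large-omega} plus Lemma~\ref{lemma-even-ring-col-alg}): even rings satisfy $\chi=\omega$, and a maximum clique sits in two consecutive $X_i$'s and extends to a hyperhole by adding the vertices $u_j^1$. The problem is the odd case. Your skeleton (delete the top vertex $t_i=u_i^{|X_i|}$ of some $X_i$ with $|X_i|\geq 2$, note $R-t_i$ is again a $k$-ring, color it by induction, then either extend the coloring to $t_i$ via bichromatic swaps or exhibit a $k$-hyperhole needing more colors) is indeed the same general strategy as the paper's, but the step you label as ``where I expect the main technical effort to lie'' is not a finishing detail --- it \emph{is} the theorem, and you have not proved it. Concretely, the claim ``if a swap cannot be performed, the obstruction produces an induced $k$-hyperhole $H$ with $\chi(H)>c$'' is exactly the content of the paper's Lemma~\ref{lemma-main-technical}, and nothing in your outline indicates how to get it.

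Two specific reasons why the local picture you describe does not suffice. First, a single Kempe swap anchored at one $a$-colored neighbor of $v$ need not free a color: by the structure of bichromatic components in a ring (the paper's Lemma~\ref{lemma-Rab}), the component reached from that neighbor can meet $X_{i-1}$, $X_i\setminus\{v\}$ and $X_{i+1}$ simultaneously, so it may contain both an $a$-colored and a $b$-colored neighbor of $v$, and ``not wrapping around'' is not the right obstruction criterion. The paper instead normalizes the whole coloring globally: it defines \emph{unimprovable} colorings of $R\setminus t_2$ (anchored at the vertex $s_1$, with a parity-dependent lower/higher condition on where the color $c_1=c(s_1)$ sits in each $X_i$), reached by iterating swaps until a rank function is minimized (Lemma~\ref{lemma-unimprov-alg}). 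Second, when extension fails, the witness hyperhole of chromatic number $r+1$ is not read off from a single stuck component: the paper deletes the entire color class $S$ of $c_1$, strips simplicial vertices to reach a smaller $k$-ring inside $R\setminus S$, applies the induction hypothesis there to get a hyperhole $H$ with $\chi(H)=\lceil 2|V(H)|/(k-1)\rceil=r$, and then uses unimprovability to build a larger hyperhole $W$ with $|V(W)|\geq |V(H)|+\frac{k-1}{2}$, forcing $\chi(W)\geq r+1$ (Claims~3--5 in the proof of Lemma~\ref{lemma-main-technical}). So the induction has to pass through $R\setminus S$ (a whole stable set removed), not just through $R-v$, and the counting argument comparing $|V(W)|$ to $|V(H)|$ is where the bound $\lceil |V(H)|/\alpha(H)\rceil$ actually enters. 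As it stands, your proposal is a plausible plan with the central lemma missing, not a proof.
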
 

It was shown in~\cite{VIK} that all holes of a $k$-ring ($k \geq 4$) are of length $k$;\footnote{In the present paper, this result is stated as Lemma~\ref{lemma-ring-chordal}(b).} consequently, all hyperholes in a $k$-ring are of length $k$. Thus, Theorem~\ref{thm-ring-hyperhole} in fact establishes that the chromatic number of a ring is equal to the maximum chromatic number of a hyperhole in the ring. 

It is easy to see that the stability number of any $k$-hyperhole ($k \geq 4$) is $\lfloor k/2 \rfloor$. Thus, the following is an immediate corollary of Lemma~\ref{lemma-hyperhole-chi-formula} and Theorem~\ref{thm-ring-hyperhole}. 

\begin{corollary} \label{cor-ring-chi-formula} Let $k \geq 4$ be an integer, and let $R$ be a $k$-ring. Then $\chi(R) = \max\Big(\{\omega(R)\} \cup \Big\{\Big\lceil \frac{|V(H)|}{\lfloor k/2 \rfloor} \Big\rceil \mid \text{$H$ is a $k$-hyperhole in $R$}\Big\}\Big)$. 
\end{corollary}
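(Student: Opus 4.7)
The inequality $\chi(R) \geq \max_H \chi(H)$ is immediate: every $k$-hyperhole $H$ in $R$ is an induced subgraph, so $\chi(R) \geq \chi(H)$ for each such $H$. For the reverse, set $c := \max\{\chi(H) : H \text{ is a } k\text{-hyperhole in } R\}$ and aim to prove $\chi(R) \leq c$ by induction on $|V(R)|$.

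The base case is when $R$ itself is a $k$-hyperhole (in particular when each $|X_i| = 1$ and $R$ is a hole), for which $\chi(R) = c$ trivially. In the inductive step, $R$ is not a hyperhole, so some $|X_i| \geq 2$, and I would remove $v := u_i^{n_i}$ with $n_i := |X_i|$ --- the vertex with the smallest neighborhood in $X_i$. The resulting graph $R' := R - v$ is again a $k$-ring, and every hyperhole in $R'$ is a hyperhole in $R$, so the induction hypothesis yields a proper coloring of $R'$ using at most $c$ colors. It remains to extend this coloring to $v$.

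The neighborhood $N_R(v)$ splits into three cliques: $X_i \setminus \{v\}$, $N_R(v) \cap X_{i-1}$, and $N_R(v) \cap X_{i+1}$. Using the nested-neighborhood structure of the ring, $X_i \cup (N_R(v) \cap X_{i+\varepsilon})$ is itself a clique of $R$ for each $\varepsilon \in \{-1,+1\}$, so $|N_R(v) \cap X_{i+\varepsilon}| \leq \omega(R) - n_i \leq c - n_i$, giving $\deg_R(v) \leq 2c - n_i - 1$. When $n_i$ is close to $c$ this already drops to at most $c - 1$ and the extension is automatic; in particular, when $n_i = c = \omega(R)$, the vertex $v$ has no neighbors outside $X_i$ at all and a free color exists.

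The main obstacle is the regime of small $n_i$, where $\deg_R(v)$ can exceed $c - 1$ and no color is automatically available. The key observation enabling an attack is that $N_R(v) \cap X_{i-1}$ and $N_R(v) \cap X_{i+1}$ are anticomplete in $R$ (since $X_{i-1}$ and $X_{i+1}$ are non-consecutive cliques of the ring for $k \geq 4$), so their color palettes in the coloring of $R'$ can in principle overlap freely. I therefore expect the heart of the proof to be a recoloring lemma --- along the lines of Kempe-chain swaps on two-color components touching these two anticomplete cliques --- showing that any $c$-coloring of $R'$ can be modified so that enough colors are shared between these cliques to leave one of the $c$ colors unused on $N_R(v)$. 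I anticipate this step to require a case split on the parity of $k$ and on whether $c = \omega(R)$ or $c = \lceil |V(H^{*})|/\lfloor k/2 \rfloor \rceil$ for some maximum-$\chi$ hyperhole $H^{*}$ in $R$ (as permitted by Lemma~\ref{lemma-hyperhole-chi-formula}), possibly together with a more careful choice of which $u_{i'}^{n_{i'}}$ to delete --- ideally one that does not lie in any maximum-$\chi$ hyperhole, so that the value $c$ is preserved in $R'$ and the extra slack is available for the recoloring.
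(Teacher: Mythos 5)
Your easy direction is fine, and the reduction you gesture at (via Lemma~\ref{lemma-hyperhole-chi-formula} and $\alpha(H)=\lfloor k/2\rfloor$, together with $\omega(R)\le\chi(R)$ and $\omega(H)\le\omega(R)$) is indeed all that is needed \emph{once one knows} that $\chi(R)=\max\{\chi(H)\mid H$ a $k$-hyperhole in $R\}$. But that last statement is Theorem~\ref{thm-ring-hyperhole}, the main theorem of the paper, and your proposal in effect sets out to re-prove it by induction and then leaves its entire core as an ``expected recoloring lemma.'' That is a genuine gap, not a routine verification: the regime you flag (deleting the top vertex $v=u_i^{|X_i|}$ with $|X_i|$ small relative to $c$) is exactly where all the difficulty of the paper lies, and the paper needs the whole of Section~3 to handle it. Moreover, the mechanism you anticipate --- local Kempe swaps on two-colored components meeting $N_R(v)\cap X_{i-1}$ and $N_R(v)\cap X_{i+1}$ so as to free a color on $N_R(v)$ --- is not what works. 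The paper's argument first normalizes the coloring of $R\setminus t_2$ \emph{globally} (the ``unimprovable'' condition of Lemma~\ref{lemma-unimprov-alg}, which constrains the position of one distinguished color $c_1=c(s_1)$ in every clique $X_3,\dots,X_k$, with a parity-dependent direction), and even then it does not extend the coloring by freeing a color on the deleted vertex's neighborhood: Lemma~\ref{lemma-main-technical} instead deletes an entire color class $S$, recurses on the smaller ring hiding inside $R\setminus S$, and in the failure case explicitly constructs a $k$-hyperhole $W$ with $|V(W)|\ge|V(H)|+\frac{k-1}{2}$, certifying $\chi(R)=r+1$ rather than contradicting anything locally. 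A case split on the parity of $k$ and on whether $c=\omega(R)$, as you propose, does not come close to substituting for this; your plan as written is a statement of intent whose key step is precisely the open part.

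Two smaller points. First, your claim ``when $n_i$ is close to $c$ this already drops to at most $c-1$'' only holds when $n_i\ge c$, i.e.\ $n_i=c=\omega(R)$, so the ``automatic'' case is just that single extreme situation (which the paper handles separately via Lemma~\ref{lemma-ring-large-omega}). Second, since the corollary sits downstream of Theorem~\ref{thm-ring-hyperhole} in the paper, the intended (and complete) proof is the two-line derivation you sketch in your first and last sentences; if you do not wish to invoke that theorem, you must actually supply the recoloring/induction machinery, and in that case be aware that ``any $c$-coloring of $R\setminus v$ can be modified to free a color on $N_R(v)$'' is, as an existence statement, equivalent to the inequality $\chi(R)\le c$ you are trying to prove, so asserting it without proof is circular.
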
 

Using Theorem~\ref{thm-ring-hyperhole},\footnote{More precisely, we use Lemma~\ref{lemma-reduce-chi}, which is a corollary of Theorem~\ref{thm-ring-hyperhole} and Lemma~\ref{lemma-main-technical}. Lemma~\ref{lemma-main-technical}, in turn, is the main part of the proof of Theorem~\ref{thm-ring-hyperhole}.} we construct an $O(n^6)$ algorithm that computes an optimal coloring of an input ring (see Theorem~\ref{thm-ring-col-alg}). Furthermore, using Corollary~\ref{cor-ring-chi-formula}, we also give an $O(n^3)$ time algorithm that computes the chromatic number of a ring without actually finding an optimal coloring of that ring (see Theorem~\ref{thm-ring-chi-alg}).

\subsection{Terminology, notation, and paper outline} \label{subsec:Truemper} 

For a function $f:A \rightarrow B$ and a set $A' \subseteq A$, we denote by $f \upharpoonright A'$ the restriction of $f$ to $A'$. 

The complement of a graph $G$ is denoted by $\overline{G}$. For a nonempty set $X \subseteq V(G)$, we denote by $G[X]$ the subgraph of $G$ induced by $X$; for vertices $x_1,\dots,x_t \in V(G)$, we often write $G[x_1,\dots,x_t]$ instead of $G[\{x_1,\dots,x_t\}]$. For a set $S \subsetneqq V(G)$, we denote by $G \setminus S$ the subgraph of $G$ obtained by deleting $S$, i.e.\ $G \setminus S = G[V(G) \setminus S]$; if $G$ has at least two vertices and $v \in V(G)$, then we often write $G \setminus v$ instead of $G \setminus \{v\}$.\footnote{Since our graphs are nonnull, if $G$ has just one vertex, say $v$, then $G \setminus v$ is undefined.} 

A class of graphs is {\em hereditary} if it is closed under isomorphism and induced subgraphs. More precisely, a class $\mathcal{G}$ of graphs is {\em hereditary} if for every graph $G \in \mathcal{G}$, the class $\mathcal{G}$ contains all isomorphic copies of induced subgraphs of $G$. 

A {\em theta} is any subdivision of the complete bipartite graph $K_{2,3}$; in particular, $K_{2,3}$ is a theta. A {\em pyramid} is any subdivision of the complete graph $K_4$ in which one triangle remains unsubdivided, and of the remaining three edges, at least two edges are subdivided at least once. A {\em prism} is any subdivision of $\overline{C_6}$ in which the two triangles remain unsubdivided; in particular, $\overline{C_6}$ is a prism. A {\em three-path-configuration} (or {\em 3PC} for short) is any theta, pyramid, or prism. 

A {\em wheel} is a graph that consists of a hole and an additional vertex that has at least three neighbors in the hole. If this additional vertex is adjacent to all vertices of the hole, then the wheel is said to be a {\em universal wheel}; if the additional vertex is adjacent to three consecutive vertices of the hole, and to no other vertex of the hole, then the wheel is said to be a {\em twin wheel}. A {\em proper wheel} is a wheel that is neither a universal wheel nor a twin wheel. 

A {\em Truemper configuration} is any 3PC or wheel (for a survey, see~\cite{Truemper-survey}). Note that every Truemper configuration contains a hole. Note, furthermore, that every prism or theta contains an even hole, and every pyramid contains an odd hole. Thus, even-hole-free graphs contain no prisms and no thetas, and odd-hole-free graphs contain no pyramids. 

$\mathcal{G}_{\text{T}}$ is the class of all (3PC, proper wheel, universal wheel)-free graphs; thus, the only Truemper configurations that a graph in $\mathcal{G}_{\text{T}}$ may contain are the twin wheels. Clearly, the class $\mathcal{G}_{\text{T}}$ is hereditary. A decomposition theorem for $\mathcal{G}_{\text{T}}$ (where rings form one of the ``basic classes'') was obtained in~\cite{VIK},\footnote{In the present paper, this decomposition theorem is stated as Theorem~\ref{thm-GT-decomp}.} as were polynomial-time algorithms that solve the recognition, maximum weight clique, and maximum weight stable set problems for the class $\mathcal{G}_{\text{T}}$. The complexity of the optimal coloring problem for $\mathcal{G}_{\text{T}}$ was left open in~\cite{VIK}, and the main obstacle in this context were rings. In the present paper, we show that graphs in $\mathcal{G}_{\text{T}}$ can be colored in polynomial time (see Theorems~\ref{thm-GT-col-alg} and~\ref{thm-GT-chi-alg}). 

A {\em simplicial vertex} is a vertex whose neighborhood is a (possibly empty) clique. For an integer $k \geq 4$, let $\mathcal{R}_k$ be the class of all graphs $G$ that have the property that every induced subgraph of $G$ either is a $k$-ring or has a simplicial vertex; clearly, $\mathcal{R}_k$ is hereditary, and furthermore (by Lemma~\ref{lemma-RRk-hered}) it contains all $k$-rings. We remark that graphs in $\mathcal{R}_k$ are precisely the chordal graphs,\footnote{A graph is {\em chordal} if it contains no holes.} and the graphs that can be obtained from a $k$-ring by (possibly) repeatedly adding simplicial vertices (see Lemma~\ref{lemma-RRk-description}). Further, for all integers $k \geq 4$, we set $\mathcal{R}_{\geq k} = \bigcup\limits_{i=k}^{\infty} \mathcal{R}_i$; clearly, $\mathcal{R}_{\geq k}$ is hereditary, and furthermore (by Lemma~\ref{lemma-RRk-hered}) it contains all rings of length at least $k$. In particular, the class $\mathcal{R}_{\geq 4}$ is hereditary and contains all rings. We show that graphs in $\mathcal{R}_{\geq 4}$ can be colored in polynomial time (see Theorems~\ref{thm-ring-col-alg} and~\ref{thm-ring-chi-alg}). 

A {\em clique-cutset} of a graph $G$ is a (possibly empty) clique $C \subsetneqq V(G)$ of $G$ such that $G \setminus C$ is disconnected. A {\em clique-cut-partition} of a graph $G$ is a partition $(A,B,C)$ of $V(G)$ such that $A$ and $B$ are nonempty and anticomplete to each other, and $C$ is a (possibly empty) clique. Clearly, a graph admits a clique-cutset if and only if it admits a clique-cut-partition. 

A graph is {\em perfect} if all its induced subgraphs $H$ satisfy $\chi(H) = \omega(H)$. The Strong Perfect Graph Theorem~\cite{SPGT} states that a graph $G$ is perfect if and only if neither $G$ nor $\overline{G}$ contains an odd hole. 

$\mathbb{N}$ is the set of all positive integers. A hereditary class $\mathcal{G}$ is {\em $\chi$-bounded} if there exists a function $f:\mathbb{N} \rightarrow \mathbb{N}$ (called a {\em $\chi$-bounding function} for $\mathcal{G}$) such that all graphs $G \in \mathcal{G}$ satisfy $\chi(G) \leq f(\omega(G))$. For a hereditary $\chi$-bounded class $\mathcal{G}$ that contains all complete graphs (equivalently: that contains graphs of arbitrarily large clique number), we say that a $\chi$-bounding function $f:\mathbb{N} \rightarrow \mathbb{N}$ for $\mathcal{G}$ is {\em optimal} if for all $n \in \mathbb{N}$, there exists a graph $G \in \mathcal{G}$ such that $\omega(G) = n$ and $\chi(G) = f(n)$. It was shown in~\cite{VIK} that $\mathcal{G}_{\text{T}}$ is $\chi$-bounded by a linear function; more precisely, it was shown that every graph $G \in \mathcal{G}_{\text{T}}$ satisfies $\chi(G) \leq \Big\lfloor \frac{3}{2}\omega(G) \Big\rfloor$.\footnote{See Theorem~7.6 from~\cite{VIK}.} In the present paper, we improve this $\chi$-bounding function, and in fact, we find the optimal $\chi$-bounding function for the class $\mathcal{G}_{\text{T}}$ (see Theorem~\ref{thm-GT-chi}). 

Finally, we consider Hadwiger's conjecture. Let $H$ be an $n$-vertex graph with vertex set $V(H) = \{v_1,\dots,v_n\}$. We say that a graph $G$ {\em contains $H$ as a minor} if there exist pairwise disjoint, nonempty subsets $S_1,\dots,S_n \subseteq V(G)$ (called {\em branch sets}) such that $G[S_1],\dots,G[S_n]$ are all connected, and such that for all distinct $i,j \in \{1,\dots,n\}$ with $v_iv_j \in E(H)$, there is at least one edge between $S_i$ and $S_j$ in $G$. As usual, the complete graph on $k$ vertices is denoted by $K_k$. Hadwiger's conjecture states that every graph $G$ contains $K_{\chi(G)}$ as a minor. Using Theorem~\ref{thm-ring-hyperhole}, we prove that rings satisfy Hadwiger's conjecture (see Lemma~\ref{lemma-Hadwiger-ring}), and as a corollary, we obtain that graphs in $\mathcal{G}_{\text{T}}$ also satisfy Hadwiger's conjecture (see Theorem~\ref{thm-Hadwiger-GT}). 

A {\em hyperantihole} is a graph $A$ whose vertex set can be partitioned into nonempty cliques $X_1,\dots,X_k$ ($k \geq 4$)\footnote{Whenever convenient, we consider indices of the $X_i$'s to be modulo $k$.} such that for all $i \in \{1,\dots,k\}$, $X_i$ is complete to $V(A) \setminus (X_{i-1} \cup X_i \cup X_{i+1})$ and anticomplete to $X_{i-1} \cup X_{i+1}$.\footnote{Note that the complement of a hyperantihole need not be a hyperhole.} Under these circumstances, we also say that the hyperantihole $A$ is of {\em length} $k$, and that $A$ is a {\em $k$-hyperantihole}. A hyperantihole is {\em odd} or {\em even} depending on the parity of its length. 

The remainder of this paper is organized as follows. In section~\ref{sec:prelim}, we state a few results from the literature that we need in the remainder of the paper; in section~\ref{sec:prelim}, we also prove a few easy lemmas about rings and their induced subgraphs, and about classes $\mathcal{R}_k$ and $\mathcal{R}_{\geq k}$ ($k \geq 4$). In section~\ref{sec:main}, we prove Theorem~\ref{thm-ring-hyperhole}, and we also give a polynomial-time coloring algorithm for even rings (see Lemma~\ref{lemma-even-ring-col-alg}). In section~\ref{sec:col}, we give an $O(n^6)$ time coloring algorithm for rings (see Theorem~\ref{thm-ring-col-alg}).\footnote{In fact, this is a coloring algorithm for graphs in $\mathcal{R}_{\geq 4}$. By Lemma~\ref{lemma-RRk-hered}, $\mathcal{R}_{\geq 4}$ contains all rings.} Even rings are easy to color (see Lemma~\ref{lemma-even-ring-col-alg}); our coloring algorithm for odd rings relies on ideas from the proof of Theorem~\ref{thm-ring-hyperhole}. Using our coloring algorithm for rings, as well as various results from the literature, we also construct an $O(n^7)$ time coloring algorithm for graphs in $\mathcal{G}_{\text{T}}$ (see Theorem~\ref{thm-GT-col-alg}). In section~\ref{sec:chi}, we construct an $O(n^3)$ time algorithm that computes the chromatic number of a ring (see Theorem~\ref{thm-ring-chi-alg}),\footnote{In fact, our algorithm computes the chromatic number of graphs in $\mathcal{R}_{\geq 4}$.} and more generally, we construct an $O(n^5)$ time algorithm that computes the chromatic number of graphs in $\mathcal{G}_{\text{T}}$ (see Theorem~\ref{thm-GT-chi-alg}).\footnote{The difference between the algorithms from Theorems~\ref{thm-ring-col-alg} and~\ref{thm-GT-col-alg} on the one hand, and the algorithms from Theorems~\ref{thm-ring-chi-alg} and~\ref{thm-GT-chi-alg} on the other, is that the former compute an optimal coloring of the input graph from the relevant class, whereas the latter only compute the chromatic number (but are significantly faster than the former).} In section~\ref{sec:chibound}, we obtain the optimal $\chi$-bounding function for the class $\mathcal{G}_{\text{T}}$ (see Theorem~\ref{thm-GT-chi}). Furthermore, in section~\ref{sec:chibound}, for each odd integer $k \geq 5$, we obtain the optimal bound for the chromatic number in terms of the clique number for $k$-hyperholes and $k$-hyperantiholes.\footnote{We only defined $\chi$-boundedness for hereditary classes, and so, technically, these are not ``$\chi$-bounding functions'' for the classes of $k$-hyperholes and $k$-hyperantiholes. They are, however, the optimal $\chi$-bounding functions for the closures of these classes under induced subgraphs. See section~\ref{sec:chibound} for the details.} Finally, in section~\ref{sec:Hadwiger}, we prove Hadwiger's conjecture for the class $\mathcal{G}_{\text{T}}$ (see Theorem~\ref{thm-Hadwiger-GT}).

\section{A few preliminary lemmas} \label{sec:prelim}

In this section, we state a few results from the literature, which we use later in the paper. We also prove a few easy results about rings and their induced subgraphs, and about classes $\mathcal{R}_k$ and $\mathcal{R}_{\geq k}$ ($k \geq 4$).  

Given a graph $G$ and distinct vertices $u,v \in V(G)$, we say that $u$ {\em dominates} $v$ in $G$, and that $v$ is {\em dominated} by $u$ in $G$, whenever $N_G[v] \subseteq N_G[u]$. The following lemma was stated without proof in~\cite{VIK} (see Lemma~1.4 from~\cite{VIK}); it readily follows from the definition of a ring, as the reader can check. 

\begin{lemma} \cite{VIK} \label{lemma-ring-char} Let $G$ be a graph, and let $(X_1,\dots,X_k)$, with $k \geq 4$, be a partition of $V(G)$. Then $G$ is a $k$-ring with ring partition $(X_1,\dots,X_k)$ if and only if all the following hold:\footnote{As usual, indices of the $X_i$'s are understood to be modulo $k$.} 
\begin{itemize} 
\item[(a)] $X_1,\dots,X_k$ are cliques; 
\item[(b)] for all $i \in \{1,\dots,k\}$, $X_i$ is anticomplete to $V(G) \setminus (X_{i-1} \cup X_i \cup X_{i+1})$; 
\item[(c)] for all $i \in \{1,\dots,k\}$, some vertex of $X_i$ is complete to $X_{i-1} \cup X_{i+1}$; 
\item[(d)] for all $i \in \{1,\dots,k\}$, and all distinct $y_i,y_i' \in X_i$, one of $y_i,y_i'$ dominates the other. 
\end{itemize} 
\end{lemma}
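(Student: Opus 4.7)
The plan is to verify both implications directly from the definition of a ring, treating the nested closed-neighborhood chain as the central object and translating it into the four combinatorial conditions.

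For the forward direction, assume $G$ is a $k$-ring with ring partition $(X_1,\dots,X_k)$, and fix an ordering $u_i^1,\dots,u_i^{|X_i|}$ of each $X_i$ as in the definition. Then (a) is immediate from $X_i \subseteq N_G[u_i^j]$ for every $j$ (in particular, every pair in $X_i$ is adjacent). Condition (b) follows from $N_G[u_i^j] \subseteq N_G[u_i^1] = X_{i-1} \cup X_i \cup X_{i+1}$, which says that no vertex of $X_i$ has a neighbor outside $X_{i-1} \cup X_i \cup X_{i+1}$. Condition (c) is witnessed by $u_i^1$, whose closed neighborhood equals $X_{i-1} \cup X_i \cup X_{i+1}$ and which therefore is complete to $X_{i-1} \cup X_{i+1}$. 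Condition (d) is just a restatement of the fact that the closed neighborhoods $N_G[u_i^1] \supseteq \dots \supseteq N_G[u_i^{|X_i|}]$ form a chain under inclusion, so of any two vertices of $X_i$, one dominates the other.

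For the converse, assume (a)--(d). Inside each $X_i$, condition (d) says that the closed neighborhoods of the vertices of $X_i$ are pairwise comparable under inclusion; in particular, they are totally preordered, so we can label $X_i = \{u_i^1,\dots,u_i^{|X_i|}\}$ in such a way that $N_G[u_i^1] \supseteq N_G[u_i^2] \supseteq \dots \supseteq N_G[u_i^{|X_i|}]$. It remains to check the two endpoint equalities required by the ring definition. Since $X_i$ is a clique by (a), every vertex of $X_i$ contains $X_i$ in its closed neighborhood, so in particular $X_i \subseteq N_G[u_i^{|X_i|}]$. By (c), some $z \in X_i$ is complete to $X_{i-1} \cup X_{i+1}$, giving $X_{i-1} \cup X_i \cup X_{i+1} \subseteq N_G[z]$; combined with (b), which forces $N_G[z] \subseteq X_{i-1} \cup X_i \cup X_{i+1}$, we get $N_G[z] = X_{i-1} \cup X_i \cup X_{i+1}$. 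Since $u_i^1$ has the largest closed neighborhood among vertices of $X_i$, and this closed neighborhood also lies in $X_{i-1} \cup X_i \cup X_{i+1}$ by (b), we conclude $N_G[u_i^1] = X_{i-1} \cup X_i \cup X_{i+1}$, completing the ordering required by the definition.

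There is no real obstacle here; the lemma is essentially the unpacking of the nested-neighborhood condition in the definition of a ring. The only mild subtlety is noticing that the chain structure on closed neighborhoods inside each $X_i$, which the definition supplies by fiat, is captured cleanly by the pairwise domination statement in (d), so that one can recover a valid ordering from (a)--(d) rather than assuming one.
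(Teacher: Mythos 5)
Your proof is correct and is exactly the routine unpacking of the definition that the paper has in mind: the paper states this lemma without proof (citing Lemma~1.4 of~\cite{VIK} and noting it ``readily follows from the definition of a ring''), and your two directions, including recovering the nested ordering from the pairwise-domination condition (d) together with (a)--(c) for the endpoint equalities, are the intended verification. The only cosmetic nit is that in the converse the inclusion $X_i \subseteq N_G[z]$ comes from (a), not (c), but you clearly use this fact correctly.
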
 

Recall that a graph is {\em chordal} if it contains no holes. The following is Lemma~2.4(a)-(d) from~\cite{VIK}. 

\begin{lemma} \cite{VIK} \label{lemma-ring-chordal} Let $R$ be a $k$-ring ($k \geq 4$) with ring partition $(X_1,\dots,X_k)$. Then all the following hold: 
\begin{itemize} 
\item[(a)] every hole in $R$ intersects each of $X_1,\dots,X_k$ in exactly one vertex; 
\item[(b)] every hole in $R$ is of length $k$; 
\item[(c)] for all $i \in \{1,\dots,k\}$, $R \setminus X_i$ is chordal; 
\item[(d)] $R \in \mathcal{G}_{\text{T}}$. 
\end{itemize} 
\end{lemma}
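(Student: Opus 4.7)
My plan is to first establish (c), then deduce (a) and (b) as easy consequences, and finally handle (d) by case analysis on the three types of Truemper configurations. Throughout, Lemma~\ref{lemma-ring-char} is the workhorse.

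For (c), I fix $i$ and suppose for contradiction that $R\setminus X_i$ contains a hole $H$. Deleting $X_i$ turns the cyclic structure into a ``hyperpath'' on the remaining cliques $X_{i+1},X_{i+2},\ldots,X_{i-1}$, so I pick the first index $j_0$ along this path with $V(H)\cap X_{j_0}\neq\emptyset$. Then $V(H)\cap X_{j_0-1}=\emptyset$ (either because $j_0-1=i$ or by minimality of $j_0$), and Lemma~\ref{lemma-ring-char}(b) confines both $H$-neighbors of any $v\in V(H)\cap X_{j_0}$ to $X_{j_0}\cup X_{j_0+1}$. A short case analysis yields a contradiction: if the two $H$-neighbors of $v$ lie in a single clique, one gets a chord of $H$ directly; if they split as $v'\in X_{j_0}$ and $w\in X_{j_0+1}$, the same analysis applied to $v'$ produces $w'\in V(H)\cap X_{j_0+1}$, and either $w=w'$ (triangle $v,v',w$ in $H$) or $w\neq w'$, in which case the edge $ww'$ is a chord of $H$ unless $H$ is the 4-cycle on $\{v,v',w',w\}$---and in that remaining case, whichever of $v,v'$ dominates the other in $X_{j_0}$ (via Lemma~\ref{lemma-ring-char}(d)) makes $vw'$ or $v'w$ a chord.

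Parts (a) and (b) follow quickly. Let $H$ be a hole of $R$. By (c), $V(H)\cap X_i\neq\emptyset$ for every $i$ (otherwise $H\subseteq R\setminus X_i$, which is chordal). For the upper bound, suppose $V(H)\cap X_i=\{u,u'\}$. Then $uu'\in E(R)$, and since $H$ is chordless this must be an edge of $H$; taking $u'$ to dominate $u$ via Lemma~\ref{lemma-ring-char}(d), the $H$-neighbor $x$ of $u$ distinct from $u'$ satisfies $xu'\in E(R)$, and this edge is a chord of $H$ (otherwise $u,u',x$ would form a triangle in $H$). Hence $|V(H)\cap X_i|=1$ for all $i$, giving $|V(H)|=k$ and proving both (a) and (b).

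For (d), I rule out each Truemper configuration. For a wheel with hole $C$ and extra vertex $v\in X_{i_0}$, Lemma~\ref{lemma-ring-char}(b) confines $v$'s neighbors in $R$ to $X_{i_0-1}\cup X_{i_0}\cup X_{i_0+1}$; by (a) the hole $C$ has exactly one vertex per $X_s$, so $v$ has at most three neighbors in $C$, and those three (when present) are three consecutive vertices of $C$. Since $k\geq 4$, this immediately precludes universal wheels and forces any wheel around $v$ to be a twin wheel. For the three 3PC's (theta, prism, pyramid), every pair of the three paths forms a $k$-hole (combined with the branch vertices or a triangle edge as needed); the resulting length equations force all three paths to have equal length and impose a parity on $k$, and (a) further says each such pairwise hole visits every $X_s$ exactly once, so the interiors of the two chosen paths partition the cyclic arcs between the branch cliques. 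Pigeonholing three paths into two arcs (together with a direct set-theoretic check when one arc is degenerate) yields a contradiction in each case. The main obstacle is this 3PC step: each of theta, prism, and pyramid requires its own arc-decomposition argument, with the pyramid case being the most intricate because the triangle $\{b_1,b_2,b_3\}$ must span three consecutive cliques of the ring (a consequence of (a) applied to the three pairwise holes), requiring extra bookkeeping when partitioning the cycle.
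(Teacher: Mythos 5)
The paper itself gives no proof of this lemma (it is quoted from~\cite{VIK}), so I am judging your argument on its own merits. Parts (c), (a), (b) are correct: the ``first nonempty clique'' argument using Lemma~\ref{lemma-ring-char}(b), with the domination trick of Lemma~\ref{lemma-ring-char}(d) to kill the residual $4$-cycle subcase, is sound, and the deduction of (a) and (b) from (c) plus domination works (strictly you should also dispose of $|V(H)\cap X_i|\geq 3$, but that is immediate by the same chord argument). The wheel half of (d) is also fine once you add the one-line observation that, since each clique meets the hole in exactly one vertex and neighborhoods are confined by Lemma~\ref{lemma-ring-char}(b), the hole visits $X_1,\dots,X_k$ in cyclic order, so the three potential neighbors $c_{i_0-1},c_{i_0},c_{i_0+1}$ of the center really are consecutive on $C$; hence every wheel is a twin wheel.

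The genuine gap is the 3PC half of (d). For the theta your outline does go through: each path's interior is confined to one of the two arcs between the cliques of the two branch vertices, so two of the three interiors lie in the same arc, and the hole formed by those two paths then misses every clique of the other (nonempty) arc, contradicting (a). But for the prism and the pyramid you never derive the contradiction --- you assert that ``pigeonholing three paths into two arcs \dots yields a contradiction in each case'' and defer to ``extra bookkeeping,'' which is exactly the content that is missing. Moreover, the one concrete claim you do make there is not a viable stepping stone: the pyramid's triangle cannot ``span three consecutive cliques,'' because $X_{s-1}$ is anticomplete to $X_{s+1}$ by Lemma~\ref{lemma-ring-char}(b), so the edge $b_1b_3$ could not exist. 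In fact this observation is essentially the whole proof for both prism and pyramid, and no arc decomposition is needed: by (a), the two ends of each triangle edge lie in distinct cliques (they lie together on one of the three pairwise holes), adjacent vertices in distinct cliques lie in consecutive cliques, and three distinct cliques cannot be pairwise consecutive when $k\geq 4$. As written, then, your treatment of theta is a completable sketch, but your treatment of prism and pyramid is a plan built around an incorrect pivot rather than a proof; replacing it with the triangle argument above closes the gap.
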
 

Note that Lemma~\ref{lemma-ring-chordal}(b) states that, for an integer $k \geq 4$, every hyperhole in a $k$-ring is of length $k$. On the other hand, Lemma~\ref{lemma-ring-chordal}(d) implies that $\mathcal{R}_{\geq 4} \subseteq \mathcal{G}_{\text{T}}$,\footnote{Indeed, suppose that $G \in \mathcal{R}_{\geq 4}$, and assume inductively that all graphs in $\mathcal{R}_{\geq 4}$ on fewer than $|V(G)|$ vertices belong to $\mathcal{G}_{\text{T}}$. If $G$ is a ring, then Lemma~\ref{lemma-ring-chordal}(d) guarantees that $G \in \mathcal{G}_{\text{T}}$. So suppose that $G$ is not a ring. Then by the definition of $\mathcal{R}_{\geq 4}$, $G$ has a simplicial vertex, call it $v$. Obviously, $K_1 \in \mathcal{G}_{\text{T}}$, and so we may assume that $|V(G)| \geq 2$. Note that no Truemper configuration contains a simplicial vertex, and so $v$ does not belong to any induced Truemper configuration in $G$. Since $\mathcal{G}_{\text{T}}$ was defined by forbidding certain Truemper configurations as induced subgraphs, we deduce that $G$ belongs to $\mathcal{G}_{\text{T}}$ if and only if $G \setminus v$ does. Now, since $\mathcal{R}_{\geq 4}$ is hereditary and contains $G$, we see that $G \setminus v$ belongs to $\mathcal{R}_{\geq 4}$. It then follows from the induction hypothesis that $G \setminus v$ belongs to $\mathcal{G}_{\text{T}}$, and we deduce that $G \in \mathcal{G}_{\text{T}}$.} but we will not need this in the remainder of the paper.

Rings can be recognized in polynomial time. More precisely, the following is Lemma~8.14 from~\cite{VIK}. (In all our algorithms, $n$ denotes the number of vertices and $m$ the number of edges of the input graph.) 

\begin{lemma} \cite{VIK} \label{lemma-detect-ring} There exists an algorithm with the following specifications: 
\begin{itemize} 
\item Input: A graph $G$; 
\item Output: Either the true statement that $G$ is a ring, together with the length and ring partition of the ring, or the true statement that $G$ is not a ring; 
\item Running time: $O(n^2)$. 
\end{itemize} 
\end{lemma}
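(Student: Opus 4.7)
The plan is to use the structural characterisation in Lemma~\ref{lemma-ring-char} both as a guide for constructing a candidate partition and as a verification criterion. The graph is stored with its $n \times n$ adjacency matrix, so each closed neighbourhood $N_G[v]$ is accessible as a bit-vector that can be compared or combined with another in $O(n)$ time per operation.

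The structural fact exploited is that if $G$ is a $k$-ring with partition $(X_1,\dots,X_k)$, then the ``top'' vertex $u_i^1$ of each $X_i$ satisfies $N_G[u_i^1] = X_{i-1} \cup X_i \cup X_{i+1}$, and moreover for every $v \in X_i$ one has $N_G[v] \subseteq N_G[u_i^1]$, with $u_i^1$ being the \emph{unique} vertex $w$ of inclusion-maximal closed neighbourhood containing $N_G[v]$. Indeed, any candidate dominator $u_j^1$ must satisfy $N_G[v] \subseteq X_{j-1} \cup X_j \cup X_{j+1}$; since $v$ has neighbours in both $X_{i-1}$ and $X_{i+1}$ (namely $u_{i-1}^1$ and $u_{i+1}^1$), this forces $j = i$. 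Consequently the $k$ distinct closed neighbourhoods $N_G[u_1^1], \dots, N_G[u_k^1]$ are exactly the inclusion-maximal closed neighbourhoods in $G$, and they partition the vertex set via the map $v \mapsto u_{i(v)}^1$.

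The algorithm therefore proceeds in three stages. First, sort the vertices by $|N_G[v]|$ in decreasing order and, sweeping through the list, identify the inclusion-maximal closed neighbourhoods using bit-vector comparisons; this yields a candidate list $u_1^1,\dots,u_k^1$ and, by assigning each $v$ to the $u_i^1$ whose closed neighbourhood contains $N_G[v]$, a candidate partition $(X_1,\dots,X_k)$. Second, check that $k \geq 4$ and that $G[\{u_1^1,\dots,u_k^1\}]$ is a hole of length $k$; this fixes the cyclic order of the parts. Third, verify clauses~(a)--(d) of Lemma~\ref{lemma-ring-char} for the candidate partition by direct bit-vector operations: each $X_i$ must be a clique, anticomplete to $V(G) \setminus (X_{i-1} \cup X_i \cup X_{i+1})$, contain a vertex complete to $X_{i-1} \cup X_{i+1}$ (which holds for $u_i^1$ by construction), and be linearly ordered by domination. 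If any stage fails, $G$ is reported as not a ring; otherwise the partition is output.

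The main obstacle is honouring the $O(n^2)$ time bound in the first stage: a naive computation of all pairwise dominance relations costs $\Theta(n^3)$. The saving exploits that in a ring each vertex has a \emph{unique} top dominator, so once the inclusion-maximal closed neighbourhoods are identified each remaining $v$ requires a single inclusion test against the (already short) chain of representatives lying in $N_G[v]$, rather than against every previously seen vertex. When $G$ is not a ring, either the first stage produces a malformed assignment (two incomparable dominator candidates) or some clause of Lemma~\ref{lemma-ring-char} fails in the third stage; in either case the algorithm correctly rejects.
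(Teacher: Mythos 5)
The paper itself gives no proof of this lemma: it is imported verbatim as Lemma~8.14 of~\cite{VIK}, so you are supplying an argument where the paper supplies only a citation. Your structural backbone is sound: in a $k$-ring every $v \in X_i$ is adjacent to $u_{i-1}^1$ and $u_{i+1}^1$, so the only inclusion-maximal closed neighbourhood containing $N_G[v]$ is $N_G[u_i^1] = X_{i-1} \cup X_i \cup X_{i+1}$; the $k$ maximal closed neighbourhoods are pairwise incomparable; the assignment $v \mapsto$ (representative whose neighbourhood contains $N_G[v]$) does recover the ring partition; and since you accept only after verifying all of (a)--(d) of Lemma~\ref{lemma-ring-char} (an ``if and only if''), acceptance is certified and rejection can only occur when a necessary condition fails. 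So correctness of the accept/reject behaviour is fine.

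The genuine gap is the $O(n^2)$ bound, which is the whole content of the lemma. Your ``already short chain of representatives lying in $N_G[v]$'' is only short when $G$ actually is a ring; on an arbitrary input the sweep may have to test $N_G[v] \subseteq N_G[w]$ against $\Theta(n)$ flagged representatives inside $N_G[v]$, at $\Theta(n)$ per test, so stage one as described is $\Theta(n^3)$ in the worst case, and your closing remark that a non-ring is eventually rejected addresses correctness, not running time. The repair is easy but must be stated: in a ring at most three flagged representatives (one each for $X_{i-1}, X_i, X_{i+1}$) can lie in any $N_G[v]$, so the algorithm may reject outright as soon as a fourth flagged vertex is found in some closed neighbourhood, capping the per-vertex work at $O(n)$. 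The same oversight recurs in stage three with condition (d): checking domination for all pairs inside a class of size $\Theta(n)$ by ``direct bit-vector operations'' costs $\Theta(n^3)$; you need to sort each class by degree and test containment only for consecutive pairs, invoking transitivity of domination. With these two amendments your algorithm and the $O(n^2)$ bound go through.
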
 

As an easy corollary of Lemma~\ref{lemma-detect-ring}, we can obtain Lemma~\ref{lemma-detect-ring-ord} (below). We remark that the proof (but not the statement) of Lemma~8.14 from~\cite{VIK} in fact gives precisely Lemma~\ref{lemma-detect-ring-ord}. For the sake of completeness, we give a full proof. 

\begin{lemma} \label{lemma-detect-ring-ord} There exists an algorithm with the following specifications: 
\begin{itemize} 
\item Input: A graph $G$; 
\item Output: Exactly one of the following: 
\begin{itemize} 
\item the true statement that $G$ is a ring, together with the length $k$ and a ring partition $(X_1,\dots,X_k)$ of the ring $G$, and for each $i \in \{1,\dots,k\}$, an ordering $X_i = \{u_i^1,\dots,u_i^{|X_i|}\}$ of $X_i$ such that $X_i \subseteq N_G[u_i^{|X_i|}] \subseteq \dots \subseteq N_G[u_i^1] = X_{i-1} \cup X_i \cup X_{i+1}$, 
\item the true statement that $G$ is not a ring; 
\end{itemize} 
\item Running time: $O(n^2)$. 
\end{itemize} 
\end{lemma}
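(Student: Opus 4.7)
The plan is to reduce this to Lemma~\ref{lemma-detect-ring} by post-processing its output. First, invoke the algorithm from Lemma~\ref{lemma-detect-ring} on $G$ in $O(n^2)$ time. If it reports that $G$ is not a ring, return the same. Otherwise, it returns the length $k$ and a ring partition $(X_1,\dots,X_k)$; it remains only to produce, for each $i$, an ordering of $X_i$ with the required nested-neighborhood property.

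For each $i \in \{1,\dots,k\}$, the proposal is simply to sort the vertices of $X_i$ in weakly decreasing order of $|N_G[v]|$, and call the resulting sequence $u_i^1,\dots,u_i^{|X_i|}$. Computing $|N_G[v]|$ for every vertex takes $O(n^2)$ time, and sorting each $X_i$ by these values takes $O(|X_i|\log|X_i|)$ time, which sums to $O(n\log n)$; so the total running time is $O(n^2)$, as required.

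For correctness, fix $i \in \{1,\dots,k\}$. By Lemma~\ref{lemma-ring-char}(a), $X_i$ is a clique, so $X_i \subseteq N_G[v]$ for every $v \in X_i$; in particular $X_i \subseteq N_G[u_i^{|X_i|}]$. By Lemma~\ref{lemma-ring-char}(b), $N_G[v] \subseteq X_{i-1}\cup X_i\cup X_{i+1}$ for every $v \in X_i$; and by Lemma~\ref{lemma-ring-char}(c), some $v^\star \in X_i$ satisfies $N_G[v^\star] = X_{i-1}\cup X_i\cup X_{i+1}$. This $v^\star$ attains the maximum value of $|N_G[\cdot]|$ over $X_i$, so after sorting it (or any vertex of $X_i$ with the same closed neighborhood) appears in position $1$, giving $N_G[u_i^1] = X_{i-1}\cup X_i\cup X_{i+1}$. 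Finally, by Lemma~\ref{lemma-ring-char}(d), any two vertices of $X_i$ are comparable under domination; since $|N_G[u_i^1]| \geq \dots \geq |N_G[u_i^{|X_i|}]|$, and two vertices that dominate each other with the same $|N_G[\cdot]|$ have equal closed neighborhoods, this forces the chain $N_G[u_i^1] \supseteq \dots \supseteq N_G[u_i^{|X_i|}]$.

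There is really no significant obstacle here: the lemma is essentially a repackaging of Lemma~\ref{lemma-detect-ring} once one observes, via Lemma~\ref{lemma-ring-char}(d), that the domination relation linearly preorders each $X_i$ and is therefore recovered by sorting by closed-neighborhood size. The only mild care needed is the handling of ties, which is trivial since equality of $|N_G[\cdot]|$ together with domination forces equality of the closed neighborhoods.
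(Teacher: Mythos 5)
Your proposal is correct and is essentially the paper's own proof: the paper likewise runs the algorithm of Lemma~\ref{lemma-detect-ring} and then orders each $X_i$ by weakly decreasing degree (equivalent to your ordering by $|N_G[\cdot]|$), leaving the verification as "easy to see," which you have simply spelled out via Lemma~\ref{lemma-ring-char}.
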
 
\begin{proof} 
We first run the algorithm from Lemma~\ref{lemma-detect-ring} with input $G$; this takes $O(n^2)$ time. If the algorithm returns the statement that $G$ is not a ring, then we return this statement as well, and we stop. So assume that the algorithm returned the statement that $G$ is a ring, together with the length $k$ and ring partition $(X_1,\dots,X_k)$ of the ring. We then find the degrees of all vertices of $G$, and for each $i \in \{1,\dots,k\}$, we order $X_i$ as $X_i = \{u_i^1,\dots,u_i^{|X_i|}\}$ so that ${\rm deg}_G(u_i^1) \geq \dots \geq {\rm deg}_G(u_i^{|X_i|})$; this takes $O(n^2)$ time. Since we already know that $(X_1,\dots,X_k)$ is a ring partition of $G$, it is easy to see that for all $i \in \{1,\dots,k\}$, we have that $X_i \subseteq N_G[u_i^{|X_i|}] \subseteq \dots \subseteq N_G[u_i^1] = X_{i-1} \cup X_i \cup X_{i+1}$. We now return the statement that $G$ is a ring of length $k$, the ring partition $(X_1,\dots,X_k)$ of $G$, and for each $i \in \{1,\dots,k\}$, the ordering $X_i = \{u_i^1,\dots,u_i^{|X_i|}\}$ of $X_i$, and we stop. Clearly, the algorithm is correct, and its running time is $O(n^2)$. 
\end{proof} 

We remind the reader that a {\em simplicial vertex} is a vertex whose neighborhood is a (possibly empty) clique. A {\em simplicial elimination ordering} of a graph $G$ is an ordering $v_1,\dots,v_n$ of the vertices of $G$ such that for all $i \in \{1,\dots,n\}$, $v_i$ is simplicial in the graph $G[v_i,v_{i+1},\dots,v_n]$. It is well known (and easy to show) that a graph is chordal if and only if it has a simplicial elimination ordering (see~\cite{FulkersonGross}); in particular, every chordal graph contains a simplicial vertex. We also note that there is an $O(n+m)$ time algorithm that either produces a simplicial elimination ordering of the input graph, or determines that the graph is not chordal~\cite{RTL76}. Recall that a graph is {\em perfect} if all its induced subgraphs $H$ satisfy $\chi(H) = \omega(H)$; it is well known (and easy to show) that chordal graphs are perfect~\cite{Berge-German, D61}. 

The following algorithm is a minor modification of the algorithm described in the introduction of~\cite{HHMMSimplicial}.\footnote{The algorithm from~\cite{HHMMSimplicial} produces a maximal sequence $v_1,\dots,v_t$ ($t \geq 0$) of pairwise distinct vertices of the input graph $G$ such that for all $i \in \{1,\dots,t\}$, $v_i$ is simplicial in either $G \setminus \{v_1,\dots,v_{i-1}\}$ or $\overline{G} \setminus \{v_1,\dots,v_{i-1}\}$. Thus, the algorithm from Lemma~\ref{lemma-simplicial-list-alg} is in fact obtained from the algorithm from~\cite{HHMMSimplicial} by omitting some steps. The running time of the two algorithms is the same. For the sake of completeness, we give all the details for the algorithm that we need (i.e.\ for the algorithm from Lemma~\ref{lemma-simplicial-list-alg}).} 

\begin{lemma} \label{lemma-simplicial-list-alg} There exists an algorithm with the following specifications: 
\begin{itemize} 
\item Input: A graph $G$; 
\item Output: A maximal sequence $v_1,\dots,v_t$ ($t \geq 0$) of pairwise distinct vertices of $G$ such that for all $i \in \{1,\dots,t\}$, $v_i$ is simplicial in the graph $G \setminus \{v_1,\dots,v_{i-1}\}$;\footnote{If $t = 0$, then the sequence $v_1,\dots,v_t$ is empty and $G$ has no simplicial vertices.} 
\item Running time: $O(n^3)$. 
\end{itemize} 
\end{lemma}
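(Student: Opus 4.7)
The plan is to design a straightforward iterative algorithm that, at each step, locates and removes a simplicial vertex of the current induced subgraph, stopping when none exists. The main challenge is to achieve the claimed $O(n^3)$ total running time rather than the naive $O(n^4)$ that would result from recomputing simplicial status from scratch in each of the up to $n$ iterations.

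The key device is to maintain, for each vertex $v$ still present, a \emph{deficiency} counter $d(v)$ equal to the number of non-edges among unordered pairs of vertices in the current neighborhood $N(v)$; then $v$ is simplicial in the current induced subgraph precisely when $d(v) = 0$. First I would initialize $d$ by iterating over each vertex $v$ and each unordered pair of its neighbors in $G$, testing adjacency via the adjacency matrix; this takes $\sum_{v} \deg(v)^2 \leq n \sum_{v} \deg(v) = O(nm) = O(n^3)$ time.

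The main loop then scans the remaining vertices for one with $d(v) = 0$ (in $O(n)$ time); if none exists, the algorithm halts and returns the accumulated sequence. Otherwise a witness $w$ is appended to the output, marked as deleted, and the counters are updated. The crucial observation is that removing $w$ affects only $d(v)$ for $v \in N(w)$: for every other $v$, the set $N(v)$ and all edges among its members are unchanged. For each $v \in N(w)$, the update rule is to subtract from $d(v)$ the number of $x \in N(v) \setminus \{w\}$ with $xw \notin E$, which can be computed in $O(\deg(v))$ time by scanning $N(v)$. The total update cost per deletion is thus $\sum_{v \in N(w)} \deg(v) = O(n^2)$, and summing over at most $n$ deletions (and $n$ scans of cost $O(n)$) gives $O(n^3)$ for the main loop. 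Combined with the initialization, the overall running time is $O(n^3)$.

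Correctness is immediate from the invariant that $d(v)$ always equals the deficiency of $v$ in the current induced subgraph $G \setminus \{v_1,\dots,v_{i-1}\}$; this invariant is preserved at each step by the update rule described above. Maximality of the output sequence follows because the loop exits only when no remaining vertex has $d = 0$, i.e., no simplicial vertex remains. The main obstacle, beyond bookkeeping, is verifying that the update upon deletion touches only $O(n)$ deficiency counters at a total cost of $O(n^2)$; this reduces to the observation that non-neighbors of $w$ retain their neighborhoods and induced subgraphs on those neighborhoods intact when $w$ is removed. With this verified, the analysis is routine.
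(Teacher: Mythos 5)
Your proposal is correct and follows essentially the same approach as the paper: iteratively delete a simplicial vertex of the current induced subgraph, detected via incrementally maintained counters that are updated in $O(n^2)$ time per deletion, giving $O(n^3)$ overall. The only difference is the bookkeeping detail — you keep one per-vertex count of non-edges inside the current neighborhood, while the paper maintains pairwise quantities $\mathrm{diff}(x,y) = |N_G[x] \setminus N_G[y]|$ for adjacent pairs — and both certify simpliciality and admit the same cheap update, so the argument goes through as you describe.
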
 
\begin{proof}
\textbf{Step~0.} First, for all distinct $x,y \in V(G)$, we set 
\begin{displaymath} 
\begin{array}{ccc} 
\text{diff}(x,y) & = & \left\{\begin{array}{lll} 
|N_G[x] \setminus N_G[y]| & \text{if} & xy \in E(G) 
\\
\\
0 & \text{if} & xy \notin E(G) 
\end{array}\right. 
\end{array} 
\end{displaymath} 
Clearly, computing $\text{diff}(x,y)$ for all possible choices of distinct $x,y \in V(G)$ can be done in $O(n^3)$ time. We will update $\text{diff}(x,y)$ as the algorithm proceeds. Note that a vertex $x \in V(G)$ is simplicial in $G$ if and only if for all $y \in V(G) \setminus \{x\}$, we have that $\text{diff}(x,y) = 0$. Let $L$ be the empty list. We now go to Step~1. 

\textbf{Step~1.} We first check if there is a vertex $x \in V(G)$ such that for all $y \in V(G) \setminus \{x\}$, we have that $\text{diff}(x,y) = 0$; this can be done in $O(n^2)$ time. If we found no such vertex, then $G$ has no simplicial vertices; in this case, we return the list $L$ and stop. Suppose now that we found such a vertex $x$. First, we set $L := L,x$ (i.e.\ we update $L$ by adding $x$ to the end of $L$). If $x$ is the only vertex of $G$, then we return $L$ and stop. Suppose now that $G$ has at least two vertices. Then, for all distinct $x',y \in V(G) \setminus \{x\}$, we update $\text{diff}(x',y)$ as follows: if $x \in N_G[x'] \setminus N_G[y]$, then we set $\text{diff}(x',y) := \text{diff}(x',y)-1$, and otherwise, we do not change $\text{diff}(x',y)$; this update takes $O(n^2)$ time. Finally, we update $G$ by setting $G := G \setminus x$, and we go to Step~1 with input $G$, $L$, and $\text{diff}(x',y)$ for all distinct $x',y \in V(G)$. 

Clearly, the algorithm terminates and is correct. Step~0 takes $O(n^3)$ time. We make $O(n)$ calls to Step~1, and otherwise, the slowest step of Step~1 takes $O(n^2)$ time. Thus, the total running time of the algorithm is $O(n^3)$. 
\end{proof} 

Recall that chordal graphs are precisely those graphs that admit a simplicial elimination ordering~\cite{FulkersonGross}. So, the algorithm from Lemma~\ref{lemma-simplicial-list-alg} can be used to recognize chordal graphs in $O(n^3)$ time.\footnote{Indeed, suppose that, given an $n$-vertex input graph $G$, the algorithm from Lemma~\ref{lemma-simplicial-list-alg} returned the sequence $v_1,\dots,v_t$. If $t = n$ (i.e.\ $V(G) = \{v_1,\dots,v_t\}$), then $v_1,\dots,v_t$ is a simplicial elimination ordering of $G$, and therefore (by~\cite{FulkersonGross}) $G$ is chordal. Suppose now that $t < n$. Then the maximality of $v_1,\dots,v_t$ guarantees that $G \setminus \{v_1,\dots,v_t\}$ has no simplicial vertices. Then by~\cite{FulkersonGross}, $G \setminus \{v_1,\dots,v_t\}$ is not chordal, and consequently, $G$ is not chordal either.} 
 
Lemma~\ref{lemma-omega-ring} (below) follows immediately from Theorem~8.25 from~\cite{VIK}. 

\begin{lemma} \cite{VIK} \label{lemma-omega-ring} There exists an algorithm with the following specifications: 
\begin{itemize} 
\item Input: A graph $G$; 
\item Output: Either $\omega(G)$, or the true statement that $G \notin \mathcal{G}_{\text{T}}$; 
\item Running time: $O(n^3)$. 
\end{itemize} 
\end{lemma}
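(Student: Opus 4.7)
The plan is to leverage the decomposition theorem for $\mathcal{G}_{\text{T}}$ from \cite{VIK} (i.e.\ Theorem~\ref{thm-GT-decomp}, referenced later in the paper), which asserts that every graph in $\mathcal{G}_{\text{T}}$ either belongs to a short list of ``basic'' classes (chordal-like graphs and rings among them) or admits a clique-cutset. Since the clique number is invariant under the clique-cutset decomposition in the sense that $\omega(G) = \max_A \omega(A)$ where $A$ ranges over the atoms produced by the decomposition, it suffices to (i) decompose $G$ via clique cutsets into atoms, and (ii) compute $\omega$ on each atom, either certifying that the atom lies in a basic class of $\mathcal{G}_{\text{T}}$ or detecting a failure.

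Concretely, the first step would use a standard clique-cutset decomposition procedure (e.g.\ Tarjan's) to compute the atoms of $G$ in $O(nm)$ time, which is $O(n^3)$. For each atom $A$, one would then attempt to certify membership in the relevant basic classes: run the chordal-recognition step via Lemma~\ref{lemma-simplicial-list-alg}, and, failing that, run the ring-recognition algorithm from Lemma~\ref{lemma-detect-ring}; $\omega$ is immediate from a simplicial elimination ordering in the chordal case, and can be read off from the ring partition and the dominance ordering in Lemma~\ref{lemma-ring-char}(d) in the ring case. The other basic classes in the decomposition theorem of \cite{VIK} are similarly structured (rings plus a few small, highly constrained configurations) and admit direct $\omega$ computations in time well within the $O(n^3)$ budget. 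If any atom fails recognition against every basic class, the decomposition theorem entitles us to return the correct answer $G \notin \mathcal{G}_{\text{T}}$. Finally, we return $\max_A \omega(A)$.

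The main obstacle is twofold. First, one must verify that the cumulative cost over all atoms stays in $O(n^3)$: since the atoms of a clique-cutset decomposition have total size $O(n^2)$, and each of the recognition/$\omega$ subroutines on an atom runs in $O(|V(A)|^2)$ or $O(|V(A)| \cdot |E(A)|)$, a careful amortization is needed, but the bound is known to go through. Second, and more delicate, one must ensure that a negative result from the recognition steps genuinely certifies $G \notin \mathcal{G}_{\text{T}}$; this is exactly where the decomposition theorem of \cite{VIK} is essential, since it supplies the completeness statement that every atom of every $\mathcal{G}_{\text{T}}$-graph falls into one of the recognized basic classes.

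Since the full machinery (decomposition theorem, recognition of each basic class, and timing analysis) is precisely what is packaged in Theorem~8.25 of \cite{VIK}, my proposed proof is in essence to invoke that theorem as a black box; the lemma as stated is a straightforward repackaging of its conclusion for the specific task of outputting $\omega(G)$ or declaring non-membership.
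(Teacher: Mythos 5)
Your proposal is correct and ultimately takes the same route as the paper: the paper offers no independent proof of this lemma but simply notes that it follows immediately from Theorem~8.25 of~\cite{VIK}, which is exactly the black-box invocation you settle on in your final paragraph. The preliminary sketch of a clique-cutset/basic-class argument is not needed (and glosses over details such as the exact list of basic classes and the amortized timing), but since the operative step is the citation of Theorem~8.25, your proof matches the paper's.
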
 

By Lemma~\ref{lemma-ring-chordal}(d), rings belong to $\mathcal{G}_{\text{T}}$, and so Lemma~\ref{lemma-omega-ring} guarantees that the clique number of a ring can be computed in $O(n^3)$ time. 

\begin{lemma} \label{lemma-ring-simplicial} Let $k \geq 4$ be an integer. Then every induced subgraph of a $k$-ring either contains a simplicial vertex or is a $k$-ring. More precisely, let $R$ be a $k$-ring with ring partition $(X_1,\dots,X_k)$, and let $Y \subseteq V(R)$ be a nonempty set. Then either $R[Y]$  contains a simplicial vertex, or $R[Y]$ is a $k$-ring with ring partition $(X_1 \cap Y,\dots,X_k \cap Y)$. 
\end{lemma}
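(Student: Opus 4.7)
The plan is to split into two cases according to whether every $X_i \cap Y$ is nonempty.

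First, if $X_i \cap Y = \emptyset$ for some $i \in \{1,\dots,k\}$, then $R[Y]$ is an induced subgraph of $R \setminus X_i$, which is chordal by Lemma~\ref{lemma-ring-chordal}(c). Induced subgraphs of chordal graphs are chordal, so $R[Y]$ is chordal, and hence (being nonempty) has a simplicial vertex, as required.

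So assume $X_i \cap Y \neq \emptyset$ for every $i$. For each such $i$, let $u_i^{j_i}$ and $u_i^{t_i}$ be the vertices of $X_i \cap Y$ with the smallest and largest superscript in the ring ordering of $X_i$; I will call these the top and bottom of $X_i \cap Y$. I will verify that either the tuple $(X_1 \cap Y, \dots, X_k \cap Y)$ satisfies conditions (a)--(d) of Lemma~\ref{lemma-ring-char} (so that $R[Y]$ is a $k$-ring with the advertised partition), or $R[Y]$ has a simplicial vertex. Conditions (a), (b), and (d) of Lemma~\ref{lemma-ring-char} follow immediately by intersecting the corresponding $R$-properties with $Y$, using that $N_{R[Y]}[v] = N_R[v] \cap Y$ for every $v \in Y$. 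As for condition (c), every $u_i^\ell \in X_i \cap Y$ satisfies $\ell \geq j_i$ and hence $N_R[u_i^\ell] \subseteq N_R[u_i^{j_i}]$, so $u_i^{j_i}$ dominates every other vertex of $X_i \cap Y$ in $R$, and thus also in $R[Y]$. Therefore condition (c) at index $i$ is equivalent to $u_i^{j_i}$ being complete in $R[Y]$ to $(X_{i-1} \cap Y) \cup (X_{i+1} \cap Y)$.

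Now suppose condition (c) fails at some $i$. By the symmetry of the ring under reversal of its partition, we may assume $u_i^{j_i}$ has a non-neighbor $u_{i+1}^p \in X_{i+1} \cap Y$. I claim that $z := u_{i+1}^{t_{i+1}}$ is simplicial in $R[Y]$. Since $t_{i+1} \geq p$, the ring chain inside $X_{i+1}$ gives $N_R[z] \subseteq N_R[u_{i+1}^p]$, so $u_i^{j_i} \not\sim z$; the ring chain inside $X_i$ (used as above) then yields $u_i^\ell \not\sim z$ for every $u_i^\ell \in X_i \cap Y$. Consequently the neighborhood of $z$ in $R[Y]$ is contained in $((X_{i+1} \cap Y) \setminus \{z\}) \cup (N_R[z] \cap X_{i+2} \cap Y)$. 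Both pieces are cliques, being subsets of $X_{i+1}$ and $X_{i+2}$ respectively, so only cross edges need checking: any $v = u_{i+1}^q \in (X_{i+1} \cap Y) \setminus \{z\}$ satisfies $q < t_{i+1}$, so $N_R[v] \supseteq N_R[z]$, and hence $v$ is adjacent to every vertex of $N_R[z] \cap X_{i+2} \cap Y$. This shows the neighborhood of $z$ in $R[Y]$ is a clique, so $z$ is simplicial.

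The main obstacle is identifying the correct candidate simplicial vertex. The vertex $u_{i+1}^p$ that directly witnesses the failure of condition (c) is the natural first guess, but its neighbors in $X_{i+2} \cap Y$ might fail to be adjacent to some other vertex of $X_{i+1} \cap Y$, so $u_{i+1}^p$ need not be simplicial. Passing all the way down to the bottom $u_{i+1}^{t_{i+1}}$ of $X_{i+1} \cap Y$ is precisely what enables a second use of the ring's domination chain --- this time inside $X_{i+1}$ and in the opposite direction --- to force the cross-clique property that completes the argument.
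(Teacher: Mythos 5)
Your proof is correct and follows essentially the same route as the paper: handle the case of an empty $X_i \cap Y$ via chordality of $R \setminus X_i$ (Lemma~\ref{lemma-ring-chordal}(c)), and otherwise verify the conditions of Lemma~\ref{lemma-ring-char}, with the most-dominated (largest-superscript) vertex of the offending part serving as the simplicial vertex when condition (c) fails. Your candidate $u_{i+1}^{t_{i+1}}$ is exactly the paper's candidate (the paper's $u_i^{j_i}$ with maximal $j_i$, which is anticomplete to the previous part when completeness fails), and you merely spell out the simpliciality check that the paper leaves as ``easy to see.''
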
 
\begin{proof} 
For all $i \in \{1,\dots,k\}$, we set $X_i = \{u_i^1, \dots, u_i^{|X_i|}\}$ so that $X_i \subseteq N_R[u_i^{|X_i|}] \subseteq \dots \subseteq N_R[u_i^1] = X_{i-1} \cup X_i \cup X_{i+1}$, as in the definition of a ring. For all $i \in \{1,\dots,k\}$, we set $Y_i = X_i \cap Y$. If at least one of $Y_1,\dots,Y_k$ is empty, then Lemma~\ref{lemma-ring-chordal}(c) implies that $R[Y]$ is chordal, and consequently (by~\cite{FulkersonGross}), $R[Y]$ contains a simplicial vertex. So from now on, we assume that $Y_1,\dots,Y_k$ are all nonempty. 

For all $i \in \{1,\dots,k\}$, let $j_i \in \{1,\dots,|X_i|\}$ be maximal with the property that $u_i^{j_i} \in Y_i$; then $u_i^{j_i}$ is dominated in $R[Y]$ by all other vertices in $Y_i$. If for some $i \in \{1,\dots,k\}$, $u_i^{j_i}$ is anticomplete to $Y_{i-1}$ or $Y_{i+1}$, then it is easy to see that $u_i^{j_i}$ is a simplicial vertex of $R[Y]$, and we are done; otherwise, Lemma~\ref{lemma-ring-char} implies that $R[Y]$ is a ring with ring partition $(Y_1,\dots,Y_k)$. 
\end{proof} 

\begin{lemma} \label{lemma-RRk-hered} For all integers $k \geq 4$, both the following hold: 
\begin{itemize} 
\item the class $\mathcal{R}_k$ is hereditary and contains all $k$-rings; 
\item the class $\mathcal{R}_{\geq k}$ is hereditary and contains all rings of length at least $k$. 
\end{itemize} 
In particular, the class $\mathcal{R}_{\geq 4}$ is hereditary and contains all rings. 
\end{lemma}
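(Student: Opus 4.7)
The plan is to prove this directly from the definitions, with the only nontrivial ingredient being Lemma~\ref{lemma-ring-simplicial}. There are four things to verify, and I would do them in the order below.

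First I would handle $\mathcal{R}_k$. The hereditary property is immediate from the definition: if $G \in \mathcal{R}_k$ and $H$ is an induced subgraph of $G$, then every induced subgraph of $H$ is also an induced subgraph of $G$, hence is either a $k$-ring or contains a simplicial vertex; so $H \in \mathcal{R}_k$. For the claim that $\mathcal{R}_k$ contains every $k$-ring $R$, I would apply Lemma~\ref{lemma-ring-simplicial} to $R$: it tells us that every induced subgraph of $R$ is either a $k$-ring or contains a simplicial vertex, which is exactly the defining property of $\mathcal{R}_k$.

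Next I would turn to $\mathcal{R}_{\geq k} = \bigcup_{i \geq k} \mathcal{R}_i$. For the hereditary property, suppose $G \in \mathcal{R}_{\geq k}$; then $G \in \mathcal{R}_i$ for some $i \geq k$, and if $H$ is an induced subgraph of $G$, then by the first bullet $H \in \mathcal{R}_i \subseteq \mathcal{R}_{\geq k}$. For the claim that $\mathcal{R}_{\geq k}$ contains every ring $R$ of length $\ell \geq k$, I would observe that by the first bullet $R \in \mathcal{R}_\ell$, and $\mathcal{R}_\ell \subseteq \mathcal{R}_{\geq k}$ since $\ell \geq k$. The final ``in particular'' clause is then just the $k=4$ case.

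There is no real obstacle here: the whole argument is a direct unpacking of the definitions once Lemma~\ref{lemma-ring-simplicial} is in hand. The only subtle point worth flagging is that the hereditary property of $\mathcal{R}_{\geq k}$ is not automatic from the fact that it is a union of hereditary classes in general, but it does follow because each individual graph lies in one single $\mathcal{R}_i$, and that $\mathcal{R}_i$ is itself hereditary.
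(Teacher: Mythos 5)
Your proposal is correct and matches the paper's argument: the paper's proof consists of exactly the observation that the lemma follows immediately from Lemma~\ref{lemma-ring-simplicial} and the relevant definitions, which is precisely what you unpack. Your added remark about why the union $\mathcal{R}_{\geq k}$ remains hereditary is a fair point of care, but it is the same route, just written out in full.
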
 
\begin{proof} 
This follows immediately from Lemma~\ref{lemma-ring-simplicial} and from the relevant definitions.  
\end{proof} 

The following lemma (Lemma~\ref{lemma-RRk-description}) will not be used in the remainder of the paper, but the reader may find it informative. We remark that, for each integer $k \geq 4$, Lemmas~\ref{lemma-detect-ring},~\ref{lemma-simplicial-list-alg}, and~\ref{lemma-RRk-description} readily yield $O(n^3)$ time recognition algorithms for the classes $\mathcal{R}_k$ and $\mathcal{R}_{\geq k}$. However, we will not need these algorithms in the remainder of the paper, and so we leave the details to the reader. 

\begin{lemma} \label{lemma-RRk-description} Let $k \geq 4$ be an integer, and let $G$ be a graph. Then the following are equivalent: 
\begin{itemize} 
\item[(a)] $G \in \mathcal{R}_k$; 
\item[(b)] either $G$ is chordal, or $G$ is a $k$-ring, or $G$ can be obtained from a $k$-ring by repeatedly adding simplicial vertices. 
\end{itemize} 
\end{lemma}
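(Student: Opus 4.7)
The plan is to prove the two implications separately, with (b)$\Rightarrow$(a) handled by cases and (a)$\Rightarrow$(b) by induction on $|V(G)|$.

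For (b)$\Rightarrow$(a), I would argue as follows. If $G$ is chordal, then so is every induced subgraph, and chordal graphs contain simplicial vertices by the result of Fulkerson and Gross, giving $G\in\mathcal{R}_k$. If $G$ is a $k$-ring, then Lemma~\ref{lemma-ring-simplicial} immediately yields $G\in\mathcal{R}_k$, since every induced subgraph is either a $k$-ring or has a simplicial vertex. Finally, suppose $G$ is obtained from a $k$-ring $R$ by adding simplicial vertices $v_1,\dots,v_s$ in order, so that $v_i$ is simplicial in $R+v_1+\dots+v_i$ for each $i$. Consider an arbitrary induced subgraph $G[Y]$. If $Y\subseteq V(R)$, then $G[Y]=R[Y]$ and Lemma~\ref{lemma-ring-simplicial} applies directly. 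Otherwise, let $i$ be the maximal index with $v_i\in Y$; then $Y\subseteq V(R)\cup\{v_1,\dots,v_i\}$, so $G[Y]$ is an induced subgraph of $R+v_1+\dots+v_i$, and the neighborhood of $v_i$ in $G[Y]$ is an induced subgraph of the clique $N_{R+v_1+\dots+v_i}(v_i)$, hence itself a clique; thus $v_i$ is simplicial in $G[Y]$.

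For (a)$\Rightarrow$(b), I induct on $|V(G)|$. If $|V(G)|=1$, then $G$ is trivially chordal. So assume $|V(G)|\ge 2$ and that the claim holds for all graphs in $\mathcal{R}_k$ on fewer vertices. If $G$ is a $k$-ring, there is nothing to prove. Otherwise, since $G$ itself is an induced subgraph of $G$ and $G\in\mathcal{R}_k$, the definition of $\mathcal{R}_k$ forces $G$ to have a simplicial vertex $v$. By Lemma~\ref{lemma-RRk-hered}, $\mathcal{R}_k$ is hereditary, so $G\setminus v\in\mathcal{R}_k$, and the induction hypothesis gives three sub-cases. If $G\setminus v$ is chordal, then so is $G$: any hole in $G$ must pass through $v$, but then its two neighbors of $v$ on the hole would be non-adjacent in $G$, contradicting that $N_G(v)$ is a clique. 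If $G\setminus v$ is a $k$-ring, then $G$ is obtained from a $k$-ring by adding a single simplicial vertex. If $G\setminus v$ is obtained from a $k$-ring $R'$ by adding simplicial vertices in some order, then appending $v$ to that sequence shows that $G$ arises the same way.

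There is no real obstacle here: the proof reduces to Lemma~\ref{lemma-ring-simplicial}, the hereditariness given by Lemma~\ref{lemma-RRk-hered}, and routine bookkeeping for the sequential simplicial-addition construction. The only subtle point is that a simplicial vertex remains simplicial in any induced subgraph that still contains it, which is precisely what makes the ``maximal index $i$'' argument work in the forward direction.
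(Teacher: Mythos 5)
Your proof is correct and takes essentially the same route as the paper: peeling off simplicial vertices until a $k$-ring (or the empty graph, giving chordality) remains for (a)$\Rightarrow$(b), and combining Lemma~\ref{lemma-ring-simplicial} with the ``last-added simplicial vertex stays simplicial'' observation for (b)$\Rightarrow$(a). The only differences are cosmetic: the paper extracts a maximal simplicial sequence in one step rather than inducting vertex-by-vertex, and it states the (b)$\Rightarrow$(a) direction more tersely than your explicit maximal-index argument.
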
 
\begin{proof}
Suppose first that (a) holds, i.e.\ that $G \in \mathcal{R}_k$. Let $v_1,\dots,v_t$ ($t \geq 0$) be a maximal sequence of pairwise distinct vertices of $G$ such that for all $i \in \{1,\dots,t\}$, $v_i$ is simplicial in $G \setminus \{v_1,\dots,v_{i-1}\}$. If $V(G) = \{v_1,\dots,v_t\}$, then $v_1,\dots,v_t$ is a simplicial elimination ordering of $G$, and so (by~\cite{FulkersonGross}) $G$ is chordal. Suppose now that $\{v_1,\dots,v_t\} \subsetneqq V(G)$. Set $R = G \setminus \{v_1,\dots,v_t\}$. Since $G \in \mathcal{R}_k$, and since $\mathcal{R}_k$ is hereditary, we see that $R \in \mathcal{R}_k$. On the other hand, by the maximality of $v_1,\dots,v_t$, we know that $R$ has no simplicial vertices. So, by the definition of $\mathcal{R}_k$, $R$ is a $k$-ring. If $t = 0$, then $G = R$, and we have that $G$ is a $k$-ring. On the other hand, if $t \geq 1$, then $G$ can be obtained from the $k$-ring $R$ by adding simplicial vertices $v_t,\dots,v_1$ (in that order). So, (b) holds. 

Suppose now that (b) holds. Clearly, every induced subgraph of a chordal graph is chordal. Furthermore, by~\cite{FulkersonGross}, every chordal graph has a simplicial vertex. So, if $G$ is chordal, then all its induced subgraphs contain a simplicial vertex, and it follows that $G \in \mathcal{R}_k$. Suppose now that $G$ can be obtained from a $k$-ring by (possibly) repeatedly adding simplicial vertices. But then Lemma~\ref{lemma-ring-simplicial} implies that every induced subgraph of $G$ either is a $k$-ring or has a simplicial vertex, and so $G \in \mathcal{R}_k$. Thus, (a) holds. 
\end{proof} 

\begin{lemma} \label{lemma-simplicial-chi} Let $G$ be a graph on at least two vertices, and let $v$ be a simplicial vertex of $G$. Then $\omega(G) = \max\{|N_G[v]|,\omega(G \setminus v)\}$ and $\chi(G) = \max\{\omega(G),\chi(G \setminus v)\}$. 
\end{lemma}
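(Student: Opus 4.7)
The plan is to prove the two identities in turn, with the second building on the first. Both are essentially standard observations about simplicial vertices, so I expect no genuine obstacle; the only thing to be careful about is extending a coloring of $G\setminus v$ to $v$.

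\textbf{Step 1: the formula for $\omega(G)$.} Since $v$ is simplicial, $N_G[v]$ is a clique, so $\omega(G) \geq |N_G[v]|$; and trivially $\omega(G) \geq \omega(G\setminus v)$, giving the $\geq$ direction. Conversely, let $K$ be any clique of $G$. If $v \notin K$, then $K \subseteq V(G\setminus v)$ and $|K| \leq \omega(G\setminus v)$; if $v \in K$, then $K \subseteq N_G[v]$ and $|K| \leq |N_G[v]|$. Taking $K$ to be a maximum clique yields $\omega(G) \leq \max\{|N_G[v]|,\omega(G\setminus v)\}$.

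\textbf{Step 2: the lower bound $\chi(G) \geq \max\{\omega(G),\chi(G\setminus v)\}$.} The inequality $\chi(G) \geq \omega(G)$ holds for every graph, and $\chi(G) \geq \chi(G\setminus v)$ because the restriction of any proper coloring of $G$ to $V(G)\setminus\{v\}$ is a proper coloring of $G\setminus v$.

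\textbf{Step 3: the upper bound $\chi(G) \leq \max\{\omega(G),\chi(G\setminus v)\}$.} Let $M = \max\{\omega(G),\chi(G\setminus v)\}$, and fix a proper coloring $c$ of $G\setminus v$ using colors from $\{1,\dots,\chi(G\setminus v)\} \subseteq \{1,\dots,M\}$. Since $v$ is simplicial, $N_G[v]$ is a clique, so $|N_G[v]| \leq \omega(G) \leq M$ and hence $|N_G(v)| \leq M-1$. Therefore at most $M-1$ of the colors in $\{1,\dots,M\}$ appear on $N_G(v)$ under $c$, so at least one color $c^* \in \{1,\dots,M\}$ is unused on $N_G(v)$. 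Setting $c(v) = c^*$ produces a proper coloring of $G$ with at most $M$ colors, as required.

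Combining Steps 2 and 3 gives $\chi(G) = \max\{\omega(G),\chi(G\setminus v)\}$, completing the proof. The only mild subtlety is to ensure the palette used when extending the coloring is large enough to accommodate $v$; this is precisely why $\omega(G)$ (rather than just $\chi(G\setminus v)$) appears in the maximum, since when $\chi(G\setminus v) < |N_G[v]|$ the clique $N_G[v]$ alone forces $\chi(G) \geq |N_G[v]|$ colors.
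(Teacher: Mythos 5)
Your proposal is correct and follows essentially the same argument as the paper: the clique-number identity by splitting on whether a maximum clique contains $v$, and the chromatic-number identity by extending a proper coloring of $G \setminus v$ to $v$ using the bound $|N_G(v)| \leq \omega(G)-1$. No gaps.
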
 
\begin{proof} 
We first show that $\omega(G) = \max\{|N_G[v]|,\omega(G \setminus v)\}$. Since $v$ is simplicial, $N_G[v]$ is a clique, and we deduce that $\max\{|N_G[v]|,\omega(G \setminus v)\} \leq \omega(G)$. To prove the reverse inequality, let $K$ be a clique of size $\omega(G)$ in $G$. If $v \notin K$, then $K$ is a clique of $G \setminus v$, and so $\omega(G) = |K| \leq \omega(G \setminus v) \leq \max\{|N_G[v]|,\omega(G \setminus v)\}$. So suppose that $v \in K$. Since $K$ is a clique, it follows that $K \subseteq N_G[v]$, and so $\omega(G) = |K| \leq |N_G[v]| \leq \max\{|N_G[v]|,\omega(G \setminus v)\}$. This proves that $\omega(G) = \max\{|N_G[v]|,\omega(G \setminus v)\}$. 

It remains to show that $\chi(G) = \max\{\omega(G),\chi(G \setminus v)\}$. It is clear that $\max\{\omega(G),\chi(G \setminus v)\} \leq \chi(G)$. For the reverse inequality, we set $\ell = \max\{\omega(G),\chi(G \setminus v)\}$, and we construct a proper coloring of $G$ that uses at most $\ell$ colors. First, we properly color $G \setminus v$ with colors $1,\dots,\ell$. Next, since $N_G[v]$ is a clique, we see that $|N_G(v)| = |N_G[v]|-1 \leq \omega(G)-1 \leq \ell-1$; thus, at least one of our $\ell$ colors was not used on $N_G(v)$, and we can assign this ``unused'' color to $v$. This produces a proper coloring of $G$ that uses at most $\ell$ colors, and we are done. 
\end{proof} 

We complete this section by stating the decomposition theorem for the class $\mathcal{G}_{\text{T}}$ proven in~\cite{VIK} (this is Theorem~1.8 from~\cite{VIK}). 

\begin{theorem} \label{thm-GT-decomp} \cite{VIK} Let $G \in \mathcal{G}_{\text{T}}$. Then one of the following holds: 
\begin{itemize} 
\item $G$ is a complete graph, a ring, or a 7-hyperantihole; 
\item $G$ admits a clique-cutset. 
\end{itemize} 
\end{theorem}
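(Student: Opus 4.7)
The plan is to argue by contradiction: assume $G\in\mathcal{G}_{\text{T}}$ admits no clique-cutset, and show that $G$ must be complete, a ring, or a $7$-hyperantihole. The chordal case is easy to dispose of: by a classical theorem of Dirac, a non-complete chordal graph has two non-adjacent simplicial vertices, and any minimal separator in a chordal graph is a clique; hence $G$ is complete whenever it is chordal and has no clique-cutset.

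Assume now that $G$ contains a hole, and fix such a hole $H=h_1h_2\cdots h_kh_1$, chosen to be as long as possible. The engine of the argument is a careful classification of how a vertex $v\in V(G)\setminus V(H)$ can attach to $H$. Because $G$ contains no proper or universal wheels, the only wheel centered at $v$ over $H$ is a twin wheel; hence $|N_G(v)\cap V(H)|\le 2$ unless $N_G(v)\cap V(H)$ is a set of three consecutive hole-vertices. Next, invoking 3PC-freeness and using connecting paths through $V(G)\setminus V(H)$, I would rule out attachments consisting of two non-adjacent hole-vertices (else a theta or prism appears along the connecting path), and the various cross-vertex patterns on pairs of vertices outside $H$ that would force a pyramid or a proper wheel. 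The outcome is that each $v\in V(G)\setminus V(H)$ attaches to $H$ in one of the patterns $\emptyset$, $\{h_i\}$, $\{h_{i-1},h_i\}$, or $\{h_{i-1},h_i,h_{i+1}\}$.

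Having classified attachments, I would group vertices into candidate sets $X_1,\dots,X_k$, where $X_i$ contains $h_i$ together with those vertices outside $H$ whose attachment is ``centered at $h_i$''. Using the no-clique-cutset hypothesis repeatedly, one would verify the ring-partition conditions of Lemma~\ref{lemma-ring-char}: each $X_i$ is a clique (a non-edge inside $X_i$ yields either a proper wheel or a separating clique); $X_i$ is anticomplete to $X_j$ for $|i-j|>1$ (a stray edge extends along a shortest connecting path to a 3PC); some vertex of $X_i$ is complete to $X_{i-1}\cup X_{i+1}$ (otherwise a subclique of $N_G(h_i)$ separates); and distinct vertices of $X_i$ are comparable by domination (incomparability again yields either a proper wheel or a clique-cutset). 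The conclusion, via Lemma~\ref{lemma-ring-char}, is that $G$ is a $k$-ring.

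The main obstacle, and the reason the $7$-hyperantihole arises as a separate basic class, is that the above attachment classification can fail in a ``dual'' scenario, where unavoidable cross-edges between non-consecutive $X_i$'s make the ring conclusion impossible. In that case I expect one can instead fit $V(G)$ into an antihole-type partition $Y_1,\dots,Y_k$ with $Y_i$ a clique, $Y_i$ anticomplete to $Y_{i\pm 1}$, and $Y_i$ complete to the rest. A parity-plus-length argument driven by the forbidden Truemper configurations should then pin $k$ down to exactly $7$: shorter odd hyperantiholes coincide with rings or contain $3$PCs, even hyperantiholes contain prisms, and longer ones embed proper or universal wheels on suitably chosen vertex triples. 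Executing this exceptional analysis, showing that the two candidate structures (ring versus hyperantihole) are dichotomous under the no-clique-cutset hypothesis, and verifying that no further sporadic configuration escapes both patterns, is where the bulk of the technical effort will lie and is the step I expect to be hardest.
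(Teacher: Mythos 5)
This theorem is not proved in the paper at all: it is quoted verbatim from [VIK] (Theorem~1.8 there, ``Clique-cutsets beyond chordal graphs''), where its proof is a long structural analysis. So the relevant question is whether your sketch would itself constitute a proof, and it does not. The chordal/complete reduction and the initial attachment classification over a hole $H$ (at most two neighbors, or three consecutive ones, on $H$, since proper and universal wheels are excluded and two non-adjacent neighbors give a theta) are fine, but everything after that is asserted rather than argued. Concretely: (i) vertices with no neighbors on $H$ are admitted into your classification (the pattern $\emptyset$) but never placed into the candidate sets $X_1,\dots,X_k$, and handling such vertices -- via components of $G\setminus N[V(H)]$, their attachments, and the extraction of clique-cutsets or longer holes -- is a substantial portion of any proof of this kind; (ii) the claims that a non-edge inside $X_i$, an edge between non-consecutive $X_i$'s, or an incomparable pair in $X_i$ ``yields either a forbidden Truemper configuration or a clique-cutset'' are precisely the crux, and producing a clique-cutset requires showing a clique genuinely disconnects $G$, i.e.\ controlling \emph{all} paths avoiding it, which cannot be read off locally from the hole; (iii) the entire exceptional branch is phrased as expectation (``I expect one can instead fit $V(G)$ into an antihole-type partition'', ``should then pin $k$ down to exactly $7$''), with no argument for when the ring construction fails, why that failure forces the hyperantihole structure, or why the length is exactly $7$ (note, e.g., that $5$-hyperantiholes are themselves hyperholes, hence rings, so the dichotomy is delicate); and (iv) no verification is offered that the two outcomes exhaust all cases.

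In short, what you have is a plausible plan whose hardest steps -- exactly the ones acknowledged in your last paragraph -- are left unexecuted, so it cannot be accepted as a proof of Theorem~\ref{thm-GT-decomp}. For this paper the correct treatment is simply to cite [VIK]; if you want to reconstruct the argument, you should expect a case analysis on the same scale as the one in that reference rather than a refinement of the sketch above.
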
 

Finally, we remark that graphs in $\mathcal{G}_{\text{T}}$ can be recognized in $O(n^3)$ time (see Theorem~8.23 from~\cite{VIK}), but we do not need this result in the remainder of the paper.

\section{Proof of Theorem~\ref{thm-ring-hyperhole}} \label{sec:main}

In this section, we prove Theorem~\ref{thm-ring-hyperhole}. We begin with an easy lemma. 

\begin{lemma} \label{lemma-ring-large-omega} Let $R$ be a $k$-ring (with $k \geq 4$) such that $\chi(R) = \omega(R)$. Then $R$ contains a $k$-hyperhole $H$ such that $\chi(H) = \chi(R)$. 
\end{lemma}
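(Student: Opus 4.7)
The plan is to exhibit an explicit $k$-hyperhole $H \subseteq R$ whose clique number already equals $\omega(R)$, and then use Lemma~\ref{lemma-hyperhole-chi-formula} together with the hypothesis $\chi(R) = \omega(R)$ to conclude.

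Fix a ring partition $(X_1,\dots,X_k)$ of $R$ and, for each $i$, the ordering $X_i = \{u_i^1,\dots,u_i^{|X_i|}\}$ from the definition of a ring, so that $u_i^1$ is complete to $X_{i-1}\cup X_{i+1}$. The first step is a structural observation about cliques in a ring. By Lemma~\ref{lemma-ring-char}(b), $X_i$ is anticomplete to $V(R)\setminus(X_{i-1}\cup X_i\cup X_{i+1})$, so an easy index check (using $k\ge 4$) shows that every clique of $R$ is contained in $X_i\cup X_{i+1}$ for some $i$. In particular, let $K$ be a clique of $R$ with $|K|=\omega(R)$, and pick $i$ with $K\subseteq X_i\cup X_{i+1}$; then both $K\cap X_i$ and $K\cap X_{i+1}$ must be nonempty, for otherwise $K$ would be contained entirely in one clique $X_j$, and adding $u_{j-1}^1$ (which is complete to $X_j$) would yield a strictly larger clique in $R$, contradicting maximality of $K$. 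Relabel so that $i=1$; write $K_1 = K\cap X_1$ and $K_2 = K\cap X_2$.

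Now I would build the desired hyperhole by ``completing $K$ around the ring'' via the dominant vertices $u_j^1$. Define
\[
V(H) \;=\; K_1 \,\cup\, K_2 \,\cup\, \{u_3^1, u_4^1, \dots, u_k^1\},
\]
and put $H = R[V(H)]$, with the natural partition $Y_1=K_1$, $Y_2=K_2$, $Y_j=\{u_j^1\}$ for $3\le j\le k$. I would check that $H$ is a $k$-hyperhole: each $Y_j$ is a nonempty clique; $Y_1$ is complete to $Y_2$ because $K_1\cup K_2=K$ is a clique; for each $j\in\{3,\dots,k\}$ the vertex $u_j^1$ is complete to $X_{j-1}\cup X_{j+1}\supseteq Y_{j-1}\cup Y_{j+1}$ (indices mod $k$, so $u_k^1$ covers $Y_1 = K_1$); and the anticompleteness of $Y_j$ to $V(H)\setminus(Y_{j-1}\cup Y_j\cup Y_{j+1})$ is inherited from Lemma~\ref{lemma-ring-char}(b) applied to $R$.

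Finally, to compute $\chi(H)$, observe that $K\subseteq V(H)$, so $\omega(H)\ge|K|=\omega(R)$; conversely $\omega(H)\le\omega(R)$ since $H$ is an induced subgraph of $R$, hence $\omega(H)=\omega(R)$. Lemma~\ref{lemma-hyperhole-chi-formula} gives $\chi(H)\ge\omega(H)=\omega(R)$, while $\chi(H)\le\chi(R)$ because $H\subseteq R$. Combined with the hypothesis $\chi(R)=\omega(R)$, this forces $\chi(H)=\chi(R)$, as required. There is no serious obstacle here; the only point that needs care is the verification that a maximum clique genuinely meets two consecutive $X_i$'s, which is where the dominant vertices $u_j^1$ and the hypothesis $k\ge 4$ are used.
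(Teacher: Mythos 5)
Your proposal is correct and follows essentially the same route as the paper: take a maximum clique, observe it lies in $X_i\cup X_{i+1}$ and meets both parts (the paper notes that maximality forces $u_1^1,u_2^1\in Q$, while you argue by contradiction via a dominant vertex of a neighboring part), extend it around the ring by the vertices $u_j^1$ to form a $k$-hyperhole, and conclude from $\omega(R)\le\omega(H)\le\chi(H)\le\chi(R)=\omega(R)$. The appeal to Lemma~\ref{lemma-hyperhole-chi-formula} is not even needed, since $\chi(H)\ge\omega(H)$ holds trivially.
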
 
\begin{proof} 
Let $(X_1,\dots,X_k)$ be a ring partition of $R$, and for all $i \in \{1,\dots,k\}$, let $X_i = \{u_i^1,\dots,u_i^{|X_i|}\}$ be an ordering of $X_i$ such that $X_i \subseteq N_R[u_i^{|X_i|}] \subseteq \dots \subseteq N_R[u_i^1] = X_{i-1} \cup X_i \cup X_{i+1}$, as in the definition of a ring. Let $Q$ be a clique of size $\omega(R)$ in $R$. By the definition of a ring, and by symmetry, we may assume that $Q \subseteq X_1 \cup X_2$. Since $u_1^1$ is complete to $X_2$, and since $u_2^1$ is complete to $X_1$, the maximality of $Q$ guarantees that $u_1^1,u_2^1 \in Q$, and in particular, $Q$ intersects both $X_1$ and $X_2$. Set $H = R[Q \cup \{u_3^1,u_4^1,\dots,u_k^1\}]$. Clearly, $H$ is a $k$-hyperhole. Furthermore, we have that $\omega(R) = |Q| \leq \omega(H) \leq \chi(H) \leq \chi(R)$; since $\chi(R) = \omega(R)$, it follows that $\chi(H) = \chi(R)$. 
\end{proof} 

In view of Lemma~\ref{lemma-ring-large-omega}, our next lemma (Lemma~\ref{lemma-even-ring-col-alg}) shows that Theorem~\ref{thm-ring-hyperhole} holds for even rings. We will also rely on Lemma~\ref{lemma-even-ring-col-alg} in our coloring algorithm for rings in section~\ref{sec:col}. 

\begin{lemma} \label{lemma-even-ring-col-alg} Even rings are perfect.\footnote{We remind the reader that a graph is {\em perfect} if all its induced subgraphs $H$ satisfy $\chi(H) = \omega(H)$. In particular, every perfect graph $G$ satisfies $\chi(G) = \omega(G)$. The fact that even rings are perfect easily follows from the Strong Perfect Graph Theorem~\cite{SPGT}. However, here we give an elementary proof of this fact.} Furthermore, there exists an algorithm with the following specifications: 
\begin{itemize} 
\item Input: A graph $G$; 
\item Output: Either an optimal coloring of $G$, or the true statement that $G$ is not an even ring; 
\item Running time: $O(n^3)$. 
\end{itemize} 
\end{lemma}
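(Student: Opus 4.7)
First I would prove that even rings are perfect, then leverage this for the algorithm. For perfectness I invoke the Strong Perfect Graph Theorem~\cite{SPGT}: it suffices to rule out odd holes and odd antiholes in $R$. Odd holes are excluded by Lemma~\ref{lemma-ring-chordal}(b), since every hole of $R$ has length $k$, which is even. For odd antiholes $\overline{C_{2t+1}}$, the case $t=2$ is $\overline{C_5}=C_5$, already excluded as an odd hole. For $t \geq 3$, suppose for contradiction that $S \subseteq V(R)$ induces $\overline{C_{2t+1}}$. Using that $k$ is even, define the partition $V_{\mathrm{o}} := X_1 \cup X_3 \cup \cdots \cup X_{k-1}$ and $V_{\mathrm{e}} := X_2 \cup X_4 \cup \cdots \cup X_k$; within each of these, the constituent cliques are pairwise anticomplete in $R$. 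Consequently $R[S \cap V_{\mathrm{o}}]$ is a disjoint union of cliques, so its complement within $\overline{R[S]} = C_{2t+1}$, namely $C_{2t+1}[S \cap V_{\mathrm{o}}]$, is a complete multipartite graph. But induced subgraphs of $C_{2t+1}$ are disjoint unions of paths, and one checks that the only disjoint unions of paths that are complete multipartite are $P_1$, $P_2$, $P_3$, and the edgeless graphs $\overline{K_n}$ (otherwise, either a path of length $\geq 4$ is not complete multipartite, or two components with an internal edge yield a part-membership contradiction). Hence either $|S \cap V_{\mathrm{o}}| \leq 3$, or $S \cap V_{\mathrm{o}}$ is an independent set of $C_{2t+1}$ of size at most $\alpha(C_{2t+1}) = t$; in both subcases $|S \cap V_{\mathrm{o}}| \leq t$. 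By symmetry $|S \cap V_{\mathrm{e}}| \leq t$, so $|S| \leq 2t$, contradicting $|S| = 2t + 1$. Hence $R$ is perfect.

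For the algorithm, first apply Lemma~\ref{lemma-detect-ring-ord} in $O(n^2)$ time to test whether $G$ is a ring and, if so, to obtain the ring partition $(X_1,\dots,X_k)$ together with the nested orderings $u_i^1,\dots,u_i^{|X_i|}$ of each $X_i$; if $G$ is not a ring, or if $k$ is odd, return ``not an even ring.'' Otherwise compute $\omega(G)$ in $O(n^3)$ time via Lemma~\ref{lemma-omega-ring}, noting that $\chi(G) = \omega(G)$ by perfectness. To build an explicit $\omega(G)$-coloring, I would (i)~color the chordal graph $G \setminus X_k$ (chordal by Lemma~\ref{lemma-ring-chordal}(c)) using a perfect elimination ordering that peels simplicial vertices starting simultaneously from $X_1$ and $X_{k-1}$ and working toward the middle cliques $X_2,\dots,X_{k-2}$, producing a greedy coloring with at most $\omega(G \setminus X_k) \leq \omega(G)$ colors; and (ii)~extend the coloring to $X_k$ by processing $u_k^{|X_k|}, u_k^{|X_k|-1}, \dots, u_k^1$ in order, assigning each vertex the smallest color in $\{1,\dots,\omega(G)\}$ not used by a previously colored neighbor.

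The main obstacle will be verifying that the greedy extension in step~(ii) never exhausts the palette $\{1,\dots,\omega(G)\}$. The key input is the clique bound $a + |N_G(u_k^a) \cap X_{k-1}| \leq \omega(G)$ (and its analogue for $X_1$), which holds because $\{u_k^1,\dots,u_k^a\} \cup (N_G(u_k^a) \cap X_{k-1})$ is a clique of $R$. Combined with the fact that $X_{k-1}$ and $X_1$ are anticomplete in $R$---so the PEO of step~(i) can be arranged so that $X_{k-1}$ and $X_1$ draw from heavily overlapping color palettes---this ensures that when coloring $u_k^a$ the number of distinct forbidden colors is at most $\omega(G)-1$, leaving at least one available color. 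The total running time is $O(n^3)$, dominated by the clique-number computation.
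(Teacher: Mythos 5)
Your first half (perfectness via the Strong Perfect Graph Theorem) is essentially correct: odd holes are excluded because every hole of an even ring has even length $k$ by Lemma~\ref{lemma-ring-chordal}(b), and your exclusion of odd antiholes via the partition into odd- and even-indexed cliques (so that the trace of the antihole on each side is the complement of a disjoint union of cliques, hence complete multipartite, hence a stable set or a tiny path inside the cycle) works, up to the harmless boundary case where the antihole meets only one side. Note, though, that the paper deliberately avoids SPGT: it first produces an explicit $\omega$-coloring of every even ring and then gets perfectness by induction from Lemmas~\ref{lemma-ring-simplicial} and~\ref{lemma-simplicial-chi}, so your route is heavier machinery but legitimate.

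The genuine gap is in the algorithm, and it sits exactly where the content of the lemma lies: you never prove that the greedy extension to $X_k$ in step~(ii) has a free color in $\{1,\dots,\omega(G)\}$. This cannot be true for an arbitrary proper coloring of the chordal graph $G \setminus X_k$: if, say, $|X_1|=|X_{k-1}|=|X_k|=2$ and $\omega(G)=4$, a coloring giving $X_1$ the colors $\{3,4\}$ and $X_{k-1}$ the colors $\{1,2\}$ leaves no color for a vertex of $X_k$ complete to $X_1 \cup X_{k-1}$. So everything hinges on your claim that the PEO of step~(i) ``can be arranged'' so that $X_1$ and $X_{k-1}$ get ``heavily overlapping palettes,'' and this is asserted, not proved: you specify neither the ordering nor the invariant the step-(i) coloring must satisfy, nor why greedy along that ordering yields it. Moreover, palette overlap as a set condition is not the right invariant in a general (non-hyperhole) ring: when you color $u_k^a$, its already-colored neighbors outside $X_k$ are only the first few vertices of $X_1$ and $X_{k-1}$ in the domination order, so what you need is that the colors are aligned with that order (low-indexed vertices carry low colors), and coordinated with the colors you are simultaneously assigning inside $X_k$. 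That alignment is precisely what the paper engineers by writing the coloring down explicitly --- $u_i^j$ gets color $j$ on odd-indexed cliques and color $\omega(G)-j+1$ on even-indexed cliques --- after which properness is a two-line consequence of the clique bound $j+\ell \leq \omega(G)$ for adjacent $u_i^j, u_{i+1}^\ell$ (the bound you state but do not exploit to completion). As written, your step~(ii) is an unproved claim rather than a proof, so the coloring algorithm, and with it the ``elementary'' half of the lemma, is not established.
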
 
\begin{proof} 
We begin by constructing the algorithm. We first call the algorithm from Lemma~\ref{lemma-detect-ring-ord} with input $G$; this takes $O(n^2)$ time. If the algorithm returns the answer that $G$ is not a ring, then we return the answer that $G$ is not an even ring, and we stop. So from now on, we assume that the algorithm returned all the following: 
\begin{itemize} 
\item the true statement that $G$ is a ring; 
\item the length $k$ and a ring partition $(X_1,\dots,X_k)$ of the ring $G$; 
\item for each $i \in \{1,\dots,k\}$, an ordering $X_i = \{u_i^1,\dots,u_i^{|X_i|}\}$ of $X_i$ such that $X_i \subseteq N_G[u_i^{|X_i|}] \subseteq \dots \subseteq N_G[u_i^1] = X_{i-1} \cup X_i \cup X_{i+1}$. 
\end{itemize} 
If $k$ is odd, then we return the answer that $G$ is not an even ring, and we stop. So assume that $k$ is even. Since $G$ is a ring, Lemma~\ref{lemma-ring-chordal}(d) guarantees that $G \in \mathcal{G}_{\text{T}}$, and so we can compute $\omega(G)$ by running the algorithm from Lemma~\ref{lemma-omega-ring} with input $G$; this takes $O(n^3)$ time. We now color $G$ as follows. For all odd $i \in \{1,\dots,k\}$ and all $j \in \{1,\dots,|X_i|\}$, we assign color $j$ to the vertex $u_i^j$; and for all even $i \in \{1,\dots,k\}$ and all $j \in \{1,\dots,|X_i|\}$, we assign color $\omega(G)-j+1$ to the vertex $u_i^j$. Since $|X_i| \leq \omega(G)$ for all $i \in \{1,\dots,k\}$, we see that our coloring uses only colors $1,\dots,\omega(G)$. Let us show that the coloring is proper. Suppose otherwise. By Lemma~\ref{lemma-ring-char}(b), there exist some $i \in \{1,\dots,k\}$, $j \in \{1,\dots,|X_i|\}$, and $\ell \in \{1,\dots,|X_{i+1}|\}$ such that $u_i^j$ and $u_{i+1}^{\ell}$ are adjacent in $G$ and were assigned the same color. Since $u_i^j$ and $u_{i+1}^{\ell}$ are adjacent, we see that $\{u_i^1,\dots,u_i^j\}$ and $\{u_{i+1}^1,\dots,u_{i+1}^{\ell}\}$ are cliques, complete to each other;\footnote{This follows from the properties of our orderings of $X_i$ and $X_{i+1}$.} thus, $\{u_i^1,\dots,u_i^j\} \cup \{u_{i+1}^1,\dots,u_{i+1}^{\ell}\}$ is a clique, and consequently, $j+\ell \leq \omega(G)$. On the other hand, by construction, we have that: 
\begin{itemize} 
\item if $i$ is odd, then $u_i^j$ received color $j$, and $u_{i+1}^{\ell}$ received color $\omega(G)-\ell+1$; 
\item if $i$ is even, then $u_i^j$ received color $\omega(G)-j+1$, and $u_{i+1}^{\ell}$ received color $\ell$. 
\end{itemize} 
Since vertices $u_i^j$ and $u_{i+1}^{\ell}$ received the same color, it follows that either $j = \omega(G)-\ell+1$ or $\omega(G)-j+1 = \ell$; in either case, we get that $j+\ell = \omega(G)+1$, contrary to the fact that $j+\ell \leq \omega(G)$. This proves that our coloring of $G$ is indeed proper. Furthermore, as pointed out above, this coloring uses at most $\omega(G)$ colors. Since $\omega(G) \leq \chi(G)$, we deduce that our coloring is optimal, and that $\chi(G) = \omega(G)$. We now return this coloring of $G$, and we stop. 

Clearly, the algorithm is correct, and its running time is $O(n^3)$. Note, furthermore, that we have established that all even rings $R$ satisfy $\chi(R) = \omega(R)$. The fact that even rings are perfect now follows from Lemmas~\ref{lemma-ring-simplicial} and~\ref{lemma-simplicial-chi} by an easy induction. 
\end{proof} 

As we pointed out above, Lemmas~\ref{lemma-ring-large-omega} and~\ref{lemma-even-ring-col-alg} together imply that even rings satisfy Theorem~\ref{thm-ring-hyperhole}. We devote the remainder of the section to proving Theorem~\ref{thm-ring-hyperhole} for odd rings. 

Given a graph $G$, a coloring $c$ of $G$, and distinct colors $a,b$ used by $c$, we set $T^{a,b}_{G,c} = G[\{x \in V(G) \mid \text{$c(x) = a$ or $c(x) = b$\}}]$;\footnote{Thus, $T^{a,b}_{G,c}$ is the subgraph of $G$ induced by the vertices colored $a$ or $b$.} note that if $c$ is a proper coloring of $G$, then $T^{a,b}_{G,c}$ is a bipartite graph, and if, in addition, $G$ contains no even holes, then $T^{a,b}_{G,c}$ is a forest. After introducing a few more definitions, we describe the structure of the components $Q$ of $T^{a,b}_{G,c}$ when $G$ is an induced subgraph of an odd ring (see Lemma~\ref{lemma-Rab}). 

We now need a few more definitions. Let $k \geq 5$ be an odd integer, let $R$ be a $k$-ring with ring partition $(X_1,\dots,X_k)$, and for each $i \in \{1,\dots,k\}$, let $X_i = \{u_i^1,\dots,u_i^{|X_i|}\}$ be an ordering of $X_i$ such that $X_i \subseteq N_R[u_i^{|X_i|}] \subseteq \dots \subseteq N_R[u_i^1] = X_{i-1} \cup X_i \cup X_{i+1}$, as in the definition of a ring. For all $i \in \{1,\dots,k\}$ and $j,\ell \in \{1,\dots,|X_i|\}$ such that $j \leq \ell$ (resp. $j < \ell$), we say that $u_i^j$ is {\em lower} (resp.\ {\em strictly lower}) than $u_i^{\ell}$, and that $u_i^{\ell}$ is {\em higher} (resp.\ {\em strictly higher}) than $u_i^j$; under these circumstances, we also write $u_i^j \leq u_i^{\ell}$ (resp.\ $u_i^j < u_i^{\ell}$) and $u_i^{\ell} \geq u_i^j$ (resp.\ $u_i^{\ell} > u_i^j$). For each $i \in \{1,\dots,k\}$ let $s_i = u_i^1$ and $t_i = u_i^{|X_i|}$.\footnote{Thus, $s_i$ is the lowest and $t_i$ the highest vertex in $X_i$. Note that this means that $s_i$ is a highest-degree and $t_i$ a lowest-degree vertex in $X_i$.} Further, suppose that $c$ is a proper coloring of $R \setminus t_2$. For all $X \subseteq V(R) \setminus \{t_2\}$, set $c(X) = \{c(x) \mid x \in X\}$. Given distinct colors $a,b \in c(V(R) \setminus \{t_2\})$ and an index $i \in \{1,\dots,k\}$, we say that $a$ is {\em lower} than $b$ in $X_i$ with respect to $c$, and that $b$ is {\em higher} than $a$ in $X_i$ with respect to $c$, provided that either 
\begin{itemize} 
\item $a \in c(X_i \setminus \{t_2\})$ and $b \notin c(X_i \setminus \{t_2\})$,\footnote{Obviously, if $i \neq 2$, then $X_i \setminus \{t_2\} = X_i$.} or 
\item there exist indices $j,\ell \in \{1,\dots,|X_i|\}$ such that $j < \ell$, $c(u_i^j) = a$, and $c(u_i^{\ell}) = b$. 
\end{itemize} 
Let $c_1 = c(s_1)$.\footnote{Note that this means that $c_1 \notin c(X_2 \setminus \{t_2\})$. This is because $c(s_1) = c_1$, $s_1$ is complete to $X_2$ in $R$, and $c$ is a proper coloring of $R \setminus t_2$.} We say that $c$ is {\em unimprovable} if for all colors $a \in c(V(R) \setminus \{t_2\})$ such that $a \neq c_1$, and all components $Q$ of $T^{c_1,a}_{R \setminus t_2,c}$ that do not contain $s_1$, both the following are satisfied: 
\begin{itemize} 
\item for all odd $i \in \{3,\dots,k\}$ such that $Q$ intersects $X_i$, $c_1$ is lower than $a$ in $X_i$ with respect to $c$; 
\item for all even $i \in \{3,\dots,k\}$ such that $Q$ intersects $X_i$, $c_1$ is higher than $a$ in $X_i$ with respect to $c$. 
\end{itemize} 
We remark that if $c$ is an unimprovable coloring of $R \setminus t_2$, then by definition, $c$ is a proper coloring of $R \setminus t_2$, but it need not be an optimal coloring of $R \setminus t_2$, i.e.\ it may possibly use more than $\chi(R \setminus t_2)$ colors. 

\begin{lemma} \label{lemma-Rab} Let $k \geq 5$ be an odd integer, let $R$ be a $k$-ring with ring partition $(X_1,\dots,X_k)$, and for each $i \in \{1,\dots,k\}$, let $X_i = \{u_i^1,\dots,u_i^{|X_i|}\}$ be an ordering of $X_i$ such that $X_i \subseteq N_R[u_i^{|X_i|}] \subseteq \dots \subseteq N_R[u_i^1] = X_{i-1} \cup X_i \cup X_{i+1}$. Let $G$ be an induced subgraph of $R$, let $c$ be a proper coloring of $G$, let $a,b$ be distinct colors used by $c$, and let $Q$ be any component of $T^{a,b}_{G,c}$. Then there are integers $i,j \in \{1,\dots,k\}$ such that $V(Q)\subseteq X_i\cup X_{i+1}\cup\cdots\cup X_{j-1}\cup X_j$,\footnote{As usual, indices are understood to be modulo $k$.} and such that $Q$ consists of an induced path $p_i,\dots,p_j$, where $p_{\ell}\in X_{\ell}$ for all $\ell \in \{i, \dots, j\}$, plus, optionally for each $\ell \in \{i,\dots,j\}$, a vertex $p'_{\ell}\in X_{\ell}$, strictly higher than $p_{\ell}$ in $X_{\ell}$,\footnote{So, if $p_{\ell}'$ exists, then it is dominated by $p_{\ell}$ in $R$.} with $N_Q(p'_{\ell})=\{p_{\ell}\}$. 
\end{lemma}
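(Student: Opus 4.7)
The plan is to exploit that $Q$ is a bipartite induced subgraph of $R$ combined with the fact that $R$, being an odd-length ring, contains no even hole (Lemma~\ref{lemma-ring-chordal}(b)). Since $c$ is proper and every vertex of $Q$ is colored $a$ or $b$, $Q$ is bipartite. Any cycle of $Q$ would have even length; a shortest such cycle is induced in $Q$, and since $Q$ is induced in $R$, it would be an even hole of $R$, contradicting Lemma~\ref{lemma-ring-chordal}(b). Hence $Q$ is acyclic, and being connected, $Q$ is a tree.

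Next I analyze $V(Q)\cap X_\ell$. Since $X_\ell$ is a clique and $Q$ is bipartite, $|V(Q)\cap X_\ell|\le 2$. When equality holds, write $V(Q)\cap X_\ell=\{p_\ell,p'_\ell\}$ with $p_\ell$ the lower of the two; by Lemma~\ref{lemma-ring-char}(d) and the chosen ordering of $X_\ell$, $N_R[p'_\ell]\subseteq N_R[p_\ell]$. If some $z\in V(Q)\setminus\{p_\ell\}$ were adjacent in $Q$ to $p'_\ell$, then $z\in N_R(p_\ell)$, so $z$ would be adjacent in $G$ to both $p_\ell$ and $p'_\ell$; but $\{c(p_\ell),c(p'_\ell)\}=\{a,b\}$, leaving no admissible color for $z$ in $\{a,b\}$. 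Hence $p'_\ell$ is a leaf of $Q$ whose unique neighbor is $p_\ell$. When $|V(Q)\cap X_\ell|=1$ I let $p_\ell$ denote that vertex and leave $p'_\ell$ undefined.

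Now I determine the global shape. Let $I=\{\ell:V(Q)\cap X_\ell\neq\emptyset\}$ and let $Q^*$ be the induced subgraph of $Q$ on $\{p_\ell:\ell\in I\}$, obtained by deleting the leaves $p'_\ell$; then $Q^*$ is still a tree. Each $X_\ell$ with $\ell\in I$ contributes exactly one vertex to $Q^*$, and since the only within-$X_\ell$ edges in $Q$ joined $p_\ell$ to $p'_\ell$, every edge of $Q^*$ runs between consecutive $X_\ell$'s. So each vertex of $Q^*$ has at most two neighbors (only among $p_{\ell-1},p_{\ell+1}$), and connectedness of $Q^*$ forces $I$ to be a cyclic arc. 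A tree of maximum degree at most two is a path. If $I\subsetneq\{1,\dots,k\}$, writing $I=\{i,i+1,\dots,j\}$, the only way to link the $p_\ell$ by consecutive-pair edges is the path $p_i,p_{i+1},\dots,p_j$. If $I=\{1,\dots,k\}$, there are $k$ potential consecutive-pair edges but $Q^*$ has exactly $k-1$ edges (using all $k$ would produce a $k$-cycle inside the tree $Q$), so exactly one consecutive pair, say $\{p_j,p_i\}$ with $i\equiv j+1\pmod k$, is missing, making $Q^*$ the path $p_i,p_{i+1},\dots,p_j$ wrapping once around. In both cases $Q^*$ is induced in $R$, so this path is induced, and combined with the pendants $p'_\ell$ (each attached only to the corresponding $p_\ell$) this yields the claimed description of $Q$.

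The main obstacle I anticipate is the wrap-around case $I=\{1,\dots,k\}$: one must use the tree property of $Q$ to prevent $Q^*$ from closing into a $k$-cycle, thereby guaranteeing exactly one missing consecutive-pair edge so that $Q^*$ is still a path. The odd length of $R$ enters essentially here, as it is what forces $Q$ to be a tree in the first place.
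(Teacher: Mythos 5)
Your proof is correct and follows the same route as the paper: the paper's (very brief) proof likewise observes that, since all holes of $R$ have odd length $k$, the bipartite graph $T^{a,b}_{G,c}$ has no cycles, and then reads off the path-plus-pendants structure from the definition of a ring. You have simply written out in full the details that the paper leaves to the reader (the bound $|V(Q)\cap X_\ell|\le 2$, the domination argument making the higher vertex a pendant, and the arc/wrap-around analysis showing the lower vertices form an induced path), all of which check out.
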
 
\begin{proof} 
By Lemma~\ref{lemma-ring-chordal}, all holes in $R$ are of length $k$, and in particular, $R$ contains no even holes. The result now readily follows from the relevant definitions. 
\end{proof} 

Our next lemma shows that any proper coloring of $R \setminus t_2$ (where $R$ and $t_2$ are as above) can be turned into an unimprovable coloring that uses no more colors than the original coloring of $R \setminus t_2$.\footnote{In particular, this implies that if $R \setminus t_2$ is $r$-colorable, then there exists an unimprovable coloring of $R \setminus t_2$ that uses at most $r$ colors.}

\begin{lemma} \label{lemma-unimprov-alg} There exists an algorithm with the following specifications: 
\begin{itemize} 
\item Input: An odd ring $R$ with ring partition $(X_1,\dots,X_k)$, for each $i \in \{1,\dots,k\}$, an ordering $X_i = \{u_i^1,\dots,u_i^{|X_i|}\}$ of $X_i$ such that $X_i \subseteq N_R[u_i^{|X_i|}] \subseteq \dots \subseteq N_R[u_i^1] = X_{i-1} \cup X_i \cup X_{i+1}$, and a proper coloring $c$ of $R \setminus u_2^{|X_2|}$; 
\item Output: An unimprovable coloring of $R \setminus u_2^{|X_2|}$ that uses no more colors than $c$ does; 
\item Running time: $O(n^4)$. 
\end{itemize} 
\end{lemma}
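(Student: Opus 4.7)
The plan is a Kempe-chain improvement procedure. Set $s_1 = u_1^1$, $t_2 = u_2^{|X_2|}$, and $c_1 = c(s_1)$. The algorithm iterates the following step: while $c$ is not unimprovable, pick a color $a \neq c_1$ with $a \in c(V(R) \setminus \{t_2\})$ and a component $Q$ of $T^{c_1,a}_{R \setminus t_2, c}$ not containing $s_1$ that witnesses the violation at some index $i \in \{3, \ldots, k\}$, and recolor $V(Q)$ by swapping colors $c_1$ and $a$.

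I would first check that the swap is legal. Since $s_1$ is complete in $R$ to $X_1 \cup X_2 \cup X_k$ and is colored $c_1$, any other vertex of $X_1 \cup (X_2 \setminus \{t_2\}) \cup X_k$ colored $c_1$ or $a$ lies in the component of $T^{c_1,a}_{R \setminus t_2, c}$ containing $s_1$; hence $V(Q) \subseteq X_3 \cup \cdots \cup X_{k-1}$. As a component of the bichromatic subgraph, $V(Q)$ has no neighbors colored $c_1$ or $a$ outside itself, so after the swap the coloring is still proper. The set of colors used does not change, and in particular $c(s_1) = c_1$ is preserved throughout the algorithm.

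Next, I would invoke Lemma~\ref{lemma-Rab} to describe $Q$: it is an induced path $p_r, p_{r+1}, \ldots, p_s$ with $p_\ell \in X_\ell$ and $\{r, \ldots, s\} \subseteq \{3, \ldots, k-1\}$, optionally augmented by pendants $p'_\ell \in X_\ell$ that are strictly higher than $p_\ell$ in $X_\ell$ and are adjacent only to $p_\ell$ in $Q$. Colors alternate along this path, so exactly one of two possibilities occurs: either every odd $\ell \in \{r,\ldots,s\}$ has $p_\ell$ colored $c_1$ and every even $\ell$ has $p_\ell$ colored $a$ (a \emph{good} component, for which unpacking the definition of ``lower than'' shows that the unimprovable condition holds at every such $\ell$), or the opposite happens (a \emph{bad} component, for which the condition fails at every such $\ell$). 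Hence the witnessing components are exactly the bad ones, and swapping $c_1 \leftrightarrow a$ on $V(Q)$ turns a bad $Q$ into a good one.

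For termination, for each $\ell \in \{3, \ldots, k\}$ set $\pi_\ell(c) = j$ if $c(u_\ell^j) = c_1$, and $\pi_\ell(c) = |X_\ell| + 1$ if no vertex of $X_\ell$ is colored $c_1$; then define $\Phi(c) = \sum_{\ell \text{ odd}} \pi_\ell(c) - \sum_{\ell \text{ even}} \pi_\ell(c)$, the sums running over $\ell \in \{3,\ldots,k\}$. A swap on a bad $Q$ affects $\pi_\ell$ only for $\ell \in \{r,\ldots,s\}$, and a case analysis on whether $Q \cap X_\ell$ equals $\{p_\ell\}$ or $\{p_\ell, p'_\ell\}$, combined with the parity of $\ell$, shows that each such $\ell$ contributes a strict decrease of at least $1$ to $\Phi$; the key inequality in each case is $\mathrm{pos}(p_\ell) < \mathrm{pos}(p'_\ell) \leq |X_\ell|$. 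Since $|\Phi(c)| \leq \sum_{\ell=3}^{k}(|X_\ell| + 1) = O(n)$, the loop halts after $O(n)$ swaps. For the cost of each iteration, identifying the components of $T^{c_1,a}_{R \setminus t_2, c}$ and testing the unimprovable condition on each takes $O(n^2)$ for a fixed color $a$ and $O(n^3)$ over all colors, which combined with $O(n)$ iterations yields the claimed $O(n^4)$ bound. The main obstacle I anticipate is verifying the four subcases of the potential-drop calculation cleanly; everything else is routine bookkeeping on top of Lemma~\ref{lemma-Rab}.
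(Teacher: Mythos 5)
Your proposal is correct and follows essentially the same route as the paper: repeated Kempe swaps of $c_1$ and $a$ on violating components, with Lemma~\ref{lemma-Rab} giving the all-or-nothing structure of a violation along a component, and a potential function that strictly decreases with each swap (your signed sum $\Phi$ equals the paper's ``rank'' up to an additive constant), yielding $O(n)$ swaps at $O(n^3)$ each, hence $O(n^4)$.
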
 
\begin{proof} 
To simplify notation, for all $i \in \{1,\dots,k\}$, we set $s_i = u_i^1$ and $t_i = u_i^{|X_i|}$. (Thus, $c$ is a proper coloring of $R \setminus t_2$.) Let $r$ be the number of colors used by $c$; by symmetry, we may assume that $c:V(R) \setminus \{t_2\} \rightarrow \{1,\dots,r\}$. Set $c_1 = c(s_1)$. 

Now, for every proper coloring $\widetilde{c}:V(R) \setminus \{t_2\} \rightarrow \{1,\dots,r\}$ of $R \setminus t_2$ such that $\widetilde{c}(s_1) = c_1$,\footnote{Note that this implies that $c_1 \notin \widetilde{c}(X_2 \setminus \{t_2\})$. This is because $\widetilde{c}(s_1) = c_1$, $s_1$ is complete to $X_2$ in $R$, and $\widetilde{c}$ is a proper coloring of $R \setminus t_2$.} we define the {\em rank} of $\widetilde{c}$, denoted by $\text{rank}(\widetilde{c})$, as follows. 
\begin{itemize} 
\item For all odd $i \in \{3,\dots,k\}$, if there exists an index $j \in \{1,\dots,|X_i|\}$ such that $\widetilde{c}(u_i^j) = c_1$,\footnote{Note that if $j$ exists, then it is unique. This is because $X_i$ is a clique of $R \setminus t_2$, and $\widetilde{c}$ is a proper coloring of $R \setminus t_2$.} then we set $r_i(\widetilde{c}) = j$, and otherwise, we set $r_i(\widetilde{c}) = |X_i|+1$. 
\item For all even $i \in \{3,\dots,k\}$, if there exists an index $j \in \{1,\dots,|X_i|\}$ such that $\widetilde{c}(u_i^j) = c_1$,\footnote{As before, if $j$ exists, then it is unique.} then we set $r_i(\widetilde{c}) = |X_i|-j+2$, and otherwise, we set $r_i(\widetilde{c}) = 1$. 
\item We set $\text{rank}(\widetilde{c}) = \sum\limits_{i=3}^k r_i(\widetilde{c})$.\footnote{Note that $k-2 \leq \text{rank}(\widetilde{c}) \leq k-2+\sum\limits_{i=3}^k |X_i|$. So, rank can take at most $1+\sum\limits_{i=3}^k |X_i| < n$ different values.} 
\end{itemize} 

The algorithm proceeds as follows. We check whether the input coloring $c$ is unimprovable by examining all colors $a \in \{1,\dots,r\} \setminus \{c_1\}$, and all components $Q$ of $T^{c_1,a}_{R \setminus t_2,c}$ that do not contain $s_1$; this can be done in $O(n^3)$ time. If $c$ is unimprovable, then we return $c$, and we stop. Otherwise, the algorithm found some color $a \in \{1,\dots,r\} \setminus \{c_1\}$, some component $Q$ of $T^{c_1,a}_{R \setminus t_2,c}$ that does not contain $s_1$, and some index $i^* \in \{3,\dots,k\}$ such that $Q$ intersects $X_{i^*}$ and either 
\begin{itemize} 
\item $i^*$ is odd, and $a$ is lower than $c_1$ in $X_{i^*}$ with respect to $c$; or 
\item $i^*$ is even, and $a$ is higher than $c_1$ in $X_{i^*}$ with respect to $c$. 
\end{itemize} 
Lemma~\ref{lemma-Rab} then implies that both the following hold: 
\begin{itemize} 
\item for all odd $i \in \{3,\dots,k\}$ such that $Q$ intersects $X_i$, $a$ is lower than $c_1$ in $X_i$ with respect to $c$; 
\item for all even $i \in \{3,\dots,k\}$ such that $Q$ intersects $X_i$, $a$ is higher than $c_1$ in $X_i$ with respect to $c$. 
\end{itemize} 
Let $c'$ be the coloring of $R \setminus t_2$ obtained from $c$ by swapping colors $c_1$ and $a$ on $Q$.\footnote{Since $Q$ does not contain $s_1$, we have that $c'(s_1) = c(s_1) = c_1$.} Note that $\text{rank}(c') < \text{rank}(c)$. We now update the coloring $c$ by setting $c := c'$, and we obtain an unimprovable coloring of $R \setminus t_2$ by making a recursive call to the algorithm. 

The algorithm terminates because the rank of the coloring $c$ decreases before each recursive call. We make $O(n)$ recursive calls,\footnote{This is because rank can take at most $n$ different values, and the rank of our coloring decreases before each recursive call.} and otherwise, the slowest step of the algorithm takes $O(n^3)$ time. So, the total running time of the algorithm is $O(n^4)$. 
\end{proof} 

We now prove a technical lemma (Lemma~\ref{lemma-main-technical}) that is at the heart of our proof of Theorem~\ref{thm-ring-hyperhole} for odd rings. We also rely on Lemma~\ref{lemma-main-technical} in our coloring algorithm for rings.\footnote{More precisely, our coloring algorithm for rings relies on Lemma~\ref{lemma-reduce-chi}, which is an easy corollary of Lemma~\ref{lemma-main-technical} and Theorem~\ref{thm-ring-hyperhole}.} We remark that in our proof of Lemma~\ref{lemma-main-technical}, we repeatedly rely on Lemma~\ref{lemma-Rab} without explicitly stating this.\footnote{Essentially, every time we consider a component $Q$ as in Lemma~\ref{lemma-Rab}, we keep in mind the structure of $Q$, as described in Lemma~\ref{lemma-Rab}.} 

\begin{lemma} \label{lemma-main-technical} Let $k \geq 5$ be an odd integer, let $R$ be a $k$-ring with ring partition $(X_1,\dots,X_k)$, and for each $i \in \{1,\dots,k\}$, let $X_i = \{u_i^1,\dots,u_i^{|X_i|}\}$ be an ordering of $X_i$ such that $X_i \subseteq N_R[u_i^{|X_i|}] \subseteq \dots \subseteq N_R[u_i^1] = X_{i-1} \cup X_i \cup X_{i+1}$. For all $i \in \{1,\dots,k\}$, set $s_i = u_i^1$ and $t_i = u_i^{|X_i|}$. Let $c$ be an unimprovable coloring of $R \setminus t_2$, and let $r$ be the number of colors used by $c$.\footnote{In particular, $c$ is a proper coloring of $R \setminus t_2$. Furthermore, we have that $\chi(R \setminus t_2) \leq r$, and this inequality may possibly be strict.} Let $c_1 = c(s_1)$, and let $S = \{x \in V(R) \mid x \neq t_2, c(x) = c_1\}$.\footnote{Note that $S$ is a stable set in $R \setminus t_2$. Furthermore, $s_1 \in S$, and in particular, $S \neq \emptyset$.} Then both the following hold: 
\begin{itemize} 
\item[(a)] either $\omega(R \setminus S) \leq r-1$, or $R$ contains a $k$-hyperhole of chromatic number $r+1$; 
\item[(b)] if every $k$-ring $R'$ such that $|V(R')| < |V(R)|$ contains a $k$-hyperhole of chromatic number $\chi(R')$, then either $\chi(R \setminus S) \leq r-1$, or $R$ contains a $k$-hyperhole of chromatic number $r+1$. 
\end{itemize} 
\end{lemma}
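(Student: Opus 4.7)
I will prove (a) first by a direct construction of the required hyperhole, and then deduce (b) via an induction on $|V(R)|$ that leans on (a). For (a), assume $\omega(R\setminus S)\geq r$. Since $c$ restricted to $V(R)\setminus(S\cup\{t_2\})$ uses only the $r-1$ colors in $\{1,\dots,r\}\setminus\{c_1\}$, any $r$-clique $K$ in $R\setminus S$ must contain $t_2$, and $K\setminus\{t_2\}$ uses each non-$c_1$ color exactly once. Since $N_R(t_2)\subseteq X_1\cup X_2\cup X_3$ and $X_1$ is anticomplete to $X_3$, either $K\subseteq X_1\cup X_2$ (Case~A) or $K\subseteq X_2\cup X_3$ (Case~B). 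In Case~A, $s_1$ is complete to $X_1\cup X_2$, so $\{s_1\}\cup K$ is an $(r+1)$-clique in $R$; setting $Y_1=\{s_1\}\cup(K\cap X_1)$, $Y_2=K\cap X_2$, and $Y_i=\{s_i\}$ for $i\in\{3,\dots,k\}$ yields a $k$-hyperhole $H\subseteq R$ with $\omega(H)\geq r+1$.

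In Case~B, for each color $a\neq c_1$ labeling a vertex $v_a=u_3^{j_a}\in K\cap X_3$, let $Q_a$ be the component of $T^{c_1,a}_{R\setminus t_2,c}$ containing $v_a$. If $s_1\notin Q_a$ for some such $a$, then unimprovability applied at the odd index~$3$ furnishes a vertex $u_3^{j_0}\in S\cap X_3$ with $j_0<j_a$; the nesting of neighborhoods in the ordering of $X_3$ then forces $u_3^{j_0}$ to be complete in $R$ to $K\cap X_2$, making $K\cup\{u_3^{j_0}\}$ an $(r+1)$-clique and reducing us to the Case~A recipe. Otherwise every $Q_a$ contains $s_1$; by Lemma~\ref{lemma-Rab} together with the impossibility of repeated colors on adjacent vertices, each $Q_a$ must carry a path from $s_1$ the long way around the ring through $X_k,X_{k-1},\dots,X_4$ to $v_a$, with colors alternating $c_1,a,c_1,\dots$. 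Aggregating these paths over all such $a$ produces, at each odd clique $X_i$ with $i\geq 3$, one vertex of each color involved, and at each even clique $X_i$ ($4\leq i\leq k-1$) a single $c_1$-vertex common to all paths; combining these with $Y_1=\{s_1\}$ and $Y_2=K\cap X_2$, and further augmenting by unimprovability applied to colors used in $K\cap X_2$ (exploiting the oddness of $k$), one assembles a $k$-hyperhole $H$ satisfying either $\omega(H)\geq r+1$ or $|V(H)|\geq rm+1$ where $m=(k-1)/2$, so $\chi(H)\geq r+1$ by Lemma~\ref{lemma-hyperhole-chi-formula}. Since $c$ certifies $\chi(R)\leq r+1$, the hyperhole $H$ has chromatic number exactly $r+1$.

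For (b), the induction hypothesis says every $k$-ring on fewer vertices than $R$ contains a $k$-hyperhole whose chromatic number equals its own. Suppose $\chi(R\setminus S)\geq r$. Part~(a) already handles $\omega(R\setminus S)\geq r$, so assume $\omega(R\setminus S)\leq r-1<\chi(R\setminus S)$. Repeatedly removing simplicial vertices from $R\setminus S$ (each such vertex $v$ has $|N[v]|\leq\omega(R\setminus S)\leq r-1<\chi$, so Lemma~\ref{lemma-simplicial-chi} preserves $\chi$), Lemma~\ref{lemma-ring-simplicial} produces a $k$-subring $R_t\subseteq R\setminus S$ with $\chi(R_t)=\chi(R\setminus S)\geq r$ and $|V(R_t)|<|V(R)|$ (since $s_1\in S$). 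By induction, $R_t$ contains a $k$-hyperhole $H'$ with $\chi(H')=\chi(R_t)\geq r$. Because $\omega(H')\leq\omega(R\setminus S)\leq r-1<\chi(H')$, Lemma~\ref{lemma-hyperhole-chi-formula} forces $|V(H')|\geq(r-1)m+1$ with $m=(k-1)/2$. Extending $H'$ by inserting $s_1$ into its first clique (valid since $s_1$ is complete to $X_2\cup X_k$) and augmenting the remaining cliques by those vertices of $S$ whose admissibility is guaranteed by the unimprovable structure of $c$, one obtains a $k$-hyperhole $H\subseteq R$ with $|V(H)|\geq rm+1$, giving $\chi(H)\geq r+1$ by Lemma~\ref{lemma-hyperhole-chi-formula}.

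The main obstacle is the sub-case of Case~B in (a) where every $Q_a$ contains $s_1$ and $|K\cap X_2|\geq 2$: the alternating two-colored paths supplied by Lemma~\ref{lemma-Rab} do not by themselves deliver an $(r+1)$-clique, and a careful accounting using pendants and an additional application of unimprovability (to colors coming from $K\cap X_2$, using the oddness of $k$) is needed to reach $|V(H)|\geq rm+1$. A related subtlety in (b) is that the additions of $S$-vertices to the remaining cliques of $H'$ must respect the completeness required of a hyperhole, which again rests on the unimprovable structure of $c$.
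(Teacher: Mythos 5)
Your overall skeleton follows the paper's strategy (reduce part~(a) to the situation where the only $r$-clique of $R\setminus S$ is $X_2\cup Y_3$, propagate colors around the ring via unimprovability; for part~(b), strip simplicial vertices, invoke the inductive hypothesis to get a hyperhole $H'$ with $\chi(H')=\lceil 2|V(H')|/(k-1)\rceil=r$, then enlarge it using $S$), but at the two decisive points you assert the needed conclusion instead of proving it. In (a), in the sub-case you yourself flag as ``the main obstacle'' (every $Q_a$ contains $s_1$ and $|K\cap X_2|\geq 2$), the aggregated path vertices give a hyperhole with only about $1+|X_2|+|Y_3|\tfrac{k-1}{2}+\tfrac{k-3}{2}$ vertices, which falls short of the target $r\tfrac{k-1}{2}+1$ by $(|X_2|-1)\tfrac{k-3}{2}$; closing that shortfall is where all the work lies. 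The paper does it by building a specific larger hyperhole ($Z_1=\{s_1\}$, $Z_2=X_2$, $Z_3=Y_3$, with each even $Z_i$ running up to $x_i^{c_1}$ and each odd $Z_i$ up to the highest vertex adjacent to both neighbouring $c_1$-vertices) and then arguing by contradiction: if its chromatic number were at most $r$, some colour $d\in c(X_2')$ would appear fewer than $\tfrac{k-1}{2}$ times on $H\setminus t_2$, and a second application of unimprovability (to the component of $T^{c_1,d}$ through $x_i^{c_1}$ at the smallest even $i\geq 4$ with $d\notin c(Z_i)$) forces $d$ onto at least $\tfrac{k-1}{2}$ vertices. Your sentence ``further augmenting by unimprovability applied to colors used in $K\cap X_2$ \dots one assembles a $k$-hyperhole $H$ satisfying either $\omega(H)\geq r+1$ or $|V(H)|\geq rm+1$'' states exactly this conclusion with no argument, and your final paragraph concedes as much; so (a) is not proved.

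The same holds for (b): ``augmenting the remaining cliques by those vertices of $S$ whose admissibility is guaranteed by the unimprovable structure of $c$'' is the entire difficulty, not a routine step. One cannot simply insert the $S$-vertices into the cliques of $H'$ and keep all of $H'$: an $S$-vertex of $X_i$ may be nonadjacent to the top vertices of $H'$ in $X_{i\pm 1}$, and raising a clique to reach $x_i^{c_1}$ may force you to discard top vertices of the neighbouring cliques. The paper needs (i) a normalization of $H'$ so that $V(H')\cap X_i=\{x\in X_i\mid x\leq h_i\}\setminus S$, (ii) the claim that $c_1\in c(X_i)$ for the relevant even indices (Claim~\ref{claim-c1-in-even}), (iii) a carefully chosen system of thresholds $w_i$ (mixing $h_i$, $x_i^{c_1}$, and ``highest $H'$-vertex adjacent to $x_{i-1}^{c_1}$'') shown, via unimprovability, to induce a hyperhole, and (iv) a further unimprovability argument (Claim~\ref{claim-W-big}) showing that the vertices of $H'$ lost in this process are compensated, so the net gain is at least $\tfrac{k-1}{2}$. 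None of this appears in your proposal beyond the acknowledgement that ``a related subtlety'' exists. In short, the framework is right and matches the paper, but the quantitative core of both (a) and (b) — precisely the parts you label as obstacles — is missing, so the proof is incomplete.
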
 
\begin{proof} 
By hypotheses, we have that $\chi(R \setminus t_2) \leq r$; it follows that $\omega(R) \leq \chi(R) \leq r+1$. If $\omega(R) = r+1$, then both (a) and (b) follow from Lemma~\ref{lemma-ring-large-omega}; thus, we may assume that $\omega(R) \leq r$. 

Set $Y_1 = N_R(t_2) \cap X_1$, $X_2' = X_2 \setminus \{t_2\}$, and $Y_3 = N_R(t_2) \cap X_3$. Note that $N_R(t_2) = Y_1 \cup X_2' \cup Y_3$, with $Y_1,X_2',Y_3$ pairwise disjoint. Furthermore, we have that $s_1 \in Y_1$ and $s_3 \in Y_3$, and in particular, $Y_1$ and $Y_3$ are nonempty (the set $X_2'$ may possibly be empty). Finally, we remark that $Y_1 \cup X_2$ and $X_2 \cup Y_3$ are maximal cliques of $R$. 

Let $C$ be the set of colors used by $c$; then $|C| = r$. To simplify notation, for all distinct colors $a,b \in C$, we write $T^{a,b}$ instead of $T^{a,b}_{R \setminus t_2,c}$. Further, for all $i \in \{1,\dots,k\} \setminus \{2\}$ and $a \in c(X_i)$, we denote by $x_i^a$ the (unique) vertex of $X_i$ to which $c$ assigned color $a$; similarly, for all $a \in c(X_2')$, we denote by $x_2^a$ the (unique) vertex of $X_2'$ to which $c$ assigned color $a$. Finally, when we say that some color is higher or lower than some other color in some $X_i$, we always mean this with respect to our coloring $c$.

\begin{quote} 
\begin{claim} \label{claim-a} Either $\omega(R \setminus S) \leq r-1$, or $R$ contains a $k$-hyperhole of chromatic number $r+1$. In other words, (a) holds. 
\end{claim} 
\end{quote} 
{\em Proof of Claim~\ref{claim-a}.} 
Since $\omega(R) \leq r$, we have that $\omega(R \setminus S) \leq r$. Thus, we may assume that $\omega(R \setminus S) = r$, for otherwise we are done; since $\omega(R) \leq r$, this implies that $\omega(R) = r$. 

Since $c$ is a proper coloring of $R \setminus t_2$ that uses only $r$ colors, and since $S$ is a color class of the coloring $c$, we see that $S$ intersects all cliques of size $r$ in $R$ that do not contain $t_2$. Furthermore, there are exactly two maximal cliques in $R$ that contain $t_2$, namely $Y_1 \cup X_2$ and $X_2 \cup Y_3$. Since $S$ intersects $Y_1 \cup X_2$ (because $s_1 \in Y_1 \cap S$), we deduce that $X_2 \cup Y_3$ is the unique clique of $R \setminus S$ of size $r$. (Note that this implies that $X_2' \cup Y_3$ is a clique of size $r-1$.) In particular, $c_1 \notin c(X_2' \cup Y_3)$. 

Consider any color $a \in c(Y_3)$, and let $Q$ be the component of $T^{c_1,a}$ that contains the vertex of $Y_3$ colored $a$. Since $c_1 \notin c(Y_3)$, we see that $a \neq c_1$, and furthermore, $a$ is lower than $c_1$ in $X_3$. So, since $c$ is unimprovable, we have that $s_1 \in V(Q)$. Further, since $c_1 \notin c(X_2')$, we see that $V(Q) \cap X_2' = \emptyset$. We now deduce that the following hold: 
\begin{itemize} 
\item for every odd $i \neq 1$, we have that $c(Y_3) \subseteq c(X_i)$; 
\item for every even $i \neq 2$, some vertex of $X_i$ is colored $c_1$,\footnote{Recall that this vertex is called $x_i^{c_1}$.} and furthermore, this vertex is adjacent to all vertices of $X_{i-1} \cup X_{i+1}$ that received a color used on $Y_3$. 
\end{itemize} 

For odd $i \geq 5$, let $h_i$ be the highest vertex of $X_i$ that is adjacent both to $x_{i-1}^{c_1}$ and to $x_{i+1}^{c_1}$.\footnote{Let us check that such an $h_i$ exists. Since $i \geq 5$ is odd, we see that either $5 \leq i \leq k-2$ or $i = k$. If $5 \leq i \leq k-2$, then $i-1,i+1 \geq 4$ are both even, and so by what we just showed, $x_{i-1}^{c_1}$ and $x_{i+1}^{c_1}$ are both defined. If $i = k$, then once again, $i-1 \geq 4$ is even, and so $x_{i-1}^{c_1}$ is defined, and furthermore, since our subscripts are understood to be modulo $k$, we have that $x_{i+1}^{c_1} = x_1^{c_1} = s_1$. So, in either case, $x_{i-1}^{c_1}$ and $x_{i+1}^{c_1}$ are both defined. Moreover, at least one vertex of $X_i$ (namely, the vertex $s_i$) is adjacent both to $x_{i-1}^{c_1}$ and to $x_{i-1}^{c_1}$. So, $h_i$ exists.} We now define sets $Z_1,\dots,Z_k$ as follows: 
\begin{itemize} 
\item let $Z_1 = \{s_1\}$, $Z_2 = X_2$, and $Z_3 = Y_3$; 
\item for all even $i \geq 4$, let $Z_i = \{x \in X_i \mid x \leq x_i^{c_1}\}$; 
\item for all odd $i \geq 5$, let $Z_i = \{x \in X_i \mid x \leq h_i\}$. 
\end{itemize} 
Finally, let $H = R[Z_1 \cup Z_2 \cup \dots \cup Z_k]$. 

By construction, $H$ is a $k$-hyperhole of $R$; thus, $\chi(H) \leq \chi(R) \leq r+1$. If $\chi(H) = r+1$, then we are done. So assume that $\chi(H) \leq r$. Then $\Big\lceil \frac{2|V(H)|}{k-1} \Big\rceil = \Big\lceil \frac{|V(H)|}{\alpha(H)} \Big\rceil \leq \chi(H) \leq r$. It follows that $|V(H)| \leq \frac{k-1}{2}r$, and consequently, $|V(H) \setminus \{t_2\}| < \frac{k-1}{2}r$. Now, $X_2' \cup Y_3$ is a clique of size $r-1$ in $R \setminus t_2$, and so $|c(X_2' \cup Y_3)| = r-1$. Furthermore, we know that $c_1 \notin c(X_2' \cup Y_3)$, and so $|\{c_1\} \cup c(X_2' \cup Y_3)| = r$. Since $|V(H) \setminus \{t_2\}| < \frac{k-1}{2}r$, we see that some color from $\{c_1\} \cup c(X_2' \cup Y_3)$ appears on fewer than $\frac{k-1}{2}$ vertices of $H \setminus t_2$. Now, by construction, every color from $\{c_1\} \cup c(Y_3)$ appears $\frac{k-1}{2}$ times on $H \setminus t_2$. It follows that some color $d \in c(X_2')$ appears fewer than $\frac{k-1}{2}$ times on $H \setminus t_2$. Thus, there exists some even $i \geq 4$ such that $d \notin c(Z_i)$;\footnote{By the construction of $Z_i$, this implies that $d \neq c_1$, and that $d$ is higher than $c_1$ in $X_i$.} let $i$ be the smallest such index. Thus, $d$ appears on each $Z_j$, for even $j < i$, and there are $\frac{i}{2}-1$ such $j$'s. On the other hand, let $Q$ be the component of $T^{c_1,d}$ that contains $x_i^{c_1}$. Now, we have that $i \geq 4$ is even, and that $d$ is higher than $c_1$ in $X_i$; since $c$ is unimprovable, we deduce that $s_1 \in V(Q)$. It follows that each $Z_j$, for odd $j > i$, contains a vertex colored $d$; there are $\lceil \frac{k-i}{2} \rceil = \frac{k-i+1}{2}$ such $j$'s. So, in total, at least $(\frac{i}{2}-1)+\frac{k-i+1}{2} = \frac{k-1}{2}$ vertices of $H \setminus t_2$ are colored $d$, contrary to our choice of $d$.~$\blacksquare$ 

\medskip 

It remains to prove (b). For this, we assume that both the following hold: 
\begin{itemize} 
\item every $k$-ring $R'$ such that $|V(R')| < |V(R)|$ contains a $k$-hyperhole of chromatic number $\chi(R')$; 
\item $\chi(R \setminus S) \geq r$; 
\end{itemize} 
and we prove that $R$ contains a $k$-hyperhole of chromatic number $r+1$. 

Since $S$ is a color class of a proper coloring of $R \setminus t_2$ that uses at most $r$ colors, we see that $\chi\Big(R \setminus (S \cup \{t_2\})\Big) \leq r-1$; consequently, $\chi(R \setminus S) \leq r$. Since $\chi(R \setminus S) \geq r$, it follows that $\chi(R \setminus S) = r$. Further, in view of (a), we may assume that $\omega(R \setminus S) \leq r-1$. 

\begin{quote} 
\begin{claim} \label{claim-R-S-hyperhole} $R \setminus S$ contains a $k$-hyperhole $H$ such that $\chi(H) = \Big\lceil \frac{2|V(H)|}{k-1} \Big\rceil = r$. 
\end{claim} 
\end{quote} 
{\em Proof of Claim~\ref{claim-R-S-hyperhole}.} 
Let $v_1,\dots,v_t$ (with $t \geq 0$) be a maximal sequence of pairwise distinct vertices in $R \setminus S$ such that for all $i \in \{1,\dots,t\}$, $v_i$ is simplicial in $R \setminus (S \cup \{v_1,\dots,v_{i-1}\})$. Set $A = \{v_1,\dots,v_t\}$. Suppose first that $R \setminus S = A$. Then $v_1,\dots,v_t$ is a simplicial elimination ordering of $R \setminus S$, and so by coloring $R \setminus S$ greedily using the ordering $v_t,\dots,v_1$, we obtain a proper coloring of $R \setminus S$ that uses only $\omega(R \setminus S)$ colors, contrary to the fact that $\chi(R \setminus S) = r > \omega(R \setminus S)$. So, $R \setminus S \neq A$. Lemma~\ref{lemma-ring-simplicial} and the maximality of $A$ now imply that $R \setminus (S \cup A)$ is a $k$-ring. Since $S \neq \emptyset$, the $k$-ring $R \setminus (S \cup A)$ has fewer vertices than $R$, and so $R \setminus (S \cup A)$ contains a $k$-hyperhole $H$ such that $\chi(H) = \chi\Big(R \setminus (S \cup A)\Big)$. 

Now, Lemma~\ref{lemma-simplicial-chi} and an easy induction guarantee that 
\begin{displaymath} 
\chi(R \setminus S) = \max\Big\{\omega(R \setminus S),\chi\Big(R \setminus (S \cup A)\Big)\Big\}. 
\end{displaymath} 
Since $\chi(R \setminus S) = r$, $\omega(R \setminus S) \leq r-1$, and $\chi(H) = \chi\Big(R \setminus (S \cup A)\Big)$, we deduce that $\chi(H) = r$. Since $\omega(H) \leq \omega(R \setminus S) \leq r-1$, we see that $\omega(H) < \chi(H)$, and so Lemma~\ref{lemma-hyperhole-chi-formula} implies that $\chi(H) = \Big\lceil \frac{|V(H)|}{\alpha(H)} \Big\rceil = \Big\lceil \frac{2|V(H)|}{k-1} \Big\rceil$. Thus, $\chi(H) = \Big\lceil \frac{2|V(H)|}{k-1} \Big\rceil = r$.~$\blacksquare$ 

\medskip 

From now on, let $H$ be as in Claim~\ref{claim-R-S-hyperhole}. Our goal is to find a hyperhole in $R$ of size at least $|V(H)|+\frac{k+1}{2}$; this will imply\footnote{The details are given at the end of the proof of the lemma.} that the chromatic number of that hyperhole is at least $r+1$,\footnote{Since $\chi(R) \leq r+1$, we see that any hyperhole in $R$ of chromatic number at least $r+1$ in fact has chromatic number exactly $r+1$.} which is what we need. 

For each $i \in \{1,\dots,k\}$, let $h_i$ be the highest vertex of $X_i \cap V(H)$. Then $h_1,\dots,h_k,h_1$ is a $k$-hole in $R$, and we may assume that for all $i \in \{1,\dots,k\}$, we have that $V(H) \cap X_i = \{x \in X_i \mid x \leq h_i\} \setminus S$.\footnote{Indeed, set $H' = R[\bigcup\limits_{i=1}^k (\{x \in X_i \mid x \leq h_i\} \setminus S)]$. It is clear that $H'$ is a $k$-hyperhole in $R \setminus S$, and that $H'$ contains $H$ as an induced subgraph. So, $r = \chi(H) \leq \chi(H') \leq \chi(R \setminus S) = r$, and it follows that $\chi(H') = r$. On the other hand, $\omega(H') \leq \omega(R \setminus S) \leq r-1 < \chi(H')$, and so Lemma~\ref{lemma-hyperhole-chi-formula} implies that $\chi(H') = \Big\lceil \frac{|V(H')|}{\alpha(H')} \Big\rceil = \Big\lceil \frac{2|V(H')|}{k-1} \Big\rceil$. Thus, $\chi(H') = \Big\lceil \frac{2|V(H')|}{k-1} \Big\rceil = r$. So, if $H' \neq H$, then from now on, instead of $H$, we simply consider $H'$.} In particular, we have that $s_1,\dots,s_k \in V(H) \cup S$. 

Recall that $c_1 = c(s_1)$. Let $j$ be the largest odd index such that $c(s_i) = c_1$ for all odd $i \in \{1,\dots,j\}$. Then $j \leq k-2$.\footnote{Indeed, $s_k$ and $s_1$ are adjacent, and $c(s_1) = c_1$; so, $c(s_k) \neq c_1$, and it follows that $j \neq k$. Since $j$ and $k$ are both odd, we deduce that $j \leq k-2$.} Furthermore, since $s_j$ is complete to $X_{j+1}$, we have that $c_1 \notin c(X_{j+1})$. 

\begin{quote} 
\begin{claim} \label{claim-c1-in-even} $c_1 \in c(X_i)$ for every even index $i \geq j+3$.\footnote{So, $x_i^{c_1}$ is defined for every even index $i \geq j+3$.} 
\end{claim} 
\end{quote} 
{\em Proof of Claim~\ref{claim-c1-in-even}.} 
Suppose otherwise, and fix the smallest even index $i \geq j+3$ such that $c_1 \notin c(X_i)$. If $c(s_{i-1}) = c_1$, then: 
\begin{itemize} 
\item if $i-1 = j+2$, then the choice of $j$ is contradicted; 
\item if $i-1 \geq j+4$, then the choice of $i$ is contradicted.\footnote{We are using the fact that $s_{i-1}$ is complete to $X_{i-2}$, and so $c(s_{i-1}) \notin c(X_{i-2})$.} 
\end{itemize} 
It follows that $c(s_{i-1}) \neq c_1$. Set $c_{i-1} = c(s_{i-1})$; since $s_{i-1}$ is complete to $X_i$, we have that $c_{i-1} \notin c(X_i)$. Let $Q$ be the component of $T^{c_1,c_{i-1}}$ that contains $s_{i-1}$. We know that $c_1,c_{i-1} \notin c(X_i)$, and so $V(Q) \cap X_i = \emptyset$. On the other hand, by the parity of $i$ and $j$, and by the fact that $c_1 \notin c(X_{j+1})$, we have that $V(Q) \cap X_{j+1} = \emptyset$. Thus, $V(Q) \subseteq X_{j+2} \cup \dots \cup X_{i-1}$. We now have that $s_1 \notin V(Q)$, that $i-1 \geq 3$ is odd, that $Q$ intersects $X_{i-1}$, and that $c_{i-1}$ is lower than $c_1$ in $X_{i-1}$. But this contradicts the fact that $c$ is unimprovable.~$\blacksquare$ 

\medskip 

Recall that $h_i$ be the highest vertex of $X_i \cap V(H)$. Let $\ell$ be the largest odd index such that for every odd $i \in \{1,\dots,\ell\}$, the coloring $c$ assigns color $c_1$ to some vertex of $X_i$ lower than $h_i$.\footnote{So, for all odd $i \in \{1,\dots,\ell\}$, we have that $x_i^{c_1}$ is defined and satisfies $x_i^{c_1} \leq h_i$.} Clearly, $j \leq \ell \leq k-2$.\footnote{The fact that $j \leq \ell$ is immediate from the the choice of $j$ and $\ell$. The fact that $\ell \neq k$ follows from the fact that $c(s_1) = c_1$, and that $s_1$ is complete to $X_k$, so that $c_1 \notin c(X_k)$. Since $\ell$ and $k$ are both odd, it follows that $\ell \leq k-2$.} Now, we define vertices $w_1,\dots,w_k$ as follows: 
\begin{itemize} 
\item for $i \leq \ell+2$, let $w_i = h_i$; 
\item for even $i \geq \ell+3$, let $w_i = \max\{h_i,x_i^{c_1}\}$;\footnote{Claim~\ref{claim-c1-in-even} guarantees that $c_1 \in c(X_i)$, and so $x_i^{c_1}$ is defined.} 
\item for odd $i \geq \ell+4$, let $w_i$ be the highest vertex of $X_i \cap V(H)$ that is adjacent to $x_{i-1}^{c_1}$.\footnote{Let us check that such a $w_i$ exists. First, Claim~\ref{claim-c1-in-even} guarantees that $x_{i-1}^{c_1}$ is defined. It now suffices to show that some vertex of $X_i \cap V(H)$ is adjacent to $x_{i-1}^{c_1}$. Clearly, $s_i$ is adjacent to $x_{i-1}^{c_1}$. Since $c(x_{i-1}^{c_1}) = c_1$, and since $c$ is a proper coloring of $R \setminus t_2$, we have that $c(s_i) \neq c_1$; consequently, $s_i \notin S$. Since $s_1,\dots,s_k \in V(H) \cup S$, we deduce that $s_i \in V(H)$. So, $X_i \cap V(H)$ contains a vertex (namely $s_i$) that is adjacent to $x_{i-1}^{c_1}$.} 
\end{itemize} 
For all $i \in \{1,\dots,k\}$, let $W_i = \{x \in X_i \mid x \leq w_i\}$. Further, let $W = R[W_1 \cup \dots \cup W_k]$. Finally, let $S_W = \{x \in V(W) \mid x \neq t_2, c(x) = c_1\}$. We note that, by construction, $|S_W| \geq \frac{k-1}{2}$.

\begin{quote} 
\begin{claim} \label{claim-W-hyperhole} 
$W$ is a $k$-hyperhole. 
\end{claim} 
\end{quote} 
{\em Proof of Claim~\ref{claim-W-hyperhole}.} 
Suppose otherwise. Then there exists some even $i \geq \ell+3$ such that $x_i^{c_1}$ is nonadjacent to $w_{i-1}$.\footnote{Let us justify this. By supposition, $W$ is not a $k$-hyperhole, and so there exists some $i \in \{1,\dots,k\}$ such that $w_i$ is nonadjacent to $w_{i-1}$ in $R$. By the construction of $W$, we have that $w_1,\dots,w_{\ell+2} \in V(H)$, as well as that $w_k \in V(H)$; since $H$ is a hyperhole, we deduce that $i \geq \ell+3$. Suppose that $i$ is odd (thus, $i \geq \ell+4$). By Claim~\ref{claim-c1-in-even}, $x_{i-1}^{c_1}$ is defined, and since $i \geq \ell+4$ is odd, we know that $w_i$ is adjacent to $x_{i-1}^{c_1}$. On the other hand, since $i \geq \ell+4$ is odd, we have that $w_i \in V(H)$, and so $w_i$ is adjacent to $h_{i-1}$. But since $i-1 \geq \ell+3$ is even, we have by construction that $w_{i-1} = \max\{h_{i-1},x_{i-1}^{c_1}\}$; so, $w_i$ is adjacent to $w_{i-1}$, a contradiction. This proves that $i$ is even. Since $i \geq \ell+3$ is even, we have that $w_i = \max\{h_i,x_i^{c_1}\}$. Furthermore, $i-1$ is odd, and so $w_{i-1} \in V(H)$; since $H$ is a hyperhole, it follows that $h_i$ is adjacent to $w_{i-1}$. Since $w_i$ is nonadjacent to $w_{i-1}$, we deduce that $w_i = x_i^{c_1}$, and that $x_i^{c_1}$ is nonadjacent to $w_{i-1}$.} Let $a = c(w_{i-1})$. 

Suppose that $i = \ell+3$. Then by the choice of $\ell$, no vertex in $W_{i-1} = W_{\ell+2}$ is colored $c_1$. So, $a \neq c_1$, and $c_1$ is higher than $a$ in $X_{i-1} = X_{\ell+2}$. Let $Q$ be the component of $T^{c_1,a}$ that contains $w_{i-1}$. By construction, $V(Q) \cap X_i = \emptyset$, i.e.\ $V(Q) \cap X_{\ell+3} = \emptyset$; on the other hand, by the parity of $i$ and $j$, and by the fact that $c_1 \notin c(X_{j+1})$, we see that $V(Q) \cap X_{j+1} = \emptyset$. Thus, $V(Q) \subseteq X_{j+2} \cup \dots \cup X_{\ell+2}$. We now have that $s_1 \notin V(Q)$, that $\ell+2 \geq 3$ is odd, that $Q$ intersects $X_{\ell+2}$, and that $a$ is lower than $c_1$ in $X_{\ell+2}$. But this contradicts the fact that $c$ is unimprovable. 

Thus, $i \geq \ell+5$. By construction, $x_{i-2}^{c_1}$ is adjacent to $w_{i-1}$, and so if $c_1 \in c(X_{i-1})$, then $w_{i-1} < x_{i-1}^{c_1}$. Thus, $a \neq c_1$, and $a$ is lower than $c_1$ in $X_{i-1}$. Let $Q$ be the component of $T^{c_1,a}$ that contains $w_{i-1}$. Then $V(Q) \cap X_{j+1} = V(Q) \cap X_i = \emptyset$,\footnote{As before, the fact that $V(Q) \cap X_{j+1} = \emptyset$ follows from the parity of $i$ and $j$, and from the fact that $c_1 \notin c(X_{j+1})$. The fact that $V(Q) \cap X_i = \emptyset$ follows from the fact that $w_{i-1}$ is nonadjacent to $x_i^{c_1}$.} and we deduce that $V(Q) \subseteq X_{j+2} \cup \dots \cup X_{i-1}$. But now $s_1 \notin V(Q)$, $i-1 \geq 3$ is odd, $Q$ intersects $X_{i-1}$, and $a$ is lower than $c_1$ in $X_{i-1}$; this contradicts the fact that $c$ is unimprovable.~$\blacksquare$ 

\begin{quote} 
\begin{claim} \label{claim-W-big} 
$|V(W)| \geq |V(H)|+\frac{k-1}{2}$. 
\end{claim} 
\end{quote} 
{\em Proof of Claim~\ref{claim-W-big}.} 
To simplify notation, for all $i \in \{1,\dots,k\}$, we set $H_i = V(H) \cap X_i$. Recall that $S_W = \{x \in V(W) \mid x \neq t_2, c(x) = c_1\}$. We then have that $S_W \subseteq S$, that $S_W$ is a stable set in $R \setminus t_2$, and that $V(H) \cap S_W = \emptyset$. Further, recall that $|S_W| \geq \frac{k-1}{2}$. Thus, it suffices to show that $|V(H)| \leq |V(W) \setminus S_W|$. 

By the construction of $W$, for all indices $i \in \{1,\dots,k\}$ such that either $i \leq \ell+2$ or $i$ is even, we have that $H_i \subseteq W_i \setminus S_W$. We may now assume that for some even index $i \geq \ell+3$, we have that $|W_i \setminus (H_i \cup S_W)| < |H_{i+1} \setminus W_{i+1}|$, for otherwise we are done. Since $W_i \setminus (H_i \cup S_W)$ and $H_{i+1} \setminus W_{i+1}$ are both cliques of $R \setminus t_2$, and since $c$ is a proper coloring of $R \setminus t_2$, we have that $|c(W_i \setminus (H_i \cup S_W))| < |c(H_{i+1} \setminus W_{i+1})|$; fix $a \in c(H_{i+1} \setminus W_{i+1}) \setminus c(W_i \setminus (H_i \cup S_W))$. Then $a \neq c_1$.\footnote{This is because $a \in c(H_{i+1})$, and $c$ does not assign color $c_1$ to any vertex in $V(H) \setminus \{t_2\}$.} Furthermore, we have that $a \notin c(W_i)$,\footnote{By construction, $a \notin c(W_i \setminus (H_i \cup S_W))$, and since $a \neq c_1$, we also have that $a \notin c(S_W)$. Further, $a \in c(H_{i+1})$, and so since $H_i$ is complete to $H_{i+1}$, we have that $a \notin c(H_i)$. Thus, $a \notin c(W_i)$.} whereas by the construction of $W$, and by the fact that $i \geq \ell+3$ is even, we have that $c_1 \in c(W_i)$. It then follows from the construction of $W$ that $a$ is higher than $c_1$ in $X_i$ (possibly $a \notin c(X_i)$). 

Since $a \in c(H_{i+1} \setminus W_{i+1})$, we have that $x_{i+1}^a \in H_{i+1} \setminus W_{i+1}$. Since $i+1$ is odd with $i+1 \geq \ell+4$, we see from the construction of $W$ that $x_{i+1}^a$ is nonadjacent to $x_i^{c_1}$. Let $Q$ be the component of $T^{c_1,a}$ that contains $x_i^{c_1}$. Then $V(Q) \cap X_{j+1} = V(Q) \cap X_{i+1} = \emptyset$,\footnote{The fact that $V(Q) \cap X_{i+1} = \emptyset$ follows from the fact that $a$ is higher than $c_1$ in $X_i$, and $x_i^{c_1}$ is nonadjacent to $x_{i+1}^a$. The fact that $V(Q) \cap X_{j+1} = \emptyset$ follows from the parity of $i$ and $j$, and from the fact that $c_1 \notin c(X_{j+1})$.} and it follows that $V(Q) \subseteq X_{j+2} \cup \dots \cup X_i$. We now have that $s_1 \notin V(Q)$, that $i \geq 4$ is even, that $Q$ intersects $X_i$, and that $a$ is higher than $c_1$ in $X_i$. But this contradicts the fact that $c$ is unimprovable.~$\blacksquare$ 

\medskip 

By Claim~\ref{claim-W-hyperhole}, $W$ is a $k$-hyperhole; since $k$ is odd, we see that $\alpha(W) = \frac{k-1}{2}$. Using Claims~\ref{claim-R-S-hyperhole} and~\ref{claim-W-big}, we now get that 
\begin{displaymath} 
\begin{array}{ccc ccc ccc} 
\chi(W) & \geq & \Big\lceil \frac{|V(W)|}{\alpha(W)} \Big\rceil & = & \Big\lceil \frac{2|V(W)|}{k-1} \Big\rceil & \geq & \Big\lceil \frac{2|V(H)|}{k-1} \Big\rceil+1 & = & r+1. 
\end{array} 
\end{displaymath} 
On the other hand, we have that $\chi(W) \leq \chi(R) \leq r+1$, and we deduce that $\chi(W) = r+1$. This proves (b), and we are done. 
\end{proof} 

We are now ready to prove Theorem~\ref{thm-ring-hyperhole}, restated below for the reader's convenience. 

\begin{thm-ring-hyperhole} Let $k \geq 4$ be an integer, and let $R$ be a $k$-ring. Then $\chi(R) = \max\{\chi(H) \mid \text{$H$ is a $k$-hyperhole in $R$}\}$.
\end{thm-ring-hyperhole} 
\begin{proof} 
If $k$ is even, then the result follows from Lemmas~\ref{lemma-ring-large-omega} and~\ref{lemma-even-ring-col-alg}. So from now on, we assume that $k$ is odd. Clearly, it suffices to show that $R$ contains a $k$-hyperhole of chromatic number $\chi(R)$. We assume inductively that this holds for smaller $k$-rings, i.e.\ we assume that every $k$-ring $R'$ such that $|V(R')| < |V(R)|$ contains a $k$-hyperhole of chromatic number $\chi(R')$. 

Let $(X_1,\dots,X_k)$ be a ring partition of $R$.  For each $i\in\{1, \dots , k\}$, let $X_i = \{u_i^1, \dots, u_i^{|X_i|}\}$ be an ordering of $X_i$ such that $X_i \subseteq N_R[u_i^{|X_i|}] \subseteq \dots \subseteq N_R[u_i^1] = X_{i-1} \cup X_i \cup X_{i+1}$, as in the definition of a ring. For all $i \in \{1,\dots,k\}$, set $s_i = u_i^1$ and $t_i = u_i^{|X_i|}$. Set $r = \chi(R \setminus t_2)$, and note that this implies that $r \leq \chi(R) \leq r+1$. Thus, we may assume that $R$ contains no hyperhole of chromatic number $r+1$, for otherwise we are done. 

Let $c$ be an unimprovable coloring of $R \setminus t_2$ that uses exactly $r$ colors (the existence of such a coloring follows from Lemma~\ref{lemma-unimprov-alg}). Let $C$ be the set of colors used by $c$ (thus, $|C| = r$), and set $c_1 = c(s_1)$ and $S = \{x \in V(R) \mid x \neq t_2, c(x) = c_1\}$. Lemma~\ref{lemma-main-technical} now implies that $\omega(R \setminus S) \leq r-1$ and $\chi(R \setminus S) \leq r-1$. Since $S$ is a stable set in $R$, we see that $\chi(R) \leq \chi(R \setminus S)+1 \leq r$; we already saw that $r \leq \chi(R) \leq r+1$, and so we deduce that $\chi(R) = r$. Further, since $\omega(R \setminus S) \leq r-1$, and since $S$ is a stable set, we see that $\omega(R) \leq r$. If $\omega(R) = r$, then $\chi(R) = \omega(R)$, and the result follows from Lemma~\ref{lemma-ring-large-omega}. Thus, we may assume that $\omega(R) \leq r-1$. Clearly, this implies that $\omega(R \setminus t_2) \leq r-1$. Since $\chi(R \setminus t_2) = r$, we have that $\omega(R \setminus t_2) < \chi(R \setminus t_2)$. 

Suppose that $|X_2| = 1$, i.e.\ that $X_2 = \{t_2\}$. Then by Lemma~\ref{lemma-ring-chordal}(c), $R \setminus t_2$ is chordal, and therefore (by~\cite{Berge-German, D61}) perfect. So, $\chi(R \setminus t_2) = \omega(R \setminus t_2)$, contrary to the fact that $\omega(R \setminus t_2) < \chi(R \setminus t_2)$. So, $|X_2| \geq 2$. Since every vertex in $X_2 \setminus \{t_2\}$ dominates $t_2$ in $R$, Lemma~\ref{lemma-ring-char} readily implies that $R \setminus t_2$ is a $k$-ring with ring partition $(X_1,X_2 \setminus \{t_2\},X_3,\dots,X_k)$. So, by the induction hypothesis, $R \setminus t_2$ contains a $k$-hyperhole $H$ of chromatic number $\chi(R \setminus t_2)$. But recall that $\chi(R \setminus t_2) = r = \chi(R)$. So, $H$ is a $k$-hyperhole in $R$ of chromatic number $\chi(R)$, which is what we needed. 
\end{proof}

\section{Coloring rings} \label{sec:col}

We remind the reader that $\mathcal{R}_{\geq 4}$ is the class of all graphs $G$ that have the property that every induced subgraph of $G$ either is a ring or has a simplicial vertex. By Lemma~\ref{lemma-RRk-hered}, $\mathcal{R}_{\geq 4}$ is hereditary and contains all rings. Our goal in this section is to construct a polynomial-time coloring algorithm for graphs in $\mathcal{R}_{\geq 4}$ (see Theorem~\ref{thm-ring-col-alg}), and more generally, for graphs in $\mathcal{G}_{\text{T}}$ (see Theorem~\ref{thm-GT-col-alg}). We already know how to color even rings (see Lemma~\ref{lemma-even-ring-col-alg}). In the remainder of the section, we focus primarily on odd rings. 

The following lemma is an easy corollary of Theorem~\ref{thm-ring-hyperhole} and Lemma~\ref{lemma-main-technical}, and it is at the heart of our coloring algorithm for odd rings.

\begin{lemma} \label{lemma-reduce-chi} Let $k \geq 5$ be an odd integer, let $R$ be a $k$-ring with ring partition $(X_1,\dots,X_k)$, and for each $i \in \{1,\dots,k\}$, let $X_i = \{u_i^1,\dots,u_i^{|X_i|}\}$ be an ordering of $X_i$ such that $X_i \subseteq N_R[u_i^{|X_i|}] \subseteq \dots \subseteq N_R[u_i^1] = X_{i-1} \cup X_i \cup X_{i+1}$. For all $i \in \{1,\dots,k\}$, set $s_i = u_i^1$ and $t_i = u_i^{|X_i|}$. Let $c$ be an unimprovable coloring of $R \setminus t_2$, and let $r$ be the number of colors used by $c$.\footnote{In particular, $c$ is a proper coloring of $R \setminus t_2$. Furthermore, we have that $\chi(R \setminus t_2) \leq r$, and this inequality may possibly be strict.} Let $c_1 = c(s_1)$, and let $S = \{x \in V(R) \mid x \neq t_2, c(x) = c_1\}$.\footnote{Note that $S$ is a stable set in $R \setminus t_2$.} Then either $\chi(R \setminus S) \leq r-1$ or $\chi(R) = r+1$. 
\end{lemma}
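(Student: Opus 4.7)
My plan is to derive this lemma directly from Lemma~\ref{lemma-main-technical}(b) by using Theorem~\ref{thm-ring-hyperhole} to discharge the inductive hypothesis that the lemma carries. Theorem~\ref{thm-ring-hyperhole} has now been proven in full generality: for every integer $k\geq 4$ and every $k$-ring $R'$, there is a $k$-hyperhole in $R'$ whose chromatic number equals $\chi(R')$. In particular, this property holds for every $k$-ring of size strictly less than $|V(R)|$, so the assumption in the ``if'' clause of Lemma~\ref{lemma-main-technical}(b) is automatically satisfied in our setting.

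Therefore I would simply invoke Lemma~\ref{lemma-main-technical}(b) with the hypotheses given (odd $k\geq 5$, the ring $R$ with its chosen ordering, the vertex $t_2$, and the unimprovable coloring $c$ of $R\setminus t_2$ with exactly $r$ colors, so $\chi(R\setminus t_2)\leq r$). The conclusion splits into two cases: either $\chi(R\setminus S)\leq r-1$, which is exactly the first alternative we want, or $R$ contains a $k$-hyperhole $H$ with $\chi(H)=r+1$.

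In the latter case I need to upgrade ``$R$ contains a hyperhole of chromatic number $r+1$'' to ``$\chi(R)=r+1$''. For the lower bound, $H$ is an induced subgraph of $R$, so $\chi(R)\geq \chi(H)=r+1$. For the matching upper bound, the coloring $c$ properly colors $R\setminus t_2$ with $r$ colors, and extending this by a single new color on $t_2$ yields a proper $(r+1)$-coloring of $R$; hence $\chi(R)\leq \chi(R\setminus t_2)+1\leq r+1$. Combining the two bounds gives $\chi(R)=r+1$, which is the second alternative in the statement.

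There is essentially no obstacle here: once Theorem~\ref{thm-ring-hyperhole} is in hand, the whole argument is a one-line corollary of Lemma~\ref{lemma-main-technical}(b) together with the trivial observation that $\chi(R)\leq \chi(R\setminus t_2)+1$. All the real work, including the $k$-hyperhole construction inside $R\setminus S$ that certifies $\chi(W)\geq r+1$ when $\chi(R\setminus S)\geq r$, is already done inside Lemma~\ref{lemma-main-technical}.
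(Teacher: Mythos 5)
Your proposal is correct and matches the paper's proof of Lemma~\ref{lemma-reduce-chi} essentially verbatim: the paper likewise uses Theorem~\ref{thm-ring-hyperhole} to satisfy the inductive hypothesis of Lemma~\ref{lemma-main-technical}(b), and then in the hyperhole case concludes via the chain $r+1 = \chi(H) \leq \chi(R) \leq \chi(R \setminus t_2)+1 \leq r+1$. No issues.
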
 
\begin{proof} 
By Theorem~\ref{thm-ring-hyperhole}, the hypotheses of Lemma~\ref{lemma-main-technical}(b) are satisfied, and we deduce that either $\chi(R \setminus S) \leq r-1$, or $R$ contains a $k$-hyperhole of chromatic number $r+1$. In the former case, we are done. So assume that $R$ contains a $k$-hyperhole $H$ such that $\chi(H) = r+1$. But then 
\begin{displaymath} 
\begin{array}{ccc cc cc cc} 
r+1 & = & \chi(H) & \leq & \chi(R) & \leq & \chi(R \setminus t_2)+1 & \leq & r+1, 
\end{array} 
\end{displaymath} 
and we deduce that $\chi(R) = r+1$. 
\end{proof}

\begin{lemma} \label{lemma-extend-optimal-coloring} There exists an algorithm with the following specifications: 
\begin{itemize} 
\item Input: All the following: 
\begin{itemize} 
\item an odd ring $R$, 
\item a ring partition $(X_1,\dots,X_k)$ of $R$, 
\item for all $i \in \{1,\dots,k\}$, an ordering $X_i = \{u_i^1,\dots,u_i^{|X_i|}\}$ of $X_i$ such that $X_i \subseteq N_R[u_i^{|X_i|}] \subseteq \dots \subseteq N_R[u_i^1] = X_{i-1} \cup X_i \cup X_{i+1}$, 
\item a proper coloring $c$ of $R \setminus u_2^{|X_2|}$; 
\end{itemize} 
\item Output: A proper coloring of $R$ that uses at most $\max\{\chi(R),r\}$ colors, where $r$ is the number of colors used by the input coloring $c$ of $R \setminus u_2^{|X_2|}$;\footnote{Thus, the algorithm outputs a proper coloring of $R$ that is either optimal or uses no more colors than the input coloring $c$ of $R \setminus u_2^{|X_2|}$ does. Clearly, $\chi(R) \leq \chi(R \setminus t_2)+1 \leq r+1$. So, the output coloring of $R$ uses at most $\max\{\chi(R),r\} \leq r+1$ colors. Furthermore, if it uses exactly $r+1$ colors, then $\chi(R) = r+1$, and our output coloring of $R$ is optimal. However, if our output coloring of $R$ uses at most $r$ colors, then we do not know whether or not the coloring is optimal.} 
\item Running time: $O(n^5)$. 
\end{itemize} 
\end{lemma}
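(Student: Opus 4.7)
The strategy is to iteratively reduce the problem using Lemma~\ref{lemma-reduce-chi}. First, attempt a direct extension of $c$: if some color used by $c$ is absent from $c(N_R(u_2^{|X_2|}))$, assign that color to $u_2^{|X_2|}$ and return the resulting proper coloring of $R$, which uses at most $r$ colors. If this fails, invoke the algorithm from Lemma~\ref{lemma-unimprov-alg} (in $O(n^4)$ time) to replace $c$ by an unimprovable coloring $c'$ of $R \setminus u_2^{|X_2|}$ using no more colors than $c$, and retry the direct extension. If this still fails, every color used by $c'$ appears on $N_R(u_2^{|X_2|})$, which is the setting in which Lemma~\ref{lemma-reduce-chi} becomes useful.

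In that event, set $c_1 = c'(u_1^1)$ and $S = \{v \in V(R) \setminus \{u_2^{|X_2|}\} : c'(v) = c_1\}$; note $u_1^1 \in S$, so $|S| \geq 1$. The restriction $c' \upharpoonright (V(R) \setminus (S \cup \{u_2^{|X_2|}\}))$ is a proper coloring of $(R \setminus S) \setminus u_2^{|X_2|}$ using exactly $r-1$ colors, and we recursively invoke the extension algorithm on $R \setminus S$ (equipped with the inherited ring partition and orderings, in which $u_2^{|X_2|}$ is still the highest vertex of its clique) to obtain a proper coloring $\gamma$ of $R \setminus S$ using at most $\max\{\chi(R \setminus S),\, r-1\}$ colors.

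The decision step is driven entirely by the color count of $\gamma$. If $\gamma$ uses at most $r-1$ colors, we extend $\gamma$ to $R$ by assigning a single fresh color to the stable set $S$, producing a proper coloring of $R$ with at most $r$ colors, which is within the required bound. Otherwise $\gamma$ uses at least $r$ colors, forcing $\chi(R \setminus S) \geq r$; by Lemma~\ref{lemma-reduce-chi} this implies $\chi(R) = r+1$, so we discard $\gamma$ and instead output $c'$ extended by a fresh $(r+1)$-th color on $u_2^{|X_2|}$, which is proper and uses $r+1 = \chi(R) = \max\{\chi(R),\, r\}$ colors.

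The running-time recurrence is $T(n) \leq T(n-1) + O(n^4)$, since $|S| \geq 1$ and the non-recursive work per invocation is dominated by Lemma~\ref{lemma-unimprov-alg}; this solves to $T(n) = O(n^5)$. The principal obstacle is that the recursion presumes its input is an odd ring, whereas $R \setminus S$ need not itself be a ring. By Lemma~\ref{lemma-ring-simplicial}, in that case $R \setminus S$ contains a simplicial vertex, and we handle this by iteratively peeling off simplicial vertices with Lemma~\ref{lemma-simplicial-list-alg} until what remains is either an odd $k$-ring (on which the recursion applies) or a chordal graph (which we color optimally via a simplicial elimination ordering), and then reinserting the peeled vertices greedily, using Lemma~\ref{lemma-simplicial-chi} to confirm that this neither inflates the color count beyond $\max\{\chi(R \setminus S),\, r-1\}$ nor disturbs the decision step above.
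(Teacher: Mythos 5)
Your proposal is correct and takes essentially the same route as the paper's proof: pass to an unimprovable coloring via Lemma~\ref{lemma-unimprov-alg}, delete the color class $S$ of $c(s_1)$, peel simplicial vertices of $R \setminus S$ with Lemma~\ref{lemma-simplicial-list-alg} and recurse on the leftover ring (or color the chordal remainder greedily), reinsert the peeled vertices greedily, and let Lemma~\ref{lemma-reduce-chi} decide between giving $S$ one fresh color or concluding $\chi(R)=r+1$ and placing a new color on $u_2^{|X_2|}$. The only detail you leave implicit is that $u_2^{|X_2|}$, while never in $S$, may itself be removed during the simplicial peeling, in which case the restricted coloring already colors the entire leftover ring and the recursive call is unnecessary (or must be applied after deleting the new top vertex of the second clique) --- exactly the one-line case distinction the paper makes.
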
 
\begin{proof} 
To simplify notation, for all $i \in \{1,\dots,k\}$, we set $s_i = u_i^1$ and $t_i = u_i^{|X_i|}$. So, $c$ is a proper coloring of $R \setminus t_2$. We may assume that $c$ uses the color set $\{1,\dots,r\}$, i.e.\ that $c:V(R) \setminus \{t_2\} \rightarrow \{1,\dots,r\}$. 

First, we update $c$ by running the algorithm from Lemma~\ref{lemma-unimprov-alg} and transforming it into an unimprovable coloring of $R \setminus t_2$ that uses only colors from the set $\{1,\dots,r\}$; this takes $O(n^4)$ time. We may assume that $c(s_1) = r$. Let $S = \{x \in V(R) \mid x \neq t_2, c(x) = r\}$.\footnote{Clearly, $S$ is a stable set.} Our first goal is to compute a proper coloring $\widetilde{c}$ of $R \setminus S$ that uses at most $\max\{\chi(R \setminus S),r-1\}$ colors. Then, depending on how many colors $\widetilde{c}$ uses, we will construct the needed coloring of $R$ by either extending the coloring $c$ of $R \setminus t_2$ or by extending the coloring $\widetilde{c}$ of $R \setminus S$. 

Let $v_1,\dots,v_t$ ($t \geq 0$) be a maximal sequence of pairwise distinct vertices of $R \setminus S$ such that for all $i \in \{1,\dots,t\}$, $v_i$ is simplicial in $R \setminus (S \cup \{v_1,\dots,v_{i-1}\})$; this sequence can be found in $O(n^3)$ time by running the algorithm from Lemma~\ref{lemma-simplicial-list-alg} with input $R \setminus S$. Suppose first that $V(R) \setminus S = \{v_1,\dots,v_t\}$. Then $v_1,\dots,v_t$ is a simplicial elimination ordering of $R \setminus S$, and we construct the coloring $\widetilde{c}$ by coloring $R \setminus S$ greedily using the ordering $v_t,\dots,v_1$. Clearly, $\widetilde{c}$ uses only $\omega(R \setminus S)$ colors, and we have that $\omega(R \setminus S) \leq \chi(R \setminus S) \leq \max\{\chi(R \setminus S),r-1\}$. 

Suppose now that $V(R) \setminus S \neq \{v_1,\dots,v_t\}$. Set $R' := R \setminus (S \cup \{v_1,\dots,v_t\})$. The maximality of $v_1,\dots,v_t$ guarantees that $R'$ has no simplicial vertices, and so it follows from Lemma~\ref{lemma-ring-simplicial} that $R'$ is a $k$-ring with ring partition $\Big(X_1 \cap V(R'),\dots,X_k \cap V(R')\Big)$. Let $c' = c \upharpoonright (V(R') \setminus \{t_2\})$, and note that $c'$ uses only colors from the set $\{1,\dots,r-1\}$. If $t_2 \notin V(R')$, then we set $c'' := c'$. On the other hand, if $t_2 \in V(R')$, then we make a recursive call to the algorithm with input $R'$ and $c'$,\footnote{We also input the ring partition $\Big(X_1 \cap V(R'),\dots,X_k \cap V(R')\Big)$ of $R'$, and well as the orderings of the sets $X_i \cap V(R')$ inherited from our input orderings of the sets $X_i$.} and we obtain a proper coloring $c''$ of $R'$ that uses at most $\max\{\chi(R'),r-1\}$ colors. So, in either case (i.e.\ independently of whether $t_2$ does or does not belong to $V(R')$), we have now obtained a proper coloring $c''$ of $R'$ that uses at most $\max\{\chi(R'),r-1\}$ colors. We now extend $c''$ to a proper coloring $\widetilde{c}$ of $R \setminus S$ by assigning colors greedily to the vertices $v_t,\dots,v_1$ (in that order). Note that the coloring $\widetilde{c}$ uses at most $\max\{\omega(R \setminus S),\chi(R'),r-1\} \leq \max\{\chi(R \setminus S),r-1\}$ colors. 

In either case,\footnote{That is: both in the case when $V(R) \setminus S = \{v_1,\dots,v_t\}$ and in the case when $V(R) \setminus S \neq \{v_1,\dots,v_t\}$.} we have constructed a proper coloring $\widetilde{c}$ of $R \setminus S$ that uses at most $\max\{\chi(R \setminus S),r-1\}$ colors. If $\widetilde{c}$ uses at most $r-1$ colors, then we extend $\widetilde{c}$ to a proper coloring of $R$ that uses at most $r$ colors by assigning the same new color to all the vertices of the stable set $S$; we then return this coloring of $R$, and we stop. Suppose now that the coloring $\widetilde{c}$ uses at least $r$ colors. Then $\chi(R \setminus S) \geq r$, and so Lemma~\ref{lemma-reduce-chi} implies that $\chi(R) = r+1$. We now extend the coloring $c$ of $R \setminus t_2$ to a proper coloring of $R$ by assigning color $r+1$ to the vertex $t_2$. Our coloring of $R$ uses at most $r+1 = \chi(R)$ colors,\footnote{So, in fact, our coloring of $R$ uses exactly $\chi(R)$ colors, and it is therefore optimal.} we return this coloring, and we stop. 

Clearly, the algorithm is correct. We make $O(n)$ recursive calls, and otherwise, the slowest step of the algorithm takes $O(n^4)$ time. Thus, the total running time of the algorithm is $O(n^5)$. 
\end{proof}

\begin{theorem} \label{thm-ring-col-alg} There exists an algorithm with the following specifications: 
\begin{itemize} 
\item Input: A graph $G$; 
\item Output: Either an optimal coloring of $G$, or the true statement that $G \notin \mathcal{R}_{\geq 4}$; 
\item Running time: $O(n^6)$. 
\end{itemize} 
\end{theorem}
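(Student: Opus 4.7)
The plan is to build a recursive algorithm that alternates a ``simplicial peel'' with single-vertex reductions inside the remaining ring. Given input $G$, first call Lemma~\ref{lemma-simplicial-list-alg} to produce a maximal sequence $v_1,\dots,v_t$ of pairwise distinct vertices such that each $v_i$ is simplicial in $G\setminus\{v_1,\dots,v_{i-1}\}$, and set $R = G\setminus\{v_1,\dots,v_t\}$. If $V(R)=\emptyset$, then $v_1,\dots,v_t$ is a simplicial elimination ordering of $G$, hence $G$ is chordal, and greedy coloring in the reverse order $v_t,\dots,v_1$ produces an optimal $\omega(G)$-coloring. Otherwise, $R$ has no simplicial vertex; by heredity of $\mathcal{R}_{\geq 4}$ (Lemma~\ref{lemma-RRk-hered}) and the definition of the class, $G\in\mathcal{R}_{\geq 4}$ would force $R$ to be a ring. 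Apply Lemma~\ref{lemma-detect-ring-ord} to $R$: if it answers ``not a ring'', return ``$G\notin\mathcal{R}_{\geq 4}$''; otherwise it supplies the length $k$, a ring partition $(X_1,\dots,X_k)$ of $R$, and orderings $X_i=\{u_i^1,\dots,u_i^{|X_i|}\}$ compatible with the definition of a ring.

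Next, color $R$ optimally. If $k$ is even, invoke Lemma~\ref{lemma-even-ring-col-alg}. If $k$ is odd, set $t_2=u_2^{|X_2|}$; when $|X_2|=1$, the graph $R\setminus t_2$ is chordal by Lemma~\ref{lemma-ring-chordal}(c), and when $|X_2|\ge 2$ it is a $k$-ring with ring partition $(X_1, X_2\setminus\{t_2\}, X_3,\dots,X_k)$ (one checks the four conditions of Lemma~\ref{lemma-ring-char} directly). In either case $R\setminus t_2\in\mathcal{R}_{\geq 4}$, so the algorithm may recurse on $R\setminus t_2$, returning an optimal coloring $c$ using $r=\chi(R\setminus t_2)$ colors. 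Feeding $R$, its ring data, and $c$ into Lemma~\ref{lemma-extend-optimal-coloring} then yields a proper coloring of $R$ using at most $\max\{\chi(R),r\}=\chi(R)$ colors, which is optimal.

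Having optimally colored $R$, extend the coloring to $G$ by processing $v_t,\dots,v_1$ in that order and greedily assigning each $v_i$ a color missing on its clique-neighborhood; such a color is always available since $|N(v_i)|\le \omega(G)-1$ in the current subgraph. By iterated application of Lemma~\ref{lemma-simplicial-chi}, $\chi(G)=\max\{\omega(G),\chi(R)\}$, so the resulting coloring is optimal. Correctness follows by induction on $|V(G)|$; in the ``not a ring'' case the output is justified by the observation in the first paragraph.

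For the running time, the recursion depth is $O(n)$ because each recursive call strictly reduces $|V(G)|$ (by at least the vertex $t_2$). The per-level costs are $O(n^3)$ for the simplicial peel, $O(n^2)$ for ring detection, $O(n^3)$ for coloring an even ring, $O(n^2)$ for the greedy re-extension to $G$, and $O(n^5)$ for the call to Lemma~\ref{lemma-extend-optimal-coloring}, which dominates. Summing across levels gives $O(n\cdot n^5)=O(n^6)$. The main obstacle is encapsulated in Lemma~\ref{lemma-extend-optimal-coloring} (itself a corollary of the technical Lemma~\ref{lemma-main-technical} and Theorem~\ref{thm-ring-hyperhole}): once we may safely remove $t_2$, color optimally, and reinsert $t_2$ without blowing up the palette, the algorithm is just a straightforward divide-and-conquer along the simplicial peel and successive removals of $t_2$.
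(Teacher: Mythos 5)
Your proposal is correct and follows essentially the same route as the paper's proof: a simplicial peel via Lemma~\ref{lemma-simplicial-list-alg}, ring detection via Lemma~\ref{lemma-detect-ring-ord}, the even-ring algorithm of Lemma~\ref{lemma-even-ring-col-alg}, and in the odd case a recursive call on $R \setminus t_2$ (justified by heredity of $\mathcal{R}_{\geq 4}$) followed by Lemma~\ref{lemma-extend-optimal-coloring}, with greedy re-extension through the simplicial vertices and the same $O(n)\cdot O(n^5)=O(n^6)$ accounting. The only difference is cosmetic: you merge the peel and the ring-handling into one recursion level, whereas the paper recurses on the peeled graph and handles the ring at the next level.
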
 
\begin{proof} 
First, we form a maximal sequence $v_1,\dots,v_t$ ($t \geq 0$) of pairwise distinct vertices of $G$ such that, for all $i \in \{1,\dots,t\}$, $v_i$ is simplicial in $G \setminus \{v_1,\dots,v_{i-1}\}$; this can be done in $O(n^3)$ time by running the algorithm from Lemma~\ref{lemma-simplicial-list-alg} with input $G$. 

Suppose first that $t \geq 1$. If $V(G) = \{v_1,\dots,v_t\}$, so that $v_1,\dots,v_t$ is a simplicial elimination ordering of $G$, then we color $G$ greedily in $O(n^2)$ time using the ordering $v_t,\dots,v_1$; clearly, the resulting coloring of $G$ is optimal, we return this coloring, and we stop. So assume that $V(G) \setminus \{v_1,\dots,v_t\} \neq \emptyset$. We then make a recursive call to the algorithm with input $G \setminus \{v_1,\dots,v_t\}$. If we obtain an optimal coloring of $G \setminus \{v_1,\dots,v_t\}$, then we greedily extend this coloring to an optimal coloring of $G$ using the ordering $v_t,\dots,v_1$, we return this coloring of $G$, and we stop. On the other hand, if the algorithm returns the statement that $G \setminus \{v_1,\dots,v_t\} \notin \mathcal{R}_{\geq 4}$, then we return the answer that $G \notin \mathcal{R}_{\geq 4}$ (this is correct because $R_{\geq 4}$ is hereditary), and we stop. 

From now on, we assume that $t = 0$. Thus, $G$ contains no simplicial vertices, and so by the definition of $\mathcal{R}_{\geq 4}$, either $G$ is a ring, or $G \notin \mathcal{R}_{\geq 4}$. We now run the algorithm from Lemma~\ref{lemma-detect-ring-ord} input $G$; this takes $O(n^2)$ time. If the algorithm returns the answer that $G$ is not a ring, then we return the answer that $G \notin \mathcal{R}_{\geq 4}$. So assume the algorithm returned the statement that $G$ is a ring, along with the length $k$ and ring partition $(X_1,\dots,X_k)$ of $G$, and for each $i \in \{1,\dots,k\}$ an ordering $X_i = \{u_i^1,\dots,u_i^{|X_i|}\}$ of $X_i$ such that $X_i \subseteq N_G[u_i^{|X_i|}] \subseteq \dots \subseteq N_G[u_i^1] = X_{i-1} \cup X_i \cup X_{i+1}$. If $k$ is even, then we obtain an optimal coloring of $G$ in $O(n^3)$ time by running the algorithm from Lemma~\ref{lemma-even-ring-col-alg}, we return this coloring, and we stop. So from now on, we assume that $k$ is odd, so that $G$ is an odd ring. For each $i \in \{1,\dots,k\}$, we set $t_i = u_i^{|X_i|}$. Since $G$ is a ring, Lemma~\ref{lemma-RRk-hered} guarantees that $G \in \mathcal{R}_{\geq 4}$, and since $\mathcal{R}_{\geq 4}$ is hereditary, we see that $G \setminus t_2$ belongs to $\mathcal{R}_{\geq 4}$. We now obtain an optimal coloring $c$ of $G \setminus t_2$ by making a recursive call to the algorithm. We then call the algorithm from Lemma~\ref{lemma-extend-optimal-coloring} with input $G$ and $c$,\footnote{We also input the ring partition $(X_1,\dots,X_k)$ of the ring $G$, as well as our orderings of the sets $X_1,\dots,X_k$.} and we obtain a proper coloring of $G$ that uses at most $\max\{\chi(G),\chi(G \setminus t_2)\} = \chi(G)$ colors;\footnote{Clearly, this coloring of $G$ is optimal.} this takes $O(n^5)$ time. We now return this coloring of $G$, and we stop. 

Clearly, the algorithm is correct. We make $O(n)$ recursive calls to the algorithm, and otherwise, the slowest step of the algorithm takes $O(n^5)$ time. Thus, the total running time of the algorithm is $O(n^6)$. 
\end{proof} 

We complete this section by giving a polynomial-time coloring algorithm for graphs in $\mathcal{G}_{\text{T}}$. 

\begin{theorem} \label{thm-GT-col-alg} There exists an algorithm with the following specifications: 
\begin{itemize} 
\item Input: A graph $G$; 
\item Output: Either an optimal coloring of $G$, or the true statement that $G \notin \mathcal{G}_{\text{T}}$; 
\item Running time: $O(n^7)$. 
\end{itemize} 
\end{theorem}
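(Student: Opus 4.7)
The plan is to invoke the decomposition theorem (Theorem~\ref{thm-GT-decomp}) together with the ring coloring algorithm (Theorem~\ref{thm-ring-col-alg}) and a classical clique-cutset decomposition of $G$. First I would run Tarjan's clique-cutset decomposition on $G$, producing in $O(nm)$ time a list of $O(n)$ atoms $A_1,\dots,A_s$, each arising as $H[D\cup C]$ for some residual graph $H$, component $D$ of $H\setminus C$, and clique-cutset $C$ used at some step of the decomposition, and each having no clique-cutset of its own. Since $\mathcal{G}_{\text{T}}$ is hereditary, if $G\in\mathcal{G}_{\text{T}}$ then each $A_i\in\mathcal{G}_{\text{T}}$, and Theorem~\ref{thm-GT-decomp} forces $A_i$ to be a complete graph, a ring, or a $7$-hyperantihole.

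Next I would process each atom separately: check whether it is a complete graph (trivial), a ring (via Lemma~\ref{lemma-detect-ring-ord}), or a $7$-hyperantihole (recognizable in polynomial time by testing for the required partition into $7$ nonempty cliques with the prescribed complete/anticomplete pattern). If some atom fits none of these three types, return the true statement that $G\notin\mathcal{G}_{\text{T}}$. Otherwise, compute an optimal coloring of each atom: complete graphs are immediate, rings are colored in $O(|V(A_i)|^6)$ time by Theorem~\ref{thm-ring-col-alg}, and a $7$-hyperantihole $A$ satisfies $\alpha(A)\le 2$, since a stable set contains at most one vertex per part $X_i$ and any two parts intersecting the stable set must be anticomplete (i.e.\ cyclically consecutive in $C_7$), but $C_7$ is triangle-free. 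Consequently $\chi(A)=|V(A)|-\mu(\overline{A})$, where $\mu$ denotes maximum matching size, and an optimal coloring of $A$ is read off from a maximum matching of $\overline{A}$, computable in $O(|V(A)|^{2.5})$ time (or $O(|V(A)|^3)$ for simplicity).

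Finally I would merge the per-atom optimal colorings into an optimal coloring of $G$ by walking the clique-cutset decomposition tree bottom-up. This uses the standard fact that if $C$ is a clique-cutset of a graph $H$ with $H\setminus C$ having components $D_1,\dots,D_p$, then $\chi(H)=\max_j\chi(H[D_j\cup C])$, and given optimal colorings of the blocks $H[D_j\cup C]$ one may permute colors on each block so that the colorings agree on $C$, yielding an optimal coloring of $H$ that uses at most $\chi(H)$ colors.

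For the running time, the clique-cutset decomposition is $O(nm)=O(n^3)$, structural recognition is $O(|V(A_i)|^2)$ per atom, ring coloring dominates at $O(|V(A_i)|^6)$ per atom, and the merge at each clique-cutset is $O(|V(H)|\cdot|C|)$ time; summed over the $O(n)$ atoms this gives the claimed $O(n^7)$ bound. The main obstacle I anticipate is the careful bookkeeping of the color-permutation step across the entire decomposition tree (so that the overhead really is linear per atom), rather than any of the individual coloring subroutines, which are either off-the-shelf or a direct consequence of Theorem~\ref{thm-ring-col-alg}.
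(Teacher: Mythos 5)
Your proposal is correct and follows essentially the same route as the paper's proof: a Tarjan clique-cutset decomposition into atoms, Theorem~\ref{thm-GT-decomp} to classify each atom, Theorem~\ref{thm-ring-col-alg} for the ring atoms, a maximum matching in the complement for the stability-number-two atoms, and permuting colors on the cutset cliques to merge, with the same $O(n^7)$ accounting. The only cosmetic difference is that the paper never recognizes $7$-hyperantiholes explicitly: after the coloring algorithm for $\mathcal{R}_{\geq 4}$ fails on an atom, it simply tests whether $\alpha \leq 2$ by checking all triples (which covers both complete graphs and hyperantiholes at once) and declares $G \notin \mathcal{G}_{\text{T}}$ otherwise, whereas you propose a direct structural recognition step that works equally well.
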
 
\begin{proof} 
We first check whether $G$ has a clique-cutset, and if so, we obtain a clique-cut-partition $(A,B,C)$ of $G$ such that $G[A \cup C]$ does not admit a clique-cutset; this can be done in $O(n^3)$ time by running the algorithm from~\cite{Tarjan} with input $G$. If we obtained the answer that $G$ does not admit a clique-cutset, then we set $A = V(G)$, $B = \emptyset$, and $C = \emptyset$. On the other hand, if we obtained $(A,B,C)$, then we make a recursive call to the algorithm with input $G[B \cup C]$; if we obtained the answer that $G[B \cup C] \notin \mathcal{G}_{\text{T}}$, then we return the answer that $G \notin \mathcal{G}_{\text{T}}$ (this is correct because $\mathcal{G}_{\text{T}}$ is hereditary), and we stop. So from now on, we assume that one of the following holds: 
\begin{itemize} 
\item $B = C = \emptyset$; 
\item $(A,B,C)$ is a clique-cut-partition of $G$, and we recursively obtained an optimal coloring $c_B$ of $G[B \cup C]$. 
\end{itemize} 
In either case, we also have that $G[A \cup C]$ does not admit a clique-cutset. 

We now run the algorithm from Theorem~\ref{thm-ring-col-alg} with input $G[A \cup C]$; this takes $O(n^6)$ time. The algorithm either returns an optimal coloring $c_A$ of $G[A \cup C]$, or it returns the answer that $G[A \cup C] \notin \mathcal{R}_{\geq 4}$. If the algorithm returned the answer that $G[A \cup C] \notin \mathcal{R}_{\geq 4}$, then our goal is to either produce an optimal coloring $c_A$ of $G[A \cup C]$ in another way, or to determine that $G \notin \mathcal{G}_{\text{T}}$. In this case (i.e.\ if the algorithm returned the answer that $G[A \cup C] \notin \mathcal{R}_{\geq 4}$), we proceed as follows. Since $\mathcal{R}_{\geq 4}$ contains all rings (by Lemma~\ref{lemma-RRk-hered}), we have that $G[A \cup C]$ is not a ring. Recall that $G[A \cup C]$ does not admit a clique-cutset. Thus, Theorem~\ref{thm-GT-decomp} implies that either $G[A \cup C]$ is a complete graph, or $G[A \cup C]$ is a 7-hyperantihole, or $G[A \cup C] \notin \mathcal{G}_{\text{T}}$ (in which case, $G \notin \mathcal{G}_{\text{T}}$, since $\mathcal{G}_{\text{T}}$ is hereditary). Clearly, complete graphs have stability number one, and hyperantiholes have stability number two. Thus, either $\alpha(G[A \cup C]) \leq 2$ or $G \notin \mathcal{G}_{\text{T}}$. We determine whether $\alpha(G[A \cup C]) \leq 2$ by examining all triples of vertices in $G[A \cup C]$; this takes $O(n^3)$ time. If $\alpha(G[A \cup C]) \geq 3$, then we return the answer that $G \notin \mathcal{G}_{\text{T}}$, and we stop. So suppose that $\alpha(G[A \cup C]) \leq 2$. This means that each color class of a proper coloring of $G[A \cup C]$ is of size at most two, and that, taken together, color classes of size exactly two correspond to a matching of $\overline{G}[A \cup C]$ (the complement of $G[A \cup C]$). So, we form the graph $\overline{G}[A \cup C]$ in $O(n^2)$ time, and we find a maximum matching $M$ in $\overline{G}[A \cup C]$ in $O(n^4)$ time by running the algorithm from~\cite{matching}. We now color $G[A \cup C]$ as follows: each member of $M$ is a two-vertex color class,\footnote{By construction, members of $M$ are edges of $\overline{G}[A \cup C]$; consequently, members of $M$ are stable sets of size two in $G[A \cup C]$.} and each vertex in $A \cup C$ that is not an endpoint of any member of $M$ forms a one-vertex color class.\footnote{So, in total, we used $|M|+(|A \cup C|-2|M|) = |A \cup C|-|M|$ colors.} This produces an optimal coloring $c_A$ of $G[A \cup C]$. 

So from now on, we may assume that we have obtained an optimal coloring $c_A$ of $G[A \cup C]$. If $B = C = \emptyset$, then $c_A$ is in fact an optimal coloring of $G$; in this case, we return $c_A$, and we stop. So assume that $B \cup C \neq \emptyset$. Then we have already obtained an optimal coloring $c_B$ of $G[B \cup C]$. After possibly renaming colors, we may assume that the color set used by one of $c_A,c_B$ is included in the color set used by the other one. Now, $C$ is a clique in $G$, and so $c_A$ assigns a different color to each vertex of $C$, and the same is true for $c_B$. So, after possibly permuting colors, we may assume that $c_A$ and $c_B$ agree on $C$, i.e.\ that $c_A \upharpoonright C = c_B \upharpoonright C$. Now $c := c_A \cup c_B$ is an optimal coloring of $G$. We return $c$, and we stop. 

Clearly, the algorithm is correct. We make $O(n)$ recursive calls to the algorithm, and otherwise, the slowest step takes $O(n^6)$ time. Thus, the total running time of the algorithm is $O(n^7)$. 
\end{proof}

\section{Computing the chromatic number of a ring} \label{sec:chi}

In section~\ref{sec:col}, we gave an $O(n^6)$ time coloring algorithm for graphs in $\mathcal{R}_{\geq 4}$ (see Theorem~\ref{thm-ring-col-alg}),\footnote{Recall that, by Lemma~\ref{lemma-RRk-hered}, the class $\mathcal{R}_{\geq 4}$ contains all rings.} and we also gave an $O(n^7)$ time coloring algorithm for graphs in $\mathcal{G}_{\text{T}}$ (see Theorem~\ref{thm-GT-col-alg}). These algorithms produce optimal colorings of input graphs from the specified classes; however, for some applications, it is enough to compute the chromatic number, without actually finding an optimal coloring of the input graph. In this section, we use Corollary~\ref{cor-ring-chi-formula} and Lemma~\ref{lemma-omega-ring} to construct an $O(n^3)$ time algorithm that computes the chromatic number of a graph in $\mathcal{R}_{\geq 4}$ (see Theorem~\ref{thm-ring-chi-alg}), and using this result, we construct an $O(n^5)$ time algorithm that computes the chromatic number of a graph in $\mathcal{G}_{\text{T}}$ (see Theorem~\ref{thm-GT-chi-alg}). 

First, we give an $O(n^3)$ time algorithm that computes a maximum hyperhole in a ring (see Lemma~\ref{lemma-normal-hyperhole-alg}).\footnote{We remind the reader that, by Lemma~\ref{lemma-ring-chordal}(b), every hyperhole in a ring is of the same length as that ring. As usual, a {\em maximum hyperhole} in a ring is a hyperhole of maximum size in that ring.} We begin with some terminology and notation. Let $k \geq 4$ be an integer, let $R$ be a $k$-ring with ring partition $(X_1,\dots,X_k)$, and for all $i \in \{1,\dots,k\}$, let $X_i = \{u_i^1,\dots,u_i^{|X_i|}\}$ be an ordering of $X_i$ such that $X_i \subseteq N_G[u_i^{|X_i|}] \subseteq \dots \subseteq N_G[u_i^1] = X_{i-1} \cup X_i \cup X_{i+1}$, as in the definition of a ring. Let $H$ be a hyperhole in $R$. By Lemma~\ref{lemma-ring-chordal}, the hyperhole $H$ is of length $k$, and it intersects each of the sets $X_1,\dots,X_k$. For all $i \in \{1,\dots,k\}$, let $\ell_i = \max\{\ell \in \{1,\dots,|X_i|\} \mid u_i^{\ell} \in V(H)\}$ and $Y_i = \{u_i^1,\dots,u_i^{\ell_i}\}$. Finally, let $\widetilde{H} = R[Y_1 \cup \dots \cup Y_k]$ and $C_H = \{u_1^{\ell_1},\dots,u_k^{\ell_k}\}$. Clearly, $\widetilde{H}$ is a hyperhole, with $V(H)\subseteq V(\widetilde{H})$. Furthermore, $C_H$ induces a hole in $R$, and it uniquely determines $\widetilde{H}$. We say that $H$ is \emph{normal} in $R$ if $H = \widetilde{H}$. Clearly, any maximal hyperhole (and therefore, any hyperhole of maximum size) in $R$ is normal. Thus, to find a maximum hyperhole in an input ring, we need only consider normal hyperholes in that ring.

\begin{lemma} \label{lemma-normal-hyperhole-alg} There exists an algorithm with the following specifications: 
\begin{itemize} 
\item Input: A graph $R$; 
\item Output: Either a maximum hyperhole $H$ in $R$, or the true statement that $R$ is not a ring; 
\item Running time: $O(n^3)$. 
\end{itemize} 
\end{lemma}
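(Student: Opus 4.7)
The plan is as follows. First I would invoke the algorithm of Lemma~\ref{lemma-detect-ring-ord} on the input graph $R$ in $O(n^2)$ time; if it reports that $R$ is not a ring, I return that statement. Otherwise I obtain the length $k$, a ring partition $(X_1,\dots,X_k)$, and, for each $i$, an ordering $X_i=\{u_i^1,\dots,u_i^{|X_i|}\}$ with $X_i\subseteq N_R[u_i^{|X_i|}]\subseteq\cdots\subseteq N_R[u_i^1]=X_{i-1}\cup X_i\cup X_{i+1}$. By Lemma~\ref{lemma-ring-chordal}(b), every hyperhole in $R$ has length $k$, and by the discussion preceding the lemma every maximal (hence every maximum) hyperhole in $R$ is normal. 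Thus a maximum hyperhole corresponds to a choice of indices $(\ell_1,\dots,\ell_k)\in\{1,\dots,|X_1|\}\times\cdots\times\{1,\dots,|X_k|\}$ such that $u_i^{\ell_i}u_{i+1}^{\ell_{i+1}}\in E(R)$ for every $i$ (indices mod $k$), and the size of the corresponding normal hyperhole is $\sum_{i=1}^k\ell_i$. So the task reduces to maximizing $\sum_{i=1}^k\ell_i$ subject to these consecutive-adjacency constraints.

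The key structural observation is that, because $N_R[u_i^1]\supseteq\cdots\supseteq N_R[u_i^{|X_i|}]$ and similarly in $X_{i+1}$, the set of pairs $(a,b)$ with $u_i^au_{i+1}^b\in E(R)$ is downward-closed in both coordinates. Consequently there is a non-increasing function $f_i\colon\{1,\dots,|X_i|\}\to\{0,1,\dots,|X_{i+1}|\}$ such that $u_i^au_{i+1}^b\in E(R)$ if and only if $b\le f_i(a)$; all the $f_i$'s can be built in $O(n^2)$ time by inspecting the adjacency matrix. The problem becomes: maximize $\sum_{i=1}^k\ell_i$ subject to $\ell_{i+1}\le f_i(\ell_i)$ for $i=1,\dots,k-1$ and the wrap-around constraint $\ell_1\le f_k(\ell_k)$.

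I resolve the cyclic coupling by enumerating $\ell_1\in\{1,\dots,|X_1|\}$ and solving the resulting non-cyclic problem by dynamic programming along $X_2\to X_3\to\cdots\to X_k$. With $\ell_1$ fixed, define
\[
M_k(\ell)=\begin{cases}\ell,&\text{if }f_k(\ell)\ge\ell_1,\\ -\infty,&\text{otherwise,}\end{cases}\qquad M_i(\ell)=\ell+\max_{\ell'\le f_i(\ell)}M_{i+1}(\ell')\quad(2\le i\le k-1).
\]
Then the largest hyperhole whose highest vertex in $X_1$ is $u_1^{\ell_1}$ has size $\ell_1+\max_{\ell_2\le f_1(\ell_1)}M_2(\ell_2)$. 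Computing every $M_i$ (and the final maximum) naively takes $O(n^2)$ time per choice of $\ell_1$, so the whole enumeration runs in $O(|X_1|)\cdot O(n^2)=O(n^3)$ time. I would return the tuple achieving the overall maximum, reconstruct the corresponding normal hyperhole $R[Y_1\cup\cdots\cup Y_k]$ with $Y_i=\{u_i^1,\dots,u_i^{\ell_i}\}$, and output it.

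The main obstacle, as indicated, is the cyclic wrap-around: without it, a single forward pass suffices, but with it one must commit to some variable in advance. Fixing $\ell_1$ and folding the wrap-around constraint $\ell_1\le f_k(\ell_k)$ into the base case $M_k$ of the DP handles this cleanly; correctness then follows because every tuple $(\ell_1,\dots,\ell_k)$ consistent with the adjacency constraints is considered exactly once, while feasibility of the normal hyperhole induced by an optimal tuple is guaranteed by Lemma~\ref{lemma-ring-char}. The stated $O(n^3)$ running time is dominated by the $|X_1|\le n$ runs of the DP.
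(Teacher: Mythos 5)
Your proposal is correct and follows essentially the same strategy as the paper: reduce to normal hyperholes, enumerate the highest vertex used from $X_1$, and for each such choice solve the resulting acyclic optimization over $X_2,\dots,X_k$ in $O(n^2)$ time, for $O(n^3)$ overall. The only difference is the implementation of the inner step: the paper encodes it as a minimum-weight path in an auxiliary weighted digraph solved by Dijkstra (via the identity $|V(H_j)| = |V(R)| - \tfrac{1}{2}w(P_j)$), whereas you run an equivalent layered dynamic program directly, folding the wrap-around constraint into the base case.
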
 
\begin{proof} 
We first run the algorithm from Lemma~\ref{lemma-detect-ring-ord} with input $R$; this takes $O(n^2)$ time. If the algorithm returns the answer that $R$ is not a ring, then we return that answer as well and stop. So assume the algorithm returned the statement that $R$ is a ring, along with the length $k$ and ring partition $(X_1,\dots,X_k)$ of $R$, and for each $i \in \{1,\dots,k\}$ an ordering $X_i = \{u_i^1,\dots,u_i^{|X_i|}\}$ of $X_i$ such that $X_i \subseteq N_R[u_i^{|X_i|}] \subseteq \dots \subseteq N_R[u_i^1] = X_{i-1} \cup X_i \cup X_{i+1}$. 

For each $j \in \{1,\dots,|X_1|\}$, we will find a normal hyperhole $H_j$ of $R$ such that $V(H_j) \cap X_1 = \{u_1^1,\dots,u_1^j\}$, and subject to that, $|V(H_j)|$ is maximum. We will then compare the sizes of all the $H_j$'s (with $1 \leq j \leq |X_1|$), and we will return an $H_j$ of maximum size. 

We begin by constructing an auxiliary weighted digraph $(D,w)$,\footnote{The weight function $w$ will assign nonnegative interger weights to the arcs of $D$.} as follows. First, we construct a set of $|X_1|$ new vertices, $X_{k+1} = \{u_{k+1}^1,\dots,u_{k+1}^{|X_1|}\}$, with $X_{k+1} \cap V(R) = \emptyset$.\footnote{So, $|X_{k+1}| = |X_1|$, and we think of $X_{k+1}$ as a copy of $X_1$.} Let ${D}$ be the digraph with vertex set $V({D}) = V(R) \cup X_{k+1}$ and arc set:
\begin{displaymath} 
\begin{array}{rcl} 
A({D}) & = & \bigcup\limits_{i=1}^{k-1} \Big(\Big\{\overrightarrow{x y} \mid x\in X_i, \ y\in X_{i+1}, \ xy \in E(R)\Big\} 
\\
& & \phantom{iiiii} \cup \Big\{\overrightarrow{x u_{k+1}^{\ell}} \mid x \in X_k, \ x u_1^{\ell} \in
E(R), \ 1 \leq \ell \leq |X_1|\Big\}\Big). 
\end{array} 
\end{displaymath} 
Finally, for every arc $\overrightarrow{u_i^pu_{i+1}^q}$ in $A({D})$, with $i \in \{1, \dots, k\}$, $p \in \{1, \dots, |X_i|\}$, and $q \in \{1, \dots, |X_{i+1}|\}$, we set $w(\overrightarrow{u_i^pu_{i+1}^q}) = (|X_i|-p)+(|X_{i+1}|-q)$.

Now, for a fixed $j \in \{1,\dots,|X_1|\}$, we find the hyperhole $H_j$ as follows. Let ${P_j}$ be a minimum weight directed path between $u_1^j$ and $u_{k+1}^j$ in the weighted digraph $({D}, w)$.  Such a path can be found in $O(n^2)$ time using Dijkstra's algorithm~\cite{D59, Sch}. For each $i \in \{1, \dots, k\}$, let $\ell_{i, j} \in \{1, \dots, |X_i|\}$ be the (unique) index such that $u_i^{\ell_{i, j}} \in V({P_j})$, and let $Y_{i, j} = \{u_i^{\ell} \mid 1 \leq \ell \leq \ell_{i, j}\}$. Finally, let $H_j = R[Y_{1, j}\cup\dots\cup Y_{k, j}]$.  Clearly, $H_j$ is a normal hyperhole of $R$, and $V(H_j) \cap X_1 = \{u_1^1,\dots,u_1^j\}$. Moreover, we have that 
\begin{displaymath} 
\begin{array}{rcl} 
|V(H_j)| & = & \sum\limits_{i=1}^k |Y_{i, j}| 
\\
\\
& = & \sum\limits_{i=1}^k \ell_{i, j} 
\\
\\
& = & |V(R)|-\sum\limits_{i=1}^k (|X_i|-\ell_{i, j}) 
\\
\\
& = & |V(R)|-\frac{1}{2}w({P_j}), 
\end{array} 
\end{displaymath} 
and so the fact that $P_j$ has minimum weight implies that $H_j$ has maximum size among all hyperholes $H$ in $R$ that satisfy $V(H) \cap X_1 = \{u_1^1,\dots,u_1^j\}$. So, $H_j$ is the desired hyperhole for a given $j$. 

We now compare the sizes of the hyperholes $H_1,\dots,H_{|X_1|}$ (this takes $O(n^2)$ time), and we return one of maximum size. 

Clearly, the algorithm is correct. The total running time is $O(n^3)$, since computing $H_j$ (for fixed $j$) takes $O(n^2)$ time, and we do this for $O(n)$ values of $j$. 
\end{proof} 

We now give a polynomial-time algorithm that computes the chromatic number of graphs in $\mathcal{R}_{\geq 4}$. 

\begin{theorem} \label{thm-ring-chi-alg} There exists an algorithm with the following specifications: 
\begin{itemize} 
\item Input: A graph $G$; 
\item Output: Either $\chi(G)$, or the true statement that $G \notin \mathcal{R}_{\geq 4}$; 
\item Running time: $O(n^3)$. 
\end{itemize} 
\end{theorem}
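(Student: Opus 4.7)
The plan is to combine Lemma~\ref{lemma-simplicial-list-alg}'s peeling procedure with the ring-specific tools from Corollary~\ref{cor-ring-chi-formula}, Lemma~\ref{lemma-omega-ring}, and Lemma~\ref{lemma-normal-hyperhole-alg}, and then lift the chromatic number of the resulting ``ring core'' back to $G$ via Lemma~\ref{lemma-simplicial-chi}.

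First, run the algorithm from Lemma~\ref{lemma-simplicial-list-alg} on $G$ to obtain a maximal sequence $v_1,\dots,v_t$ of successively simplicial vertices, and set $R = G \setminus \{v_1,\dots,v_t\}$; along the way, record $d_i := |N_{G_{i-1}}[v_i]|$, where $G_i = G \setminus \{v_1,\dots,v_i\}$. This phase costs $O(n^3)$. Now case-split on $R$. If $V(R) = \emptyset$, then $v_1,\dots,v_t$ is a simplicial elimination ordering, $G$ is chordal and therefore perfect, and iterating Lemma~\ref{lemma-simplicial-chi} yields $\chi(G) = \omega(G) = \max_i d_i$, which we return. Otherwise $R$ has no simplicial vertex (by maximality of the sequence), so apply Lemma~\ref{lemma-detect-ring-ord} to $R$: if $R$ is not a ring, then Lemma~\ref{lemma-RRk-description} together with the definition of $\mathcal{R}_{\geq 4}$ certifies that $G \notin \mathcal{R}_{\geq 4}$ (any graph in $\mathcal{R}_{\geq 4}$ must, after maximal simplicial peeling, leave either nothing or a ring), so we output that statement.

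The remaining case is that $R$ is a $k$-ring for some $k \geq 4$. By Lemma~\ref{lemma-ring-chordal}(d), $R \in \mathcal{G}_{\text{T}}$, so Lemma~\ref{lemma-omega-ring} computes $\omega(R)$ in $O(n^3)$ time; and Lemma~\ref{lemma-normal-hyperhole-alg} produces a maximum $k$-hyperhole $H$ in $R$, also in $O(n^3)$ time. Since every $k$-hyperhole in $R$ has stability number $\lfloor k/2 \rfloor$ and the expression $\lceil |V(H')|/\lfloor k/2 \rfloor \rceil$ is monotone in $|V(H')|$, Corollary~\ref{cor-ring-chi-formula} gives $\chi(R) = \max\{\omega(R), \lceil |V(H)|/\lfloor k/2 \rfloor \rceil\}$. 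Iterating Lemma~\ref{lemma-simplicial-chi} along $v_1,\dots,v_t$ produces $\chi(G) = \max\{\omega(G), \chi(R)\}$ and $\omega(G) = \max\{\omega(R), \max_i d_i\}$, so we return $\max\{\max_i d_i,\, \omega(R),\, \lceil |V(H)|/\lfloor k/2 \rfloor \rceil\}$. Every ingredient costs $O(n^3)$, giving the overall bound.

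The main (conceptual) obstacle is correctness of the ``not a ring'' branch: one must verify, through Lemma~\ref{lemma-RRk-description}, that a maximal simplicial peeling of any graph in $\mathcal{R}_{\geq 4}$ terminates either at the empty graph or at a ring, so that a non-empty, simplicial-free, non-ring remainder is a legitimate certificate of $G \notin \mathcal{R}_{\geq 4}$. Everything else is a direct assembly of previously established lemmas.
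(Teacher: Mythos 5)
Your proposal is correct and follows essentially the same route as the paper's proof: maximal simplicial peeling via Lemma~\ref{lemma-simplicial-list-alg}, declaring $G \notin \mathcal{R}_{\geq 4}$ if the simplicial-free remainder is nonempty and not a ring, computing $\chi$ of the ring core via Lemma~\ref{lemma-omega-ring}, Lemma~\ref{lemma-normal-hyperhole-alg}, and Corollary~\ref{cor-ring-chi-formula}, and lifting back through the peeled vertices with Lemma~\ref{lemma-simplicial-chi}. The only cosmetic differences (returning $\max_i d_i$ in the chordal case instead of greedily coloring, and citing Lemma~\ref{lemma-RRk-description} where the definition of $\mathcal{R}_{\geq 4}$ plus heredity suffices) do not affect correctness or the $O(n^3)$ bound.
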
  
\begin{proof} 
First, we form a maximal sequence $v_1,\dots,v_t$ ($t \geq 0$) of pairwise distinct vertices of $G$ such that, for all $i \in \{1,\dots,t\}$, $v_i$ is simplicial in $G \setminus \{v_1,\dots,v_{i-1}\}$; this can be done in $O(n^3)$ time by calling the algorithm from Lemma~\ref{lemma-simplicial-list-alg} with input $G$. 

Suppose first that $V(G) = \{v_1,\dots,v_t\}$, so that $v_1,\dots,v_t$ is a simplicial elimination ordering of $G$. In this case, we greedily color $G$ using the ordering $v_t,\dots,v_1$,\footnote{Clearly, this produces an optimal coloring of $G$.} we return the number of colors that we used, and we stop; this takes $O(n^2)$ time. 

From now on, we assume that $V(G) \neq \{v_1,\dots,v_t\}$, and we form the graph $R := G \setminus \{v_1,\dots,v_t\}$ in $O(n^2)$ time. The maximality of $v_1,\dots,v_t$ guarantees that $R$ contains no simplicial vertices, and so by the definition of $\mathcal{R}_{\geq 4}$, we have that either $R$ is a ring, or $G \notin \mathcal{R}_{\geq 4}$.\footnote{Indeed, suppose that $G \in \mathcal{R}_{\geq 4}$. Since $\mathcal{R}_{\geq 4}$ is hereditary, it follows that $R \in \mathcal{R}_{\geq 4}$. By the maximality of $v_1,\dots,v_t$, the graph $R$ has no simplicial vertices. So, by the definition of $\mathcal{R}_{\geq 4}$, we have that $R$ is a ring.} 

We now run the algorithm from Lemma~\ref{lemma-detect-ring} with input $R$; this takes $O(n^2)$ time. If the algorithm returns the answer that $R$ is not a ring, then we return the answer that $G \notin \mathcal{R}_{\geq 4}$, and we stop. So assume the algorithm returned the statement that $R$ is a ring, along with the length $k$ and ring partition $(X_1,\dots,X_k)$ of $R$. Next, we call the algorithm from Lemma~\ref{lemma-omega-ring}; this takes $O(n^3)$ time. Since $R$ is a ring, Lemma~\ref{lemma-ring-chordal}(d) guarantees that $R \in \mathcal{G}_{\text{T}}$, and so the algorithm returns $\omega(R)$. Next, we run the algorithm from Lemma~\ref{lemma-normal-hyperhole-alg} with input $R$; this takes $O(n^3)$ time. Since $R$ is a ring, we know that the algorithm returns a hyperhole $H$ of $R$ of maximum size; since $R$ is a $k$-ring, Lemma~\ref{lemma-ring-chordal}(b) guarantees that $H$ is a $k$-hyperhole. Set $r := \max\Big\{\omega(R),\Big\lceil \frac{|V(H)|}{\lfloor k/2 \rfloor} \Big\rceil\Big\}$; by Corollary~\ref{cor-ring-chi-formula}, we have that $\chi(R) = r$. 

If $t = 0$ (so that $G = R$), then we return $r$, and we stop. So assume that $t \geq 1$. For each $i \in \{1,\dots,t\}$, set $r_i = |N_G[v_i] \setminus \{v_1,\dots,v_{i-1}\}|$; computing the constants $r_1,\dots,r_t$ takes $O(n^2)$ time. An easy induction using Lemma~\ref{lemma-simplicial-chi} now establishes that $\chi(G) = \max\{r_1,\dots,r_t,r\}$. So, we return $\max\{r_1,\dots,r_t,r\}$, and we stop. 

Clearly, the algorithm is correct, and its running time is $O(n^3)$. 
\end{proof} 

We complete this section by showing how to compute the chromatic number of graphs in $\mathcal{G}_{\text{T}}$ in polynomial time. We remark that this algorithm is very similar to the one from Theorem~\ref{thm-GT-col-alg}, except that we use Theorem~\ref{thm-ring-chi-alg} instead of Theorem~\ref{thm-ring-col-alg}. Nevertheless, for the sake of completeness, we give all the details. 

\begin{theorem} \label{thm-GT-chi-alg} There exists an algorithm with the following specifications: 
\begin{itemize} 
\item Input: A graph $G$; 
\item Output: Either $\chi(G)$, or the true statement that $G \notin \mathcal{G}_{\text{T}}$; 
\item Running time: $O(n^5)$. 
\end{itemize} 
\end{theorem}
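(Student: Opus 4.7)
The plan is to closely mirror the structure of the algorithm from Theorem~\ref{thm-GT-col-alg}, but at every recursion we compute only the chromatic number of each piece (using Theorem~\ref{thm-ring-chi-alg}) rather than an actual coloring, and we combine the pieces using the standard clique-cutset identity
\[
\chi(G) \;=\; \max\{\chi(G[A\cup C]),\;\chi(G[B\cup C])\}
\]
for any clique-cut-partition $(A,B,C)$ of $G$. (This identity holds because colorings of the two sides can always be made to agree on the clique $C$ by permuting colors.) So no extra bookkeeping is needed to merge colorings; we simply take a maximum of two integers, which is why the running time drops by a factor of $n^2$ compared with Theorem~\ref{thm-GT-col-alg}.

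Concretely, given input $G$, I would first call Tarjan's clique-cutset algorithm from~\cite{Tarjan} in $O(n^3)$ time. If $G$ admits no clique-cutset, set $A:=V(G)$, $B:=C:=\emptyset$; otherwise obtain a clique-cut-partition $(A,B,C)$ with $G[A\cup C]$ atomic (has no clique-cutset), and recursively compute $\chi_B:=\chi(G[B\cup C])$, returning ``$G\notin\mathcal{G}_{\text{T}}$'' if the recursion does (heredity of $\mathcal{G}_{\text{T}}$). Next I would invoke the algorithm from Theorem~\ref{thm-ring-chi-alg} on $G[A\cup C]$ in $O(n^3)$ time. If it returns a number $\chi_A$, set this as the chromatic number of the atomic piece. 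If instead it reports $G[A\cup C]\notin\mathcal{R}_{\geq 4}$, then because $G[A\cup C]$ is atomic and not a ring (hence not in $\mathcal{R}_{\geq 4}$ by Lemma~\ref{lemma-RRk-hered}), Theorem~\ref{thm-GT-decomp} forces $G[A\cup C]$ to be either a complete graph, a $7$-hyperantihole, or outside $\mathcal{G}_{\text{T}}$. Since complete graphs and hyperantiholes both have stability number at most $2$, I test whether $\alpha(G[A\cup C])\le 2$ by examining all triples in $O(n^3)$ time; if not, return ``$G\notin\mathcal{G}_{\text{T}}$''. If yes, compute a maximum matching $M$ of $\overline{G[A\cup C]}$ using the $O(n^4)$ algorithm of~\cite{matching} and set $\chi_A:=|A\cup C|-|M|$, exactly as in the proof of Theorem~\ref{thm-GT-col-alg}, since color classes have size at most two and size-two classes correspond to a matching in the complement.

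Finally, return $\chi_A$ if $B=C=\emptyset$, and otherwise $\max\{\chi_A,\chi_B\}$, by the identity above. For correctness, each step invokes an algorithm proved correct elsewhere, and the only new ingredient is the clique-cutset identity for $\chi$, which I would state and justify in one sentence. For the running time, the recursion removes at least one vertex per call (the piece $G[B\cup C]$ is strictly smaller than $G$ whenever $B\ne\emptyset$), so there are $O(n)$ recursive calls; within a single call, the three dominant steps take $O(n^3)$, $O(n^3)$, and $O(n^4)$ time respectively, giving a total of $O(n^5)$.

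The only mildly subtle point I foresee is making sure the $O(n)$ recursion depth bound actually holds given Tarjan's decomposition; this is standard, since each recursive call is on a vertex set strictly smaller than $|V(G)|$, but I would verify it by the same summation argument as in Theorem~\ref{thm-GT-col-alg}. There is no real mathematical obstacle here — the work was done in Theorem~\ref{thm-ring-chi-alg}; the present theorem is a routine assembly of that result, Tarjan's clique-cutset decomposition, the structural Theorem~\ref{thm-GT-decomp}, and the maximum-matching-in-complement trick for stability-two graphs.
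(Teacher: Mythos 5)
Your proposal is correct and follows essentially the same route as the paper's own proof: Tarjan's clique-cutset decomposition with a recursive call on $G[B \cup C]$, Theorem~\ref{thm-ring-chi-alg} on the atomic piece $G[A \cup C]$, then Theorem~\ref{thm-GT-decomp} plus the stability-at-most-two test and a maximum matching in the complement, combined via the clique-cutset identity for $\chi$. The running-time analysis ($O(n)$ recursive calls, $O(n^4)$ dominant step) also matches the paper's.
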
 
\begin{proof} 
We first check whether $G$ has a clique-cutset, and if so, we obtain a clique-cut-partition $(A,B,C)$ of $G$ such that $G[A \cup C]$ does not admit a clique-cutset; this can be done by running the algorithm from~\cite{Tarjan} with input $G$, and it takes $O(n^3)$ time. If we obtained the answer that $G$ does not admit a clique-cutset, then we set $A = V(G)$, $B = \emptyset$, and $C = \emptyset$, and we set $r = 0$. On the other hand, if we obtained $(A,B,C)$, then we make a recursive call to the algorithm with input $G[B \cup C]$; if we obtained the answer that $G[B \cup C] \notin \mathcal{G}_{\text{T}}$, then we return the answer that $G \notin \mathcal{G}_{\text{T}}$ and stop,\footnote{This is correct because $\mathcal{G}_{\text{T}}$ is hereditary.} and otherwise (i.e.\ if we obtained the chromatic number of $G[B \cup C]$) we set $r = \chi(G[B \cup C])$. 

We may now assume that we have obtained the number $r$ (for otherwise, we terminated the algorithm). Clearly, $\chi(G) = \max\{\chi(G[A \cup C]),r\}$. Next, we run the algorithm from Theorem~\ref{thm-ring-chi-alg} with input $G[A \cup C]$; this takes $O(n^3)$ time. If the algorithm returned $\chi(G[A \cup C])$, then we return the number $\max\{\chi(G[A \cup C]),r\}$, and we stop. So assume the algorithm returned the answer that $G[A \cup C]$ is not a ring. 

So far, we know that $G[A \cup C]$ does not admit a clique-cutset and is not a ring. Theorem~\ref{thm-GT-decomp} now guarantees that either $G[A \cup C]$ is a complete graph or a 7-hyperantihole, or $G[A \cup C] \notin \mathcal{G}_{\text{T}}$ (in which case, $G \notin \mathcal{G}_{\text{T}}$, since $\mathcal{G}_{\text{T}}$ is hereditary). Clearly, complete graphs have stability number one, and hyperantiholes have stability number two. Thus, either $\alpha(G[A \cup C]) \leq 2$ or $G \notin \mathcal{G}_{\text{T}}$. Now, we determine whether $\alpha(G[A \cup C]) \leq 2$ by examining all triples of vertices in $G[A \cup C]$; this takes $O(n^3)$ time. If $\alpha(G[A \cup C]) \geq 3$, then we return the answer that $G \notin \mathcal{G}_{\text{T}}$ and stop. Assume now that $\alpha(G[A \cup C]) \leq 2$. Then we form the graph $\overline{G}[A \cup C]$ (the complement of $G[A \cup C]$) in $O(n^2)$ time, and we find a maximum matching $M$ in $\overline{G}[A \cup C]$ by running the algorithm from~\cite{matching}; this takes $O(n^4)$ time. Since $\alpha(G[A \cup C]) \leq 2$, we see that $\chi(G[A \cup C]) = |A \cup C|-|M|$; we now return the number $\max\{|A \cup C|-|M|,r\}$, and we stop. 

Clearly, the algorithm is correct. The slowest step takes $O(n^4)$ time, and we make $O(n)$ recursive calls. Thus, the total running time of the algorithm is $O(n^5)$. 
\end{proof}

\section{Optimal $\chi$-bounding functions} \label{sec:chibound} 

For all integers $k \geq 4$, we let $\mathcal{H}_k$ be the class of all induced subgraphs of $k$-hyperholes, and we let $\mathcal{A}_k$ be the class of all induced subgraphs of $k$-hyperantiholes; clearly, classes $\mathcal{H}_k$ and $\mathcal{A}_k$ are both hereditary, and they contain all complete graphs.\footnote{The reason we emphasize that these classes contain all complete graphs is that we defined optimal $\chi$-bounding functions only for hereditary, $\chi$-bounded classes that contain all complete graphs.} Recall that for all integers $k \geq 4$, $\mathcal{R}_k$ is the class of all graphs $G$ that have the property that every induced subgraph of $G$ either is a $k$-ring or has a simplicial vertex; clearly, $\mathcal{R}_k$ is hereditary and contains all complete graphs, and by Lemma~\ref{lemma-RRk-hered}, all $k$-rings belong to $\mathcal{R}_k$ (in particular, $\mathcal{H}_k \subseteq \mathcal{R}_k$). In this section, for all integers $k \geq 4$, we find the optimal $\chi$-bounding functions for the classes $\mathcal{H}_k$ (see Theorem~\ref{thm-HHk-chi-optimal}), $\mathcal{A}_k$ (see Theorem~\ref{thm-AAk-chi-optimal}), and $\mathcal{R}_k$ (see Theorem~\ref{thm-RRk-chi-optimal}). Further, for all integers $k \geq 4$, we set $\mathcal{H}_{\geq k} = \bigcup\limits_{i=k}^{\infty} \mathcal{H}_i$ and $\mathcal{A}_{\geq k} = \bigcup\limits_{i=k}^{\infty} \mathcal{A}_i$, and we remind the reader that $\mathcal{R}_{\geq k} = \bigcup\limits_{i=k}^{\infty} \mathcal{R}_i$.\footnote{Clearly, for all integers $k \geq 4$ we have that: $\mathcal{H}_{\geq k}$ is the class of all induced subgraphs of hyperholes of length at least $k$; $\mathcal{A}_{\geq k}$ is the class of all induced subgraphs of hyperantiholes of length at least $k$; and $\mathcal{R}_{\geq k}$ contains all induced subgraphs of rings of length at least $k$. In particular, $\mathcal{H}_{\geq k} \subseteq \mathcal{R}_{\geq k}$. It is clear that $\mathcal{H}_{\geq k}$, $\mathcal{A}_{\geq k}$, and $\mathcal{R}_{\geq k}$ are hereditary and contain all complete graphs.} For all integers $k \geq 4$, we find the optimal $\chi$-bounding functions for the classes $\mathcal{H}_{\geq k}$ (see Corollary~\ref{cor-HHk-chi-optimal}), $\mathcal{A}_{\geq k}$ (see Corollary~\ref{cor-AAk-chi-optimal}), and $\mathcal{R}_{\geq k}$ (see Corollary~\ref{cor-RRk-chi-optimal}); see also Theorem~\ref{thm-HH-AA-RR-chi-optimal}. Finally, we find the optimal $\chi$-bounding function for the class $\mathcal{G}_{\text{T}}$ (see Theorem~\ref{thm-GT-chi}). 

Recall that $\mathbb{N}$ is the set of all positive integers, and let $i_\mathbb{N}$ be the identity function on $\mathbb{N}$, i.e.\ let $i_{\mathbb{N}}:\mathbb{N} \rightarrow \mathbb{N}$ be given by $i_{\mathbb{N}}(n) = n$ for all $n \in \mathbb{N}$. 

We define the function $f_{\text{T}}:\mathbb{N} \rightarrow \mathbb{N}$ by setting 
\begin{displaymath} 
\begin{array}{lll} 
f_{\text{T}}(n) & = & \left\{\begin{array}{lll} 
\lfloor 5n/4 \rfloor & \text{if} & \text{$n \equiv 0,1$ (mod $4$)} 
\\
\\
\lceil 5n/4 \rceil & \text{if} & \text{$n \equiv 2,3$ (mod $4$)} 
\end{array}\right. 
\end{array}
\end{displaymath} 
for all $n \in \mathbb{N}$.

For all odd integers $k \geq 5$, we define the function $f_k:\mathbb{N} \rightarrow \mathbb{N}$ by setting 
\begin{displaymath} 
\begin{array}{lll} 
f_k(n) & = & \left\{\begin{array}{lll} 
\Big\lfloor \frac{kn}{k-1} \Big\rfloor & \text{if} & \text{$n \equiv 0,1$ (mod $k-1$)} 
\\
\\
\Big\lceil \frac{kn}{k-1} \Big\rceil & \text{if} & \text{$n \equiv 2,\dots,k-2$ (mod $k-1$)}
\end{array}\right. 
\end{array} 
\end{displaymath} 
for all $n \in \mathbb{N}$. 

For all odd integers $k \geq 5$, we define the function $g_k:\mathbb{N} \rightarrow \mathbb{N}$ by setting 
\begin{displaymath} 
\begin{array}{lll} 
g_k(n) & = & \left\{\begin{array}{lll} 
\Big\lfloor \frac{kn}{k-1} \Big\rfloor & \text{if} & \text{$n \equiv 0,\dots,\frac{k-3}{2}$ (mod $k-1$)} 
\\
\\
\Big\lceil \frac{kn}{k-1} \Big\rceil & \text{if} & \text{$n \equiv \frac{k-1}{2},\dots,k-2$ (mod $k-1$)}
\end{array}\right. 
\end{array} 
\end{displaymath} 
for all $n \in \mathbb{N}$. 

Note that $f_{\text{T}} = f_5 = g_5$. Before turning to the classes mentioned at the beginning of this section, we prove a few technical lemmas concerning functions $f_{\text{T}}$, $f_k$, and $g_k$. 

\begin{lemma} \label{lemma-fk-calculation} Let $k \geq 5$ be an odd integer, and let $n \in \mathbb{N}$. Then $f_k(n) = n+\Big\lceil \frac{2 \lfloor n/2 \rfloor}{k-1} \Big\rceil$. 
\end{lemma}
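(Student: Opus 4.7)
The plan is to reduce the identity to a routine case analysis by first peeling off the integer part. Since $n \in \mathbb{Z}$, we have
\[
\frac{kn}{k-1} \;=\; n + \frac{n}{k-1},
\]
and both floor and ceiling commute with the addition of the integer $n$. Consequently, in the first branch of the definition of $f_k$ we have $f_k(n) - n = \lfloor \tfrac{n}{k-1}\rfloor$, and in the second branch $f_k(n) - n = \lceil \tfrac{n}{k-1}\rceil$. It therefore suffices to prove that
\[
\Bigl\lceil \frac{2\lfloor n/2\rfloor}{k-1} \Bigr\rceil \;=\; \begin{cases} \Bigl\lfloor \frac{n}{k-1} \Bigr\rfloor & \text{if } n \equiv 0,1 \pmod{k-1}, \\[2pt] \Bigl\lceil \frac{n}{k-1} \Bigr\rceil & \text{if } n \equiv 2,\dots,k-2 \pmod{k-1}.\end{cases}
\]

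Next I would write $n = q(k-1) + r$ with $q \geq 0$ and $0 \leq r \leq k-2$, and exploit the fact that $k-1$ is even (since $k$ is odd), so $q(k-1)$ is even and the parity of $n$ coincides with that of $r$. I would then split into four cases according to $r$: $(i)$ $r=0$, $(ii)$ $r=1$, $(iii)$ $2 \leq r \leq k-2$ with $r$ even, and $(iv)$ $3 \leq r \leq k-2$ with $r$ odd. In each case, $2\lfloor n/2\rfloor$ equals $n$ when $n$ is even and $n-1$ when $n$ is odd, which makes $2\lfloor n/2\rfloor$ immediately computable in terms of $q$ and $r$.

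The verification in each case is then a one-line computation: in case $(i)$ and case $(iii)$, $2\lfloor n/2\rfloor = n$, and one checks that $\tfrac{n}{k-1} = q$ or $q + \tfrac{r}{k-1}$ with $0 < \tfrac{r}{k-1} < 1$, yielding $q$ or $q+1$ respectively; in case $(ii)$ and case $(iv)$, $2\lfloor n/2\rfloor = n-1$, and one checks that $\tfrac{n-1}{k-1} = q$ or $q + \tfrac{r-1}{k-1}$ with $0 < \tfrac{r-1}{k-1} < 1$ (using $3 \leq r \leq k-2$ in case $(iv)$), again yielding $q$ or $q+1$. Comparing with the target value $\lfloor \tfrac{n}{k-1}\rfloor = q$ in cases $(i)$, $(ii)$ and $\lceil \tfrac{n}{k-1}\rceil = q+1$ in cases $(iii)$, $(iv)$ closes the argument.

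There is no real obstacle here; the only thing worth flagging is the small pitfall that when $r$ is odd and $2 \leq r \leq k-2$, one must use $r \geq 3$ to guarantee $\tfrac{r-1}{k-1} > 0$ and hence the ceiling is $q+1$ rather than $q$. Since $k-1$ is even, this is automatic, which is exactly why the hypothesis that $k$ is odd is used.
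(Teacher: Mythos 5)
Your proposal is correct and follows essentially the same route as the paper's proof: both write $n = q(k-1)+r$ with $0 \leq r \leq k-2$, exploit that $k-1$ is even (so the parities of $n$ and $r$ agree), and finish with an elementary floor/ceiling case check — your four parity cases are just the paper's two cases with $2\lfloor r/2 \rfloor$ written out. One small quibble: in case (iv), $r \geq 3$ is automatic simply because $r$ is an odd integer with $r \geq 2$, not ``since $k-1$ is even''; the evenness of $k-1$ is what you (correctly) used earlier to match the parity of $n$ with that of $r$.
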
 
\begin{proof} 
Set $m = \lfloor \frac{n}{k-1} \rfloor$ and $\ell = n-(k-1)m$. Clearly, $m$ is a nonnegative integer, $\ell \in \{0,\dots,k-2\}$, $n = (k-1)m+\ell$, and $n \equiv \ell$ (mod $k-1$). 

Since $k$ is odd, we have that $k-1$ is even, and so 
\begin{displaymath} 
\begin{array}{ccc cc} 
\Big\lceil \frac{2 \lfloor n/2 \rfloor}{k-1} \Big\rceil & = & \Bigg\lceil \frac{2 \Big\lfloor \frac{(k-1)m+\ell}{2} \Big\rfloor}{k-1} \Bigg\rceil & = & m+\Big\lceil \frac{2\lfloor \ell/2 \rfloor}{k-1} \Big\rceil. 
\end{array} 
\end{displaymath} 

If $0 \leq \ell \leq 1$, then $f_k(n) = \lfloor \frac{kn}{k-1} \rfloor$, and we have that 
\begin{displaymath}
\begin{array}{rcl} 
n+\Big\lceil \frac{2 \lfloor n/2 \rfloor}{k-1} \Big\rceil & = & n+m+\Big\lceil \frac{2\lfloor \ell/2 \rfloor}{k-1} \Big\rceil 
\\
\\
& = & n+m
\\
\\
& = & \lfloor \frac{kn}{k-1} \rfloor 
\\
\\
& = & f_k(n), 
\end{array} 
\end{displaymath} 
and we are done. 

Suppose now that $2 \leq \ell \leq k-2$; then $f_k(n) = \lceil \frac{kn}{k-1} \rceil$. First, we have that 
\begin{displaymath} 
\begin{array}{ccc cccc} 
n+\Big\lceil \frac{2 \lfloor n/2 \rfloor}{k-1} \Big\rceil & = & n+m+\Big\lceil \frac{2\lfloor \ell/2 \rfloor}{k-1} \Big\rceil & = & n+m+1 & = & \lfloor \frac{kn}{k-1} \rfloor+1. 
\end{array} 
\end{displaymath} 
Since $\ell \neq 0$, we see that $\frac{kn}{k-1}$ is not an integer, and so $\lfloor \frac{kn}{k-1} \rfloor+1 = \lceil \frac{kn}{k-1} \rceil$. It now follows that 
\begin{displaymath} 
\begin{array}{ccc cccc} 
n+\Big\lceil \frac{2 \lfloor n/2 \rfloor}{k-1} \Big\rceil & = & \lfloor \frac{kn}{k-1} \rfloor+1 & = & \lceil \frac{kn}{k-1} \rceil & = & f_k(n), 
\end{array} 
\end{displaymath} 
which is what we needed. This completes the argument. 
\end{proof}

\begin{lemma} \label{lemma-gk-calculation} Let $k \geq 5$ be an odd integer, and let $n \in \mathbb{N}$. Then $g_k(n) = n+\Big\lceil \lfloor \frac{2n}{k-1} \rfloor/2 \Big\rceil$. 
\end{lemma}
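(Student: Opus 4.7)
The plan is a direct calculation, parametrizing $n$ by its residue modulo $k-1$ and evaluating both sides. Write $n = (k-1)m + \ell$ with $m \in \mathbb{Z}_{\geq 0}$ and $\ell \in \{0,1,\dots,k-2\}$; since $k$ is odd, the thresholds $\frac{k-3}{2}$ and $\frac{k-1}{2}$ appearing in the definition of $g_k$ are integers, so the case split
\[
\ell \in \Big\{0,\dots,\tfrac{k-3}{2}\Big\} \quad\text{versus}\quad \ell \in \Big\{\tfrac{k-1}{2},\dots,k-2\Big\}
\]
makes sense.

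First, I would evaluate the left-hand side. Multiplying out, $\frac{kn}{k-1} = n + m + \frac{\ell}{k-1}$ with $0 \leq \frac{\ell}{k-1} < 1$, so $\lfloor \frac{kn}{k-1} \rfloor = n+m$ in all cases, while $\lceil \frac{kn}{k-1} \rceil$ equals $n+m$ if $\ell = 0$ and $n+m+1$ if $\ell \geq 1$. Consulting the definition of $g_k$, this gives $g_k(n) = n+m$ when $\ell \in \{0,\dots,\frac{k-3}{2}\}$ and $g_k(n) = n+m+1$ when $\ell \in \{\frac{k-1}{2},\dots,k-2\}$ (noting $\ell \geq \frac{k-1}{2} \geq 2$ in the second case, so the ceiling strictly exceeds the floor).

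Next, I would evaluate the right-hand side. Again $\frac{2n}{k-1} = 2m + \frac{2\ell}{k-1}$ with $0 \leq \frac{2\ell}{k-1} < 2$. When $\ell \leq \frac{k-3}{2}$ we have $2\ell \leq k-3 < k-1$, whence $\lfloor \frac{2n}{k-1} \rfloor = 2m$ and $\lceil \lfloor \frac{2n}{k-1} \rfloor /2 \rceil = m$. When $\ell \geq \frac{k-1}{2}$ we have $k-1 \leq 2\ell \leq 2k-4 < 2(k-1)$, whence $\lfloor \frac{2n}{k-1} \rfloor = 2m+1$ and $\lceil \lfloor \frac{2n}{k-1} \rfloor /2 \rceil = m+1$. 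Therefore $n + \lceil \lfloor \frac{2n}{k-1} \rfloor / 2 \rceil$ equals $n+m$ in the first regime and $n+m+1$ in the second, matching $g_k(n)$ in both cases.

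There is no real obstacle beyond careful bookkeeping; the only point to verify is that the parity of $k-1$ (even, because $k$ is odd) makes the threshold inequalities $2\ell < k-1$ and $2\ell \geq k-1$ perfectly line up with the two branches in the definition of $g_k$.
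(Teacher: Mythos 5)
Your proposal is correct and follows essentially the same route as the paper: write $n=(k-1)m+\ell$, split on whether $\ell\le\frac{k-3}{2}$ or $\ell\ge\frac{k-1}{2}$, and check that both sides equal $n+m$ in the first case and $n+m+1$ in the second. The only cosmetic difference is that you evaluate $\lfloor kn/(k-1)\rfloor$ and $\lceil kn/(k-1)\rceil$ directly as $n+m$ or $n+m+1$, whereas the paper reduces the floor/ceiling expressions to the residue $\ell$ first; the content is identical.
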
 
\begin{proof} 
Set $m = \lfloor \frac{n}{k-1} \rfloor$ and $\ell = n-(k-1)m$. Clearly, $m$ is a nonnegative integer, $\ell \in \{0,\dots,k-2\}$, $n = (k-1)m+\ell$, and $n \equiv \ell$ (mod $k-1$). 

First, we have that 
\begin{displaymath} 
\begin{array}{ccc cc} 
\Big\lceil \lfloor \frac{2n}{k-1} \rfloor/2 \Big\rceil & = & \Big\lceil \Big\lfloor \frac{2((k-1)m+\ell)}{k-1} \Big\rfloor/2 \Big\rceil & = & m+\Big\lceil \lfloor \frac{2\ell}{k-1} \rfloor/2 \Big\rceil. 
\end{array} 
\end{displaymath} 

Suppose first that $0 \leq \ell \leq \frac{k-3}{2}$; then $g_k(n) = \lfloor \frac{kn}{k-1} \rfloor$. We now have that 
\begin{displaymath} 
\begin{array}{rcl} 
n+\Big\lceil \lfloor \frac{2n}{k-1} \rfloor/2 \Big\rceil & = & n+m+\Big\lceil \lfloor \frac{2\ell}{k-1} \rfloor/2 \Big\rceil 
\\
\\
& = & n+m 
\\
\\
& = & \lfloor \frac{kn}{k-1} \rfloor 
\\
\\
& = & g_k(n), 
\end{array} 
\end{displaymath} 
which is what we needed. 

Suppose now that $\frac{k-1}{2} \leq \ell \leq k-2$; then $g_k(n) = \lceil \frac{kn}{k-1} \rceil$. Now, note that 
\begin{displaymath} 
\begin{array}{rcl} 
n+\Big\lceil \lfloor \frac{2n}{k-1} \rfloor/2 \Big\rceil & = & n+m+\Big\lceil \lfloor \frac{2\ell}{k-1} \rfloor/2 \Big\rceil 
\\
\\
& = & n+m+1 
\\
\\
& = & \lfloor \frac{kn}{k-1} \rfloor+1. 
\end{array} 
\end{displaymath} 
Since $\ell \neq 0$, we see that $\frac{kn}{k-1}$ is not an integer, and so $\lfloor \frac{kn}{k-1} \rfloor+1 = \lceil \frac{kn}{k-1} \rceil$. We now have that 
\begin{displaymath} 
\begin{array}{ccc cccc} 
n+\Big\lceil \lfloor \frac{2n}{k-1} \rfloor/2 \Big\rceil & = & \lfloor \frac{kn}{k-1} \rfloor+1 & = & \lceil \frac{kn}{k-1} \rceil & = & g_k(n), 
\end{array} 
\end{displaymath}
which is what we needed. This completes the argument. 
\end{proof} 

Given functions $f,g:\mathbb{N} \rightarrow \mathbb{N}$, we write $f \leq g$ and $g \geq f$, if for all $n \in \mathbb{N}$, we have that $f(n) \leq g(n)$. As usual, a function $f:\mathbb{N} \rightarrow \mathbb{N}$ is said to be {\em nondecreasing} if for all $n_1,n_2 \in \mathbb{N}$ such that $n_1 \leq n_2$, we have that $f(n_1) \leq f(n_2)$. 

\begin{lemma} \label{lemma-fTfkgk} Function $f_{\text{T}}$ is nondecreasing, and $f_{\text{T}} = f_5 = g_5$. Furthermore, for all odd integers $k \geq 5$, all the following hold: 
\begin{itemize} 
\item[(a)] $f_{\text{T}} \geq f_k \geq g_k$; 
\item[(b)] functions $f_k$ and $g_k$ are nondecreasing; 
\item[(c)] $f_k \geq f_{k+2}$ and $g_k \geq g_{k+2}$. 
\end{itemize} 
\end{lemma}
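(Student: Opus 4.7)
The plan is to reduce every claim to the compact formulas
$f_k(n) = n + \lceil \frac{2 \lfloor n/2 \rfloor}{k-1} \rceil$ and
$g_k(n) = n + \lceil \lfloor \frac{2n}{k-1} \rfloor / 2 \rceil$
supplied by Lemmas~\ref{lemma-fk-calculation} and~\ref{lemma-gk-calculation}. I would first check $f_{\text{T}} = f_5 = g_5$ by direct comparison of the piecewise definitions: for $k=5$ one has $k-1 = 4$, $(k-3)/2 = 1$, and $(k-1)/2 = 2$, so the case splits of $f_{\text{T}}$, $f_5$, and $g_5$ coincide verbatim on the residue classes $\{0,1,2,3\}$ modulo $4$.

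With the compact formulas in hand, the monotonicity of $f_k$ and $g_k$ in $n$ is immediate: both $\lfloor n/2 \rfloor$ and $\lfloor 2n/(k-1) \rfloor$ are nondecreasing in $n$, and floor and ceiling preserve this, so $f_k(n+1) \geq f_k(n)$ and $g_k(n+1) \geq g_k(n)$; since $f_{\text{T}} = f_5$, this also yields monotonicity of $f_{\text{T}}$. Claim (c) falls out of the same rewrites: $\frac{1}{k-1} \geq \frac{1}{k+1}$ for $k \geq 2$, so the arguments of the outer ceilings shrink weakly when $k$ is replaced by $k+2$, giving $f_k \geq f_{k+2}$ and $g_k \geq g_{k+2}$. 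The inequality $f_{\text{T}} \geq f_k$ in (a) is the $k=5$ case of the same observation (or, equivalently, iterates the $f$-part of (c)).

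The only part that does not reduce to monotonicity of floor and ceiling is $f_k \geq g_k$ in (a), and this is where the little real work lies. I would handle it by returning to the original piecewise definitions and splitting by the residue $\ell \in \{0, 1, \ldots, k-2\}$ of $n$ modulo $k-1$. For $\ell \in \{0, 1\}$ both $f_k(n)$ and $g_k(n)$ equal $\lfloor kn/(k-1) \rfloor$; for $\ell \in \{(k-1)/2, \ldots, k-2\}$ both equal $\lceil kn/(k-1) \rceil$; and in the middle range $\ell \in \{2, \ldots, (k-3)/2\}$ (empty for $k=5$, nonempty for $k \geq 7$) one has $f_k(n) = \lceil kn/(k-1) \rceil$ while $g_k(n) = \lfloor kn/(k-1) \rfloor$, so $f_k(n) \geq g_k(n)$, in fact strictly (since $\ell \neq 0$ forces $\lceil kn/(k-1) \rceil = \lfloor kn/(k-1) \rfloor + 1$). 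No substantive obstacle is anticipated; the whole lemma is essentially bookkeeping once the compact formulas of Lemmas~\ref{lemma-fk-calculation} and~\ref{lemma-gk-calculation} are invoked, with the mild care point being the observation that $\lceil kn/(k-1) \rceil$ and $\lfloor kn/(k-1) \rfloor$ agree exactly when $(k-1)$ divides $n$.
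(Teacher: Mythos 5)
Your proposal is correct and follows essentially the same route as the paper: the paper also verifies $f_{\text{T}} = f_5 = g_5$ and $f_k \geq g_k$ directly from the piecewise definitions, and derives the monotonicity claims and the comparisons across $k$ from the closed forms of Lemmas~\ref{lemma-fk-calculation} and~\ref{lemma-gk-calculation}. Your write-up simply spells out the case analysis (residues $\ell$ modulo $k-1$, with the middle range empty for $k=5$) that the paper leaves as ``follows from construction.''
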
 
\begin{proof} 
The fact that $f_{\text{T}}$ is nondecreasing, and that $f_{\text{T}} = f_5 = g_5$, follows from the definitions of $f_{\text{T}}$, $f_5$, and $g_5$. Further, it follows from construction that for all odd integers $k \geq 5$, we have that $f_k \geq g_k$. The rest readily follows from Lemmas~\ref{lemma-fk-calculation} and~\ref{lemma-gk-calculation}. 
\end{proof} 

\begin{lemma} \label{lemma-hyp-chi} Let $k \geq 5$ be an odd integer. Then all $k$-hyperholes $H$ satisfy $\chi(H) \leq f_k(\omega(H))$. Furthermore, there exists a sequence $\{H_n^k\}_{n=2}^{\infty}$ of $k$-hyperholes such that for all integers $n \geq 2$, we have that $\omega(H_n^k) = n$ and $\chi(H_n^k) = f_k(n)$. 
\end{lemma}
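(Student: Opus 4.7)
The plan is to reduce to Lemma~\ref{lemma-hyperhole-chi-formula}, which says $\chi(H) = \max\{\omega(H), \lceil |V(H)|/\alpha(H)\rceil\}$. For a $k$-hyperhole $H$ with $k$ odd, we have $\alpha(H) = (k-1)/2$, and one checks directly from the definition that $f_k(n) \geq n$ for all $n$. Therefore the upper bound $\chi(H) \leq f_k(\omega(H))$ will follow as soon as we show $\lceil 2|V(H)|/(k-1)\rceil \leq f_k(\omega(H))$, i.e.\ as soon as we can bound $|V(H)|$ in terms of $n := \omega(H)$ alone.

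For that bound, fix a hyperhole partition $X_1,\dots,X_k$ of $V(H)$. Since each $X_i \cup X_{i+1}$ is a clique, $|X_i|+|X_{i+1}| \leq n$ for all $i$ (indices mod $k$); summing these $k$ inequalities cyclically gives $2|V(H)| \leq kn$, hence $|V(H)| \leq \lfloor kn/2\rfloor$. The remaining task is the identity
\[
\Big\lceil \tfrac{2\lfloor kn/2\rfloor}{k-1}\Big\rceil \;=\; n + \Big\lceil \tfrac{2\lfloor n/2\rfloor}{k-1}\Big\rceil \;=\; f_k(n),
\]
where the second equality is exactly Lemma~\ref{lemma-fk-calculation}. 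For the first equality, since $k$ is odd the parity of $kn$ matches that of $n$: when $n=2m$ one has $\lfloor kn/2\rfloor = km$, giving $\lceil 2km/(k-1)\rceil = n + \lceil n/(k-1)\rceil$; when $n=2m+1$ one has $\lfloor kn/2\rfloor = km + (k-1)/2$, giving $\lceil 2km/(k-1) + 1\rceil = n + \lceil(n-1)/(k-1)\rceil$. In either case the right-hand side matches $n + \lceil 2\lfloor n/2\rfloor/(k-1)\rceil$.

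For the lower bound, we construct $H_n^k$ by choosing clique sizes as balanced as possible. If $n=2m$, take $|X_i|=m$ for all $i \in \{1,\dots,k\}$. If $n=2m+1$, take $|X_i|=m+1$ for odd $i \in \{1,3,\dots,k-2\}$ and $|X_i|=m$ for the remaining indices (that is, for even $i$ and for $i=k$). One verifies directly that each consecutive pair satisfies $|X_i|+|X_{i+1}|\leq n$ with equality for at least one pair, so $\omega(H_n^k)=n$; and that $|V(H_n^k)| = \lfloor kn/2\rfloor$, saturating the upper bound above. Plugging into Lemma~\ref{lemma-hyperhole-chi-formula} and applying the same identity used in the upper bound yields $\chi(H_n^k)=f_k(n)$.

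The main obstacle is not substantive but bookkeeping: because $k$ is odd, the odd-$n$ construction cannot alternate cleanly around the whole cycle, which forces the single pair $(X_{k-1},X_k)$ to be $(m,m)$ rather than $(m+1,m)$. One must check that this choice still attains the extremal value $|V| = \lfloor kn/2\rfloor$ and that the resulting ceiling identities match the piecewise definition of $f_k$ on both residue ranges modulo $k-1$; this is exactly what Lemma~\ref{lemma-fk-calculation} packages for us.
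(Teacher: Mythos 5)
Your proposal is correct and follows essentially the same route as the paper: apply Lemma~\ref{lemma-hyperhole-chi-formula} with $\alpha(H)=\tfrac{k-1}{2}$, bound $|V(H)|$ using the inequalities $|X_i|+|X_{i+1}|\leq\omega(H)$, invoke Lemma~\ref{lemma-fk-calculation}, and build near-balanced extremal hyperholes. Your cyclic-sum bound $|V(H)|\leq\lfloor kn/2\rfloor$ coincides (for odd $k$) with the paper's bound $\tfrac{k-1}{2}n+\lfloor n/2\rfloor$, and your construction is the paper's up to a rotation of the indices, so the differences are purely cosmetic.
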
 
\begin{proof} 
We begin by proving the first statement of the lemma. Let $H$ be a $k$-hyperhole, and let $(X_1,\dots,X_k)$ be a partition of $V(H)$ into nonempty cliques such that for all $i \in \{1,\dots,k\}$, $X_i$ is complete to $X_{i-1} \cup X_{i+1}$ and anticomplete to $V(H) \setminus (X_{i-1} \cup X_i \cup X_{i+1})$, as in the definition of a $k$-hyperhole. Since $H$ is a $k$-hyperhole, and since $k$ is odd, we have that $\alpha(H) = \lfloor k/2 \rfloor = \frac{k-1}{2}$. Then by Lemma~\ref{lemma-hyperhole-chi-formula}, we have that 
\begin{displaymath} 
\begin{array}{ccccc} 
\chi(H) & = & \max\Big\{\omega(H),\Big\lceil \frac{|V(H)|}{\alpha(H)} \Big\rceil\Big\} & = & \max\Big\{\omega(H),\Big\lceil \frac{2|V(H)|}{k-1} \Big\rceil\Big\}. 
\end{array} 
\end{displaymath} 
It is clear that $\omega(H) \leq f_k(\omega(H))$, and so it suffices to show that $\Big\lceil \frac{2|V(H)|}{k-1} \Big\rceil \leq f_k(\omega(H))$. Clearly, for all $i \in \{1,\dots,k\}$, $X_i \cup X_{i+1}$ is a clique, and so $|X_i|+|X_{i+1}| \leq \omega(H)$. In particular, $|X_k|+|X_1| \leq \omega(H)$, and so either $|X_k| \leq \lfloor \omega(H)/2 \rfloor$ or $|X_1| \leq \lfloor \omega(H)/2 \rfloor$; by symmetry, we may assume that $|X_k| \leq \lfloor \omega(H)/2 \rfloor$. Now, using the fact that $k$ is odd, we get that 
\begin{displaymath} 
\begin{array}{rcl} 
|V(H)| & = & \sum\limits_{i=1}^k |X_i| 
\\
\\
& = & \Big(\sum\limits_{i=1}^{(k-1)/2} (|X_{2i-1}|+|X_{2i}|)\Big)+|X_k| 
\\
\\
& \leq & \frac{k-1}{2}\omega(H)+\lfloor \omega(H)/2 \rfloor. 
\end{array} 
\end{displaymath} 
But now by Lemma~\ref{lemma-fk-calculation}, we have that 
\begin{displaymath} 
\begin{array}{rcl} 
\Big\lceil \frac{2|V(H)|}{k-1} \Big\rceil & \leq & \Bigg\lceil \frac{2\Big(\frac{k-1}{2}\omega(H)+\lfloor \omega(H)/2 \rfloor\Big)}{k-1} \Bigg\rceil 
\\
\\
& = & \omega(H)+\Big\lceil \frac{2\lfloor \omega(H)/2 \rfloor}{k-1} \Big\rceil 
\\
\\
& = & f_k(\omega(H)), 
\end{array} 
\end{displaymath} 
which is what we needed. This proves the first statement of the lemma. 

It remains to prove the second statement of the lemma. We fix an integer $n \geq 2$, and we construct $H_n^k$ as follows. Let $X_1,\dots,X_k$ be pairwise disjoint sets such that for all $i \in \{1,\dots,k\}$, 
\begin{itemize} 
\item if $i$ is odd, then $|X_i| = \lfloor n/2 \rfloor$, and 
\item if $i$ is even, then $|X_i| = \lceil n/2 \rceil$. 
\end{itemize} 
Since $n \geq 2$, sets $X_1,\dots,X_k$ are all nonempty. Now, let $H_n^k$ be the graph whose vertex set is $V(H_n^k) = X_1 \cup \dots \cup X_k$, and with adjacency as follows: 
\begin{itemize} 
\item $X_1,\dots,X_k$ are all cliques; 
\item for all $i \in \{1,\dots,k\}$, $X_i$ is complete to $X_{i-1} \cup X_{i+1}$ and anticomplete to $V(H_n^k) \setminus (X_{i-1} \cup X_i \cup X_{i+1})$. 
\end{itemize} 
Clearly, $H_n^k$ is a $k$-hyperhole, and $\omega(H_n^k) = \lfloor n/2 \rfloor+\lceil n/2 \rceil = n$. It remains to show that $\chi(H_n^k) = f_k(n)$. But by the first statement of the lemma, we have that $\chi(H_n^k) \leq f_k(n)$, and so in fact, it suffices to show that $\chi(H_n^k) \geq f_k(n)$. 

It is clear that $\chi(H_n^k) \geq \Big\lceil \frac{|V(H_n^k)|}{\alpha(H_n^k)} \Big\rceil$. Further, by construction, and by the fact that $k$ is odd, we have that 
\begin{itemize} 
\item $\alpha(H_n^k) = \lfloor k/2 \rfloor = \frac{k-1}{2}$, and 
\item $|V(H_n^k)| = \lceil k/2 \rceil \lfloor n/2 \rfloor+\lfloor k/2 \rfloor \lceil n/2 \rceil = \frac{k-1}{2}n+\lfloor n/2 \rfloor$. 
\end{itemize} 
Thus, 
\begin{displaymath} 
\begin{array}{ccc cccc} 
\chi(H_n^k) & \geq & \Big\lceil \frac{|V(H_n^k)|}{\alpha(H_n^k)} \Big\rceil & = & \Bigg\lceil \frac{2\Big(\frac{k-1}{2}n+\lfloor n/2 \rfloor\Big)}{k-1} \Bigg\rceil & = & n+\Big\lceil \frac{2 \lfloor n/2 \rfloor}{k-1} \Big\rceil. 
\end{array} 
\end{displaymath} 
Lemma~\ref{lemma-fk-calculation} now implies that 
\begin{displaymath}
\begin{array}{ccc cc} 
\chi(H_n^k) & \geq & n+\Big\lceil \frac{2 \lfloor n/2 \rfloor}{k-1} \Big\rceil & = & f_k(n), 
\end{array} 
\end{displaymath} 
which is what we needed. This proves the second statement of the lemma. 
\end{proof} 

\begin{theorem} \label{thm-HHk-chi-optimal} Let $k \geq 4$ be an integer. Then $\mathcal{H}_k$ is $\chi$-bounded. Furthermore, if $k$ is even, then the identity function $i_{\mathbb{N}}$ is the optimal $\chi$-bounding function for $\mathcal{H}_k$, and if $k$ is odd, then $f_k$ is the optimal $\chi$-bounding function for $\mathcal{H}_k$. 
\end{theorem}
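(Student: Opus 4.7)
The plan is to handle the even and odd cases separately, in each case establishing the upper bound on $\chi$ and then exhibiting a tight family.

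For $k$ even, every $k$-hyperhole is an even $k$-ring, and Lemma~\ref{lemma-even-ring-col-alg} tells us even rings are perfect. Since the class of perfect graphs is hereditary, every $G \in \mathcal{H}_k$ satisfies $\chi(G) = \omega(G) = i_{\mathbb{N}}(\omega(G))$, showing that $i_{\mathbb{N}}$ is a $\chi$-bounding function for $\mathcal{H}_k$. For optimality, restricting a $k$-hyperhole to a single one of its cliques shows that $K_n \in \mathcal{H}_k$ for every $n \in \mathbb{N}$, and $\chi(K_n) = \omega(K_n) = n = i_{\mathbb{N}}(n)$.

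For $k$ odd, I first would check that $f_k$ is a $\chi$-bounding function for $\mathcal{H}_k$. Let $G \in \mathcal{H}_k$, and fix a $k$-hyperhole $H^*$ with partition $(X_1,\dots,X_k)$ such that $G$ is an induced subgraph of $H^*$; set $Y_i = V(G) \cap X_i$ for each $i$. If every $Y_i$ is nonempty, then $(Y_1,\dots,Y_k)$ exhibits $G$ as a $k$-hyperhole, and Lemma~\ref{lemma-hyp-chi} yields $\chi(G) \leq f_k(\omega(G))$ directly. Otherwise some $Y_i$ is empty; I claim $G$ is then chordal. A $k$-hyperhole is a $k$-ring, so by Lemma~\ref{lemma-ring-chordal}(a) every hole of $H^*$ meets each $X_j$ in exactly one vertex. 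Any hole of the induced subgraph $G$ is a hole of $H^*$, hence would meet $Y_i$; but $Y_i = \emptyset$, so $G$ is hole-free, i.e.\ chordal. Then $G$ is perfect, and $\chi(G) = \omega(G) \leq f_k(\omega(G))$, using that $f_k(n) \geq n$ for every $n$ (which follows at once from $f_k(n) \in \{\lfloor kn/(k-1) \rfloor, \lceil kn/(k-1) \rceil\}$).

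For optimality of $f_k$ in the odd case, the sequence $\{H_n^k\}_{n \geq 2}$ supplied by Lemma~\ref{lemma-hyp-chi} witnesses $\omega(H_n^k) = n$ and $\chi(H_n^k) = f_k(n)$ for every $n \geq 2$. For $n = 1$, $K_1 \in \mathcal{H}_k$ satisfies $\chi(K_1) = \omega(K_1) = 1 = \lfloor k/(k-1) \rfloor = f_k(1)$, since $1 \equiv 1 \pmod{k-1}$ as $k-1 \geq 4$. This exhausts all $n \in \mathbb{N}$ and completes the argument. The only mildly non-routine step is the dichotomy in the odd case between ``$G$ is itself a hyperhole'' and ``$G$ is chordal''; once it is in place, the theorem reduces to Lemmas~\ref{lemma-even-ring-col-alg}, \ref{lemma-ring-chordal}, and~\ref{lemma-hyp-chi}, so I anticipate no further obstacles.
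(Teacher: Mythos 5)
Your proposal is correct and follows essentially the same route as the paper: the dichotomy that every graph in $\mathcal{H}_k$ is either a $k$-hyperhole or chordal (hence perfect), combined with Lemma~\ref{lemma-hyp-chi} for odd $k$ and Lemma~\ref{lemma-even-ring-col-alg} for even $k$, with complete graphs, the family $H_n^k$, and $K_1$ witnessing optimality. The only cosmetic difference is that you justify the chordality of the subgraphs missing some $X_i$ via Lemma~\ref{lemma-ring-chordal}(a) where the paper invokes Lemma~\ref{lemma-ring-chordal}(c); both work.
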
 
\begin{proof} 
Note that every induced subgraph of a $k$-hyperhole is either a $k$-hyperhole or a chordal graph.\footnote{This is easy to see by inspection, but it also follows from Lemma~\ref{lemma-ring-chordal}(c).} Since chordal graphs are perfect (by~\cite{Berge-German, D61}), it follows that all graphs in $\mathcal{H}_k$ are either $k$-hyperholes or perfect graphs. Furthermore, by construction, $\mathcal{H}_k$ contains all $k$-hyperholes. Thus, if $k$ is odd, then Lemma~\ref{lemma-hyp-chi} implies that $f_k$ is the optimal $\chi$-bounding function for $\mathcal{H}_k$.\footnote{We are also using the fact that $K_1 \in \mathcal{H}_k$, $\omega(K_1) = 1$, and $\chi(K_1) = 1 = f_k(1)$.} Suppose now that $k$ is even. By Lemma~\ref{lemma-even-ring-col-alg}, all even hyperholes are perfect, and we deduce that all graphs in $\mathcal{H}_k$ are perfect. Furthermore, $\mathcal{H}_k$ contains all complete graphs. So, $i_{\mathbb{N}}$ is the optimal $\chi$-bounding function for $\mathcal{H}_k$. 
\end{proof} 

\begin{corollary} \label{cor-HHk-chi-optimal} Let $k \geq 4$ be an integer. Then $\mathcal{H}_{\geq k}$ is $\chi$-bounded. Furthermore, if $k$ is even, then $f_{k+1}$ is the optimal $\chi$-bounding function for $\mathcal{H}_{\geq k}$, and if $k$ is odd, then $f_k$ is the optimal $\chi$-bounding function for $\mathcal{H}_{\geq k}$. 
\end{corollary}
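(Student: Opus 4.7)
The plan is to reduce Corollary~\ref{cor-HHk-chi-optimal} to Theorem~\ref{thm-HHk-chi-optimal} by exploiting the monotonicity of $f_k$ in $k$ established in Lemma~\ref{lemma-fTfkgk}(c). Write $k^* := k$ when $k$ is odd and $k^* := k+1$ when $k$ is even, so that $k^*$ is the smallest odd integer with $k^* \geq k$, and note that $f_{k^*}$ is precisely the function the statement asks us to prove optimal.

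For the upper bound, let $G \in \mathcal{H}_{\geq k}$, so $G \in \mathcal{H}_j$ for some $j \geq k$. If $j$ is odd, then $j \geq k^*$, and Theorem~\ref{thm-HHk-chi-optimal} combined with iterated application of Lemma~\ref{lemma-fTfkgk}(c) gives
\begin{displaymath}
\chi(G) \leq f_j(\omega(G)) \leq f_{k^*}(\omega(G)).
\end{displaymath}
If instead $j$ is even, then Theorem~\ref{thm-HHk-chi-optimal} yields $\chi(G) \leq \omega(G)$, which is at most $f_{k^*}(\omega(G))$ by the trivial inequality $n \leq \lfloor k^*n/(k^*-1)\rfloor \leq f_{k^*}(n)$ (valid for every $n \in \mathbb{N}$, since $k^*/(k^*-1) > 1$). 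Hence $f_{k^*}$ is a $\chi$-bounding function for $\mathcal{H}_{\geq k}$, and in particular this class is $\chi$-bounded.

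For optimality, I invoke the explicit sequence supplied by Lemma~\ref{lemma-hyp-chi}: since $k^* \geq 5$ is odd, there is a sequence $\{H_n^{k^*}\}_{n=2}^{\infty}$ of $k^*$-hyperholes with $\omega(H_n^{k^*}) = n$ and $\chi(H_n^{k^*}) = f_{k^*}(n)$, and each $H_n^{k^*}$ lies in $\mathcal{H}_{k^*} \subseteq \mathcal{H}_{\geq k}$. The remaining case $n = 1$ is handled by $K_1 \in \mathcal{H}_{\geq k}$, for which $\omega(K_1) = \chi(K_1) = 1 = f_{k^*}(1)$ directly from the definition of $f_{k^*}$. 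This exhibits, for every $n \in \mathbb{N}$, a witness in $\mathcal{H}_{\geq k}$ realizing the bound, so $f_{k^*}$ is indeed the optimal $\chi$-bounding function.

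There is no genuine obstacle here: the corollary follows formally from Theorem~\ref{thm-HHk-chi-optimal}, the extremal sequence of Lemma~\ref{lemma-hyp-chi}, and the monotonicity properties of the family $\{f_k\}$ recorded in Lemma~\ref{lemma-fTfkgk}. The only modest subtlety is recognizing that in the even case $k$ the identity bound coming from even hyperholes is already dominated by $f_{k+1}$, which is precisely the content of the inequality $n \leq f_{k^*}(n)$ used above.
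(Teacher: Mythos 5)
Your proposal is correct and follows essentially the same route as the paper, which deduces the corollary directly from Theorem~\ref{thm-HHk-chi-optimal} together with the monotonicity $f_j \geq f_{j+2}$ of Lemma~\ref{lemma-fTfkgk}(c); your explicit appeal to the extremal sequence of Lemma~\ref{lemma-hyp-chi} and to $K_1$ for $n=1$ just spells out the optimality part that the paper leaves implicit in Theorem~\ref{thm-HHk-chi-optimal}. No gaps.
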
 
\begin{proof} 
This follows immediately from Lemma~\ref{lemma-fTfkgk}(c) and Theorem~\ref{thm-HHk-chi-optimal}. 
\end{proof} 

\begin{lemma} \label{lemma-ring-chi-optimal} Let $k \geq 5$ be an odd integer. Then all $k$-rings $R$ satisfy $\chi(R) \leq f_k(\omega(R))$. Furthermore, there exists a sequence $\{R_n^k\}_{n=2}^{\infty}$ of $k$-rings such that for all integers $n \geq 2$, we have that $\omega(R_n^k) = n$ and $\chi(R_n^k) = f_k(n)$. 
\end{lemma}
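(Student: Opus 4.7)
The plan is to derive this lemma essentially for free from the machinery already assembled: the upper bound from Theorem~\ref{thm-ring-hyperhole} combined with the hyperhole bound in Lemma~\ref{lemma-hyp-chi}, and the tightness from the fact that every hyperhole is a ring.

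For the upper bound, I would argue as follows. Fix any $k$-ring $R$. By Theorem~\ref{thm-ring-hyperhole}, there is a $k$-hyperhole $H$ in $R$ with $\chi(H) = \chi(R)$. Since $H$ is an induced subgraph of $R$, we have $\omega(H) \leq \omega(R)$. Now apply Lemma~\ref{lemma-hyp-chi} to get $\chi(H) \leq f_k(\omega(H))$, and then use the fact that $f_k$ is nondecreasing (Lemma~\ref{lemma-fTfkgk}(b)) to conclude
\[
\chi(R) \;=\; \chi(H) \;\leq\; f_k(\omega(H)) \;\leq\; f_k(\omega(R)).
\]

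For the tightness statement, the observation that every hyperhole is a ring does all the work. I would simply let $R_n^k := H_n^k$, where $H_n^k$ is the $k$-hyperhole constructed in the second half of Lemma~\ref{lemma-hyp-chi}. Since every $k$-hyperhole is a $k$-ring, $R_n^k$ is a $k$-ring, and by construction $\omega(R_n^k) = n$ and $\chi(R_n^k) = f_k(n)$.

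There is really no obstacle here: both halves of the statement reduce in one step to Lemma~\ref{lemma-hyp-chi}, once we invoke Theorem~\ref{thm-ring-hyperhole} to pull a hyperhole out of the ring in the upper bound direction. The only mild point to be careful about in the writeup is to cite Lemma~\ref{lemma-fTfkgk}(b) for the monotonicity of $f_k$ when passing from $\omega(H)$ to $\omega(R)$; everything else is immediate.
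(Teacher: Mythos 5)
Your proposal is correct and matches the paper's own proof essentially verbatim: the upper bound via Theorem~\ref{thm-ring-hyperhole}, Lemma~\ref{lemma-hyp-chi}, and the monotonicity of $f_k$ from Lemma~\ref{lemma-fTfkgk}(b), and tightness by taking $R_n^k = H_n^k$ since every $k$-hyperhole is a $k$-ring. Nothing further is needed.
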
 
\begin{proof} 
Since every $k$-hyperhole is a $k$-ring, the second statement of the lemma follows immediately from the second statement of Lemma~\ref{lemma-hyp-chi}. It remains to prove the first statement. Let $R$ be a $k$-ring. Then by Theorem~\ref{thm-ring-hyperhole}, there exists a $k$-hyperhole $H$ in $R$ such that $\chi(R) = \chi(H)$. By Lemma~\ref{lemma-hyp-chi}, we have that $\chi(H) \leq f_k(\omega(H))$. Clearly, $\omega(H) \leq \omega(R)$, and by Lemma~\ref{lemma-fTfkgk}(b), $f_k$ is a nondecreasing function. We now have that 
\begin{displaymath} 
\begin{array}{ccc cccc} 
\chi(R) & = & \chi(H) & \leq & f_k(\omega(H)) & \leq & f_k(\omega(R)), 
\end{array} 
\end{displaymath} 
which is what we needed. This completes the argument. 
\end{proof} 

\begin{theorem} \label{thm-RRk-chi-optimal} Let $k \geq 4$ be an integer. Then $\mathcal{R}_k$ is $\chi$-bounded. Furthermore, if $k$ is even, then the identity function $i_{\mathbb{N}}$ is the optimal $\chi$-bounding function for $\mathcal{R}_k$, and if $k$ is odd, then $f_k$ is the optimal $\chi$-bounding function for $\mathcal{R}_k$. 
\end{theorem}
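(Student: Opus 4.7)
The plan is to treat the even and odd cases in parallel by induction on $|V(G)|$, using the fact that non-ring members of $\mathcal{R}_k$ contain a simplicial vertex (by definition of $\mathcal{R}_k$), which lets us invoke Lemma~\ref{lemma-simplicial-chi} to peel off vertices one at a time.

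First I would establish the upper bound. Fix $G \in \mathcal{R}_k$. If $G$ is itself a $k$-ring, then when $k$ is even Lemma~\ref{lemma-even-ring-col-alg} gives $\chi(G) = \omega(G) = i_{\mathbb{N}}(\omega(G))$, and when $k$ is odd Lemma~\ref{lemma-ring-chi-optimal} gives $\chi(G) \leq f_k(\omega(G))$. Otherwise, by the definition of $\mathcal{R}_k$, $G$ has a simplicial vertex $v$; since $\mathcal{R}_k$ is hereditary (Lemma~\ref{lemma-RRk-hered}), $G \setminus v \in \mathcal{R}_k$, and induction yields $\chi(G \setminus v) \leq f(\omega(G \setminus v))$, where $f$ denotes $i_{\mathbb{N}}$ if $k$ is even and $f_k$ if $k$ is odd. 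Lemma~\ref{lemma-simplicial-chi} then gives $\chi(G) = \max\{\omega(G), \chi(G \setminus v)\}$. Since $f$ is nondecreasing (trivially for $i_{\mathbb{N}}$, and by Lemma~\ref{lemma-fTfkgk}(b) for $f_k$) and $f(n) \geq n$ for all $n \in \mathbb{N}$, we get $\chi(G) \leq \max\{f(\omega(G)), f(\omega(G \setminus v))\} = f(\omega(G))$, completing the induction. Note that in the even case this argument also shows that every $G \in \mathcal{R}_k$ satisfies $\chi(G) = \omega(G)$, so $\mathcal{R}_k$ is in fact a class of perfect graphs.

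For the tightness half I would exhibit, for each $n \in \mathbb{N}$, a graph $G_n \in \mathcal{R}_k$ with $\omega(G_n) = n$ and $\chi(G_n) = f(n)$. For $n = 1$ the one-vertex graph $K_1$ works since $f(1) = 1$. For $n \geq 2$, in the even case we take $G_n = K_n$, which lies in $\mathcal{R}_k$ (it is chordal, so every induced subgraph has a simplicial vertex) and satisfies $\omega(K_n) = \chi(K_n) = n = i_{\mathbb{N}}(n)$. In the odd case we take $G_n$ to be the $k$-hyperhole $H_n^k$ produced by Lemma~\ref{lemma-hyp-chi}; since every hyperhole is a ring, $H_n^k$ is a $k$-ring and hence $H_n^k \in \mathcal{R}_k$ by Lemma~\ref{lemma-RRk-hered}, and Lemma~\ref{lemma-hyp-chi} guarantees $\omega(H_n^k) = n$ and $\chi(H_n^k) = f_k(n)$.

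There is essentially no hard step here: once the preliminary lemmas are in place, the proof is a clean induction gluing Lemma~\ref{lemma-ring-chi-optimal} (or Lemma~\ref{lemma-even-ring-col-alg}) to the simplicial-peeling provided by Lemma~\ref{lemma-simplicial-chi}, and the extremal examples are handed to us by Lemma~\ref{lemma-hyp-chi} together with the observation $\mathcal{H}_k \subseteq \mathcal{R}_k$. The only small point to be careful about is confirming that $f_k$ is monotone and satisfies $f_k(n) \geq n$, both of which are immediate from its definition (or from Lemma~\ref{lemma-fk-calculation} and Lemma~\ref{lemma-fTfkgk}(b)).
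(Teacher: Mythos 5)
Your proof is correct and follows essentially the same route as the paper: induct on $|V(G)|$, handle rings via Lemma~\ref{lemma-even-ring-col-alg} (even) or Lemma~\ref{lemma-ring-chi-optimal} (odd), peel simplicial vertices with Lemma~\ref{lemma-simplicial-chi} using monotonicity of $f_k$, and get tightness from $K_n$ resp.\ the hyperholes $H_n^k$ of Lemma~\ref{lemma-hyp-chi}. The only nitpick is that Lemma~\ref{lemma-simplicial-chi} requires at least two vertices, so the one-vertex case should be noted separately (the paper does this by first disposing of complete graphs), but that is trivial.
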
 
\begin{proof} 
Suppose first that $k$ is even. By Lemma~\ref{lemma-even-ring-col-alg}, every $k$-ring $R$ satisfies $\chi(R) = \omega(R)$. Lemma~\ref{lemma-simplicial-chi} and an easy induction now imply that $\mathcal{R}_k$ is $\chi$-bounded by $i_{\mathbb{N}}$, and it is obvious that this $\chi$-bounding function is optimal. 

Suppose now that $k$ is odd. By Lemma~\ref{lemma-RRk-hered}, all $k$-rings belong to $\mathcal{R}_k$. Thus, it suffices to show that $\mathcal{R}_k$ is $\chi$-bounded by $f_k$, for optimality will then follow immediately from Lemma~\ref{lemma-ring-chi-optimal}.\footnote{We are also using the fact that $K_1 \in \mathcal{R}_k$, $\omega(K_1) = 1$, and $\chi(K_1) = 1 = f_k(1)$.} 

So, fix $G \in \mathcal{R}_k$, and assume inductively that all graphs $G' \in \mathcal{R}_k$ with $|V(G')|<|V(G)|$ satisfy $\chi(G') \leq f_k(\omega(G'))$. We must show that $\chi(G) \leq f_k(\omega(G))$. If $G$ is a complete graph, then $\chi(G) = \omega(G) \leq f_k(\omega(G))$, and we are done. So assume that $G$ is not complete, and in particular, $G$ has at least two vertices. 

Suppose that $G$ has a simplicial vertex $v$. Then by Lemma~\ref{lemma-simplicial-chi}, $\chi(G) = \max\{\omega(G),\chi(G \setminus v)\}$. Clearly, $\omega(G) \leq f_k(\omega(G))$. On the other hand, using the induction hypothesis and the fact that $f_k$ is nondecreasing (by Lemma~\ref{lemma-fTfkgk}(b)), we get that $\chi(G \setminus v) \leq f_k(\omega(G \setminus v)) \leq f_k(\omega(G))$. It now follows that $\chi(G) = \max\{\omega(G),\chi(G \setminus v)\} \leq f_k(\omega(G))$, which is what we needed. 

Suppose now that $G$ does not contain a simplicial vertex. Then by the definition of $\mathcal{R}_k$, $G$ is a $k$-ring, and so Lemma~\ref{lemma-ring-chi-optimal} implies that $\chi(G) \leq f_k(\omega(G))$. This completes the argument. 
\end{proof} 

\begin{corollary} \label{cor-RRk-chi-optimal} Let $k \geq 4$ be an integer. Then $\mathcal{R}_{\geq k}$ is $\chi$-bounded. Furthermore, if $k$ is even, then $f_{k+1}$ is the optimal $\chi$-bounding function for $\mathcal{R}_{\geq k}$, and if $k$ is odd, then $f_k$ is the optimal $\chi$-bounding function for $\mathcal{R}_{\geq k}$. 
\end{corollary}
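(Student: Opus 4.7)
The plan is to reduce the corollary to Theorem~\ref{thm-RRk-chi-optimal} together with the monotonicity facts in Lemma~\ref{lemma-fTfkgk}, splitting into cases based on the parity of $k$. The key observation is that $\mathcal{R}_{\geq k} = \bigcup_{i=k}^{\infty} \mathcal{R}_i$, and each $\mathcal{R}_i$ already has a known optimal $\chi$-bounding function from Theorem~\ref{thm-RRk-chi-optimal}; we only need to argue that one of these dominates all the others.

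First I would handle the case where $k$ is odd. Given $G \in \mathcal{R}_{\geq k}$, pick any $i \geq k$ with $G \in \mathcal{R}_i$. If $i$ is even, Theorem~\ref{thm-RRk-chi-optimal} gives $\chi(G) \leq \omega(G)$; since $f_k(n) \geq n$ for every $n \in \mathbb{N}$ (immediate from the definition of $f_k$), this yields $\chi(G) \leq f_k(\omega(G))$. If $i$ is odd, Theorem~\ref{thm-RRk-chi-optimal} gives $\chi(G) \leq f_i(\omega(G))$, and iterating Lemma~\ref{lemma-fTfkgk}(c) (which says $f_j \geq f_{j+2}$ for odd $j \geq 5$) we have $f_i \leq f_k$, so again $\chi(G) \leq f_k(\omega(G))$. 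Hence $f_k$ is a $\chi$-bounding function for $\mathcal{R}_{\geq k}$. Optimality is immediate: $\mathcal{R}_k \subseteq \mathcal{R}_{\geq k}$, and by Theorem~\ref{thm-RRk-chi-optimal}, $f_k$ is already optimal on $\mathcal{R}_k$, so the sequence of graphs witnessing optimality on $\mathcal{R}_k$ also witnesses it on $\mathcal{R}_{\geq k}$.

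For $k$ even, I would write $\mathcal{R}_{\geq k} = \mathcal{R}_k \cup \mathcal{R}_{\geq k+1}$. The odd case (already handled above, with $k+1$ in place of $k$) shows that $\mathcal{R}_{\geq k+1}$ is $\chi$-bounded by $f_{k+1}$. On $\mathcal{R}_k$, Theorem~\ref{thm-RRk-chi-optimal} gives $\chi(G) \leq \omega(G) \leq f_{k+1}(\omega(G))$, using again that $f_{k+1}(n) \geq n$. Therefore $f_{k+1}$ is a $\chi$-bounding function for all of $\mathcal{R}_{\geq k}$. Optimality comes from the containment $\mathcal{R}_{k+1} \subseteq \mathcal{R}_{\geq k}$ together with the optimality of $f_{k+1}$ on $\mathcal{R}_{k+1}$ from Theorem~\ref{thm-RRk-chi-optimal}.

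There is no real obstacle here: everything is bookkeeping on top of Theorem~\ref{thm-RRk-chi-optimal}, the pointwise inequality $i_{\mathbb{N}} \leq f_j$ for odd $j \geq 5$, and Lemma~\ref{lemma-fTfkgk}(c). The only mild care needed is to make sure both the upper bound (covering all $i \geq k$ uniformly) and the optimality witnesses (coming from the smallest relevant $\mathcal{R}_i$, namely $\mathcal{R}_k$ if $k$ is odd and $\mathcal{R}_{k+1}$ if $k$ is even) are correctly lined up; but both are handled cleanly by the parity split above.
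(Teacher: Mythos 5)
Your proof is correct and follows essentially the same route as the paper, which simply cites Lemma~\ref{lemma-fTfkgk}(c) and Theorem~\ref{thm-RRk-chi-optimal}; you have just spelled out the routine bookkeeping (the pointwise bound $i_{\mathbb{N}} \leq f_j$, the iteration of $f_j \geq f_{j+2}$, and the optimality witnesses coming from $\mathcal{R}_k$ or $\mathcal{R}_{k+1}$) that the paper leaves implicit.
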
 
\begin{proof} 
This follows immediately from Lemma~\ref{lemma-fTfkgk}(c) and Theorem~\ref{thm-RRk-chi-optimal}. 
\end{proof} 

A {\em cobipartite graph} is a graph whose complement is bipartite. Equivalently, a graph is {\em cobipartite} if its vertex set can be partitioned into two (possibly empty) cliques. 

\begin{lemma} \label{lemma-hypanti-perfect} Let $k \geq 4$ be an integer, let $A$ be a $k$-hyperantihole, and let $(X_1,\dots,X_k)$ be a partition of $V(A)$ into nonempty cliques such that for all $i \in \{1,\dots,k\}$, $X_i$ is complete to $V(A) \setminus (X_{i-1} \cup X_i \cup X_{i+1})$ and anticomplete to $X_{i-1} \cup X_{i+1}$. Then for all $i \in \{1,\dots,k\}$, $A \setminus X_i$ is perfect. Furthermore, if $k$ is even, then $A$ is perfect. 
\end{lemma}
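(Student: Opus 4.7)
The plan is to work in the complement. Observe that in $\overline{A}$ each $X_i$ is a stable set, $X_i$ is complete to $X_{i-1} \cup X_{i+1}$, and $X_i$ is anticomplete to $V(A) \setminus (X_{i-1} \cup X_i \cup X_{i+1})$. Thus $\overline{A}$ is precisely the ``blow-up'' of the cycle $C_k$ obtained by replacing each vertex $v_i$ of $C_k$ with the stable set $X_i$ and each edge $v_iv_{i+1}$ with a complete bipartite graph between $X_i$ and $X_{i+1}$.

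The main ingredient is the elementary fact that a blow-up of a bipartite graph is itself bipartite: a bipartition $(L,R)$ of the base graph lifts to a bipartition of the blow-up by taking the unions of the stable sets replacing vertices of $L$ and of $R$, respectively. In particular, every such blow-up is perfect, and so are all of its induced subgraphs.

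For the first statement, fix $i \in \{1,\dots,k\}$. Then $\overline{A} \setminus X_i$ is exactly the blow-up of $C_k \setminus v_i$, where $v_i$ is the vertex of $C_k$ corresponding to $X_i$. Since $C_k \setminus v_i$ is the path $P_{k-1}$, the graph $\overline{A} \setminus X_i$ is bipartite (regardless of the parity of $k$) and hence perfect. By the perfect graph theorem, which is an immediate consequence of the SPGT~\cite{SPGT} already cited in the paper, the complement $A \setminus X_i = \overline{\overline{A} \setminus X_i}$ is perfect.

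For the second statement, when $k$ is even the cycle $C_k$ is bipartite, so $\overline{A}$ itself is a blow-up of a bipartite graph, hence bipartite, hence perfect; by the same complementation principle, $A$ is perfect. I expect no significant obstacle here: the whole lemma hinges on correctly recognizing $\overline{A}$ as a blow-up of $C_k$, after which both statements follow from the bipartiteness of $P_{k-1}$ (always) and of $C_k$ (when $k$ is even), combined with the Lovász complementation theorem.
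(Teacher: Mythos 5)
Your proof is correct and follows essentially the same route as the paper: the paper observes directly that $A \setminus X_i$ (and $A$ itself when $k$ is even) is cobipartite and invokes the Lov\'asz Perfect Graph Theorem, which is exactly your argument phrased in the complement, with the blow-up-of-$C_k$ description serving only to verify cobipartiteness. The only cosmetic difference is that you derive the complementation principle from the SPGT rather than citing Lov\'asz's theorem directly, which is equally valid.
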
 
\begin{proof} 
The Perfect Graph Theorem~\cite{Lovasz} states that a graph is perfect if and only if its complement is perfect; bipartite graphs are obviously perfect, and it follows that cobipartite graphs are also perfect. Clearly, for all $i \in \{1,\dots,k\}$, $A \setminus X_i$ is cobipartite and consequently perfect. Furthermore, if $k$ is even, then $A$ is cobipartite and consequently perfect. 
\end{proof} 

\begin{lemma} \label{lemma-hypanti-chi} Let $k \geq 5$ be an odd integer. Then all $k$-hyperantiholes $A$ satisfy $\omega(A) \geq \frac{k-1}{2}$ and $\chi(A) \leq g_k(\omega(A))$. Furthermore, there exists a sequence $\{A_n^k\}_{n=\frac{k-1}{2}}^{\infty}$ of $k$-hyperantiholes such that for all integers $n \geq \frac{k-1}{2}$, we have that $\omega(A_n^k) = n$ and $\chi(A_n^k) = g_k(n)$.\end{lemma}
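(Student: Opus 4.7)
Since $k$ is odd, $\{1, 3, \dots, k-2\}$ is an independent set of $C_k$ of size $\tfrac{k-1}{2}$; because $X_i$ is complete to $X_j$ in $A$ whenever $|i - j| \not\equiv \pm 1 \pmod{k}$, picking one vertex from each (nonempty) $X_i$ at these indices produces a clique of $A$ of size $\tfrac{k-1}{2}$, so $\omega(A) \geq \tfrac{k-1}{2}$.

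\textbf{Reduction to matching.} The upper-bound plan begins by observing $\alpha(A) = 2$: any stable set in $A$ lies in some $X_i \cup X_{i+1}$ (since indices at cyclic distance $\geq 2$ give complete $X_i, X_j$ in $A$), and $X_i \cup X_{i+1}$ has stability number $2$. Hence every color class has size $\leq 2$, size-$2$ classes correspond exactly to edges of $\overline{A}$, and $\chi(A) = N - \nu(\overline{A})$, where $N := |V(A)|$, $\nu$ denotes maximum matching, and $\overline{A}$ is the graph obtained from $C_k$ by blowing up each vertex $i$ into a stable set of size $x_i := |X_i|$. Summing $\sum_{i \in I} x_i \leq n := \omega(A)$ over all $k$ cyclic shifts $I$ of $\{1, 3, \dots, k-2\}$ (each index in $\tfrac{k-1}{2}$ of them) yields $N \leq \tfrac{2kn}{k-1}$.

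\textbf{Main obstacle: matching identity via Tutte--Berge.} The crux is $\chi(A) = \max\{\omega(A), \lceil N/2 \rceil\}$, equivalently $\nu(\overline{A}) = \min\{\lfloor N/2 \rfloor, N - \omega\}$. Setting $\delta = 1$ if $N$ is odd and $\delta = 0$ otherwise, I would apply the Tutte--Berge formula $2\nu(\overline{A}) = N - d(\overline{A})$ and show that the deficiency $d(\overline{A}) := \max_U \bigl(o(\overline{A}-U) - |U|\bigr)$ equals $\max\{\delta, 2\omega - N\}$. The lower bound $d \geq \delta$ is trivial from connectivity of $\overline{A}$, and $d \geq 2\omega - N$ is witnessed by $U := V(\overline{A}) \setminus \bigcup_{i \in I^*} X_i$ (where $I^*$ achieves $\omega$): the removed set $U$ has size $N - \omega$ and $\overline{A} - U$ consists of exactly $\omega$ isolated vertices, one per vertex of $\bigcup_{i \in I^*} X_i$, giving $o - |U| = 2\omega - N$. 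The upper bound on $d$ is the main technical step: for arbitrary $U$, set $y_i := x_i - |U \cap X_i|$ and $Z := \{i : y_i > 0\}$. If $Z = \{1, \dots, k\}$ then $\overline{A} - U$ is connected, so $o - |U| \leq \delta$. Otherwise, decompose $Z$ into maximal cyclic segments of $C_k$: a singleton segment $\{i\}$ contributes $y_i$ isolated odd components, while a segment $S$ of length $\geq 2$ contributes exactly one component whose parity is that of $\sum_{i \in S} y_i$. Direct algebra yields $o(\overline{A}-U) - |U| = 2M(U) - N$, where $M(U)$ sums $y_{i_S}$ over singleton segments $S = \{i_S\}$ and $\lceil (\sum_{i \in S} y_i)/2 \rceil$ over segments of length $\geq 2$ (using $\sum y_i + [\sum y_i \text{ odd}] = 2\lceil (\sum y_i)/2 \rceil$). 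On each length-$\geq 2$ path-segment $S$, one of the two alternating selections of every other vertex has weight $\geq \lceil (\sum_{i \in S} y_i)/2 \rceil$; taking the union of these selections over the pairwise non-adjacent segments yields a weighted independent set of $C_k$ whose total weight is $\geq M(U)$ but is $\leq \omega$ (since $y_i \leq x_i$), so $o - |U| \leq 2\omega - N$.

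\textbf{Conclusion and construction.} A short case analysis on $r := n \bmod (k-1)$ applied to $N \leq \tfrac{2kn}{k-1}$ gives $\lceil N/2 \rceil \leq g_k(n)$: Case~A ($r \leq \tfrac{k-3}{2}$) forces $N \leq 2qk + 2r$ and so $\lceil N/2 \rceil \leq qk + r = g_k(n)$, while Case~B ($r \geq \tfrac{k-1}{2}$) forces $N \leq 2qk + 2r + 1$ and so $\lceil N/2 \rceil \leq qk + r + 1 = g_k(n)$; combined with the identity this delivers $\chi(A) \leq \max\{n, g_k(n)\} = g_k(\omega(A))$. For the extremal sequence, given $n \geq \tfrac{k-1}{2}$ with $n = q(k-1) + r$, define $A_n^k$ as follows: in Case~A (which forces $q \geq 1$), set $x_i = 2q+1$ on $2r$ cyclically consecutive indices and $x_i = 2q$ on the remaining $k - 2r$ indices; in Case~B with $s := r - \tfrac{k-1}{2}$, set $x_i = 2q+2$ on $2s$ cyclically consecutive indices and $x_i = 2q+1$ elsewhere. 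Using that any independent set of $C_k$ picks at most $\lceil \ell/2 \rceil$ vertices from a block of $\ell$ cyclically consecutive indices, one verifies $\omega(A_n^k) = q(k-1) + r = n$; since $N = 2qk + 2r$ in Case~A and $N = 2qk + 2r + 1$ in Case~B, we have $\lceil N/2 \rceil = g_k(n) > n$, and the identity from the previous step gives $\chi(A_n^k) = g_k(n)$.
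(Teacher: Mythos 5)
Your proof is correct, but the central upper bound is reached by a genuinely different route from the paper's. You prove an exact formula, $\chi(A) = \max\{\omega(A), \lceil |V(A)|/2 \rceil\}$ for odd hyperantiholes, by passing to the complement (a blow-up of $C_k$ into stable sets with consecutive classes complete), using $\alpha(A) = 2$ to turn coloring into maximum matching, and then computing the Tutte--Berge deficiency exactly; your decomposition of the surviving classes into maximal cyclic segments, together with the alternating-selection argument showing $M(U) \leq \omega(A)$ (the selections from distinct segments are non-adjacent because maximal segments are separated by empty classes), does give $d(\overline{A}) = \max\{\delta, 2\omega(A) - |V(A)|\}$, and the cyclic-shift averaging bound $|V(A)| \leq \frac{2k\,\omega(A)}{k-1}$ plus the residue case analysis then yields $\chi(A) \leq g_k(\omega(A))$. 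The paper avoids matching theory entirely: it takes a smallest class $X_2$ (so $|X_2| \leq \lfloor 2\omega(A)/(k-1) \rfloor$), pads it with halves $X_1^2, X_3^2$ of the two neighboring classes to form $X_2^*$ with $\chi(A[X_2^*]) = |X_2|$, uses that $A \setminus X_2$ is cobipartite and hence perfect (Lemma~\ref{lemma-hypanti-perfect}), and a short case analysis on whether a maximum clique of $A \setminus X_2^*$ meets $X_1 \setminus X_1^2$ or $X_3 \setminus X_3^2$ gives $\chi(A) \leq \omega(A) + \lceil |X_2|/2 \rceil \leq g_k(\omega(A))$ directly. Your route is heavier machinery but buys a stronger intermediate statement---the exact hyperantihole analogue of Lemma~\ref{lemma-hyperhole-chi-formula}---while the paper's argument is more elementary and self-contained. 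Your extremal construction is the same as the paper's (your verification of $\omega(A_n^k) = n$ is only sketched, but the counting fact you invoke is exactly what the paper uses), and both conclusions finish the same way via $\lceil |V(A_n^k)|/2 \rceil = g_k(n)$.
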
 
\begin{proof} 
We begin by proving the first statement of the lemma. Let $A$ be a $k$-hyperantihole, and let $(X_1,\dots,X_k)$ be a partition of $V(A)$ into nonempty cliques such that for all $i \in \{1,\dots,k\}$, $X_i$ is complete to $V(A) \setminus (X_{i-1} \cup X_i \cup X_{i+1})$ and anticomplete to $X_{i-1} \cup X_{i+1}$, as in the definition of a $k$-hyperantihole. Since $A$ is a $k$-hyperantihole, and since $k$ is odd, we see that $\omega(A) \geq \lfloor \frac{k}{2} \rfloor = \frac{k-1}{2}$. 

By symmetry, we may assume that $|X_2| = \min\{|X_1|,\dots,|X_k|\}$. Since $\bigcup\limits_{i=1}^{(k-1)/2} X_{2i}$ is a clique, we see that $\sum\limits_{i=1}^{(k-1)/2} |X_{2i}| \leq \omega(A)$, and so by the minimality of $|X_2|$, we have that $|X_2| \leq \Big\lfloor \frac{2\omega(A)}{k-1} \Big\rfloor$. 

By construction, $X_2$ is anticomplete to $X_1 \cup X_3$ in $A$, and $|X_2| \leq |X_1|,|X_3|$. Fix any $X_1^2 \subseteq X_1$ and $X_3^2 \subseteq X_3$ such that either $|X_1^2| = \Big\lfloor |X_2|/2 \Big\rfloor$ and $|X_3^2| = \Big\lceil |X_2|/2 \Big\rceil$, or $|X_1^2| = \Big\lceil |X_2|/2 \Big\rceil$ and $|X_3^2| = \Big\lfloor |X_2|/2 \Big\rfloor$.\footnote{This way, we maintain full symmetry between $X_1$ and $X_3$.} Let $X_2^* = X_1^2 \cup X_2 \cup X_3^2$. Note that $X_2$ and $X_2^* \setminus X_2 = X_1^2 \cup X_3^2$ are cliques in $A$, they are anticomplete to each other in $A$, and they are both of size $|X_2|$. Thus, $\chi(A[X_2^*]) = |X_2|$. 

By Lemma~\ref{lemma-hypanti-perfect}, $A \setminus X_2$ is perfect. Since $A \setminus X_2^*$ is an induced subgraph of $A \setminus X_2$, it follows that $\chi(A \setminus X_2^*) = \omega(A \setminus X_2^*)$. Let $K$ be a maximum clique of $A \setminus X_2^*$. (In particular, $K \cap X_2 = \emptyset$.) Then 
\begin{displaymath} 
\begin{array}{rcl} 
\chi(A) & \leq & \chi(A \setminus X_2^*)+\chi(A[X_2^*]) 
\\
\\
& = & \omega(A \setminus X_2^*)+|X_2| 
\\
\\
& = & |K|+|X_2| 
\\
\\
& = & |K \cup X_2|. 
\end{array} 
\end{displaymath} 

Suppose first that $K$ intersects neither $X_1 \setminus X_1^2$ nor $X_3 \setminus X_3^2$. Since $K \subseteq V(A) \setminus X_2^*$, it follows that $K \cap (X_1 \cup X_3) = \emptyset$. Then $X_2$ is complete to $K$. Thus, $K \cup X_2$ is a clique of $A$, and it follows that $|K \cup X_2| \leq \omega(A)$; consequently, 
\begin{displaymath} 
\begin{array}{ccc cccc} 
\chi(A) & \leq & |K \cup X_2| & \leq & \omega(A) & \leq & g_k(\omega(A)), 
\end{array} 
\end{displaymath} 
and we are done. 

Suppose now that $K$ intersects at least one of $X_1 \setminus X_1^2$ and $X_3 \setminus X_3^2$; by symmetry, we may assume that $K \cap (X_1 \setminus X_1^2) \neq \emptyset$. Then $K \cup X_1^2$ is a clique of $A$,\footnote{Since $K \subseteq V(A) \setminus X_2^*$ and $X_1^2 \subseteq X_2^*$, we have that $K$ and $X_1^2$ are disjoint.} and it follows that $|K \cup X_1^2| \leq \omega(A)$; consequently, 
\begin{displaymath} 
\begin{array}{ccc cc} 
|K| & \leq & \omega(A)-|X_1^2| & \leq & \omega(A)-\Big\lfloor |X_2|/2 \Big\rfloor, 
\end{array} 
\end{displaymath} 
and so 
\begin{displaymath} 
\begin{array}{rcl} 
\chi(A) & \leq & |K|+|X_2| 
\\
\\
& \leq & (\omega(A)-\Big\lfloor |X_2|/2 \Big\rfloor)+|X_2| 
\\
\\
& = & \omega(A)+\Big\lceil |X_2|/2 \Big\rceil 
\\
\\
& \leq & \omega(A)+\Big\lceil \Big\lfloor \frac{2\omega(A)}{k-1} \Big\rfloor/2 \Big\rceil. 
\end{array} 
\end{displaymath} 
By Lemma~\ref{lemma-gk-calculation}, we now have that 
\begin{displaymath} 
\begin{array}{ccc cc} 
\chi(A) & \leq & \omega(A)+\Big\lceil \Big\lfloor \frac{2\omega(A)}{k-1} \Big\rfloor/2 \Big\rceil & = & g_k(\omega(A)), 
\end{array} 
\end{displaymath} 
and again we are done. This proves the first statement of the lemma. 

It remains to prove the second statement of the lemma. We fix an integer $n \geq \frac{k-1}{2}$, and we construct $A_n^k$ as follows. Set $m = \lfloor \frac{n}{k-1} \rfloor$ and $\ell = n-(k-1)m$. Clearly, $m$ is a nonnegative integer, $\ell \in \{0,\dots,k-2\}$, $n = (k-1)m+\ell$, and $n \equiv \ell$ (mod $k-1$). Now, let $X_1,\dots,X_k$ be pairwise disjoint sets such that for all $i \in \{1,\dots,k\}$, 
\begin{itemize} 
\item if $0 \leq \ell \leq \frac{k-3}{2}$, then $|X_1| = \dots = |X_{2\ell}| = 2m+1$ and $|X_{2\ell+1}| = \dots = |X_k| = 2m$; 
\item if $\frac{k-1}{2} \leq \ell \leq k-2$, then $|X_1| = \dots = |X_{2\ell-k+1}| = 2m+2$ and $|X_{2\ell-k+2}| = \dots = |X_k| = 2m+1$. 
\end{itemize} 
Since $n \geq \frac{k-1}{2}$, sets $X_1,\dots,X_k$ are all nonempty. Let $A_n^k$ be the graph with vertex set $V(A_n^k) = X_1 \cup \dots \cup X_k$, and with adjacency as follows: 
\begin{itemize} 
\item $X_1,\dots,X_k$ are all cliques; 
\item for all $i \in \{1,\dots,k\}$, $X_i$ is complete to $V(A_n^k) \setminus (X_{i-1} \cup X_i \cup X_{i+1})$ and anticomplete to $X_{i-1} \cup X_{i+1}$. 
\end{itemize} 
Clearly, $A_n^k$ is a $k$-hyperantihole. We must show that $\omega(A_n^k) = n$ and $\chi(A_n^k) = g_k(n)$. 

We first show that $\omega(A_n^k) = n$. Suppose first that $0 \leq \ell \leq \frac{k-3}{2}$. Now $2\ell$ consecutive $X_i$'s are of size $2m+1$ (since they are consecutive, at most $\ell$ of them can be included in a clique of $A_n^k$), and all the other $X_i$'s are of size $2m$. So, a maximum clique of $A_n^k$ is the union of $\ell$ sets $X_i$ of size $2m+1$, and of $\frac{k-1}{2}-\ell$ sets $X_i$ of size $2m$. It follows that 
\begin{displaymath} 
\begin{array}{ccc ccc ccc} 
\omega(A_n^k) & = & \ell(2m+1)+\Big(\frac{k-1}{2}-\ell\Big)2m & = & (k-1)m+\ell & = & n, 
\end{array} 
\end{displaymath} 
which is what we needed. 

Suppose now that $\frac{k-1}{2} \leq \ell \leq k-2$. Then $2\ell-k+1$ consecutive $X_i$'s are of size $2m+2$ (since they are consecutive, at most $\lceil \frac{2\ell-k+1}{2} \rceil = \ell-\frac{k-1}{2}$ of them can be included in a clique of $A_n^k$), and all the other $X_i$'s are of size $2m+1$. So, a maximum clique of $A_n^k$ is the union of $\ell-\frac{k-1}{2}$ sets $X_i$ of size $2m+2$, and of $\frac{k-1}{2}-(\ell-\frac{k-1}{2}) = k-\ell-1$ sets $X_i$ of size $2m+1$. It follows that 
\begin{displaymath} 
\begin{array}{rcl} 
\omega(A_n^k) & = & \Big(\ell-\frac{k-1}{2}\Big)(2m+2)+(k-\ell-1)(2m+1) 
\\
\\
& = & (k-1)m+\ell 
\\
\\
& = & n, 
\end{array} 
\end{displaymath} 
which is what we needed. 

We have now shown that $\omega(A_n^k) = n$. It remains to show that $\chi(A_n^k) = g_k(n)$. But by the first statement of the lemma, we have that $\chi(A_n^k) \leq g_k(n)$, and so in fact, it suffices to show that $\chi(A_n^k) \geq g_k(n)$. Clearly, $\chi(A_n^k) \geq \Big\lceil \frac{|V(A_n^k)|}{\alpha(A_n^k)} \Big\rceil$, and since $A_n^k$ is a hyperantihole, we see that $\alpha(A_n^k) = 2$. Thus, $\chi(A_n^k) \geq \Big\lceil \frac{1}{2}|V(A_n^k)| \Big\rceil$. 

Suppose first that $0 \leq \ell \leq \frac{k-3}{2}$. Then $g_k(n) = \lfloor \frac{kn}{k-1} \rfloor$, and we have that 
\begin{displaymath} 
\begin{array}{rcl} 
\chi(A_k^n) & \geq & \Big\lceil \frac{1}{2}|V(A_n^k)| \Big\rceil
\\
\\
& = & \Big\lceil \frac{1}{2}\Big(2\ell(2m+1)+(k-2\ell)2m\Big) \Big\rceil
\\
\\
& = & km+\ell 
\\
\\
& = & n+m 
\\
\\
& = & \lfloor \frac{kn}{k-1} \rfloor 
\\
\\
& = & g_k(n), 
\end{array} 
\end{displaymath} 
which is what we needed. 

Suppose now that $\frac{k-1}{2} \leq \ell \leq k-2$. Since $\ell \neq 0$, we see that $\frac{kn}{k-1}$ is not an integer, and so $\lfloor \frac{kn}{k-1} \rfloor+1 = \lceil \frac{kn}{k-1} \rceil$. Further, since $\frac{k-1}{2} \leq \ell \leq k-2$, we have that $g_k(n) = \lceil \frac{kn}{k-1} \rceil$. We then see that 
\begin{displaymath} 
\begin{array}{rcl} 
\chi(A_k^n) & \geq & \Big\lceil \frac{1}{2}|V(A_n^k)| \Big\rceil
\\
\\
& = & \Big\lceil \frac{1}{2}\Big((2\ell-k+1)(2m+2)+(2k-2\ell-1)(2m+1)\Big) \Big\rceil
\\
\\
& = & \Big\lceil km+\ell+\frac{1}{2} \Big\rceil
\\
\\
& = & km+\ell+1 
\\
\\
& = & n+m+1 
\\
\\
& = & \lfloor \frac{kn}{k-1} \rfloor+1  
\\
\\
& = & \lceil \frac{kn}{k-1} \rceil 
\\
\\
& = & g_k(n), 
\end{array} 
\end{displaymath} 
which is what we needed. This proves the second statement of the lemma. 
\end{proof} 

\begin{theorem} \label{thm-AAk-chi-optimal} Let $k \geq 4$ be an integer. Then $\mathcal{A}_k$ is $\chi$-bounded. Furthermore, if $k$ is even, then the identity function $i_{\mathbb{N}}$ is the optimal $\chi$-bounding function for $\mathcal{A}_k$, and if $k$ is odd, then $g_k$ is the optimal $\chi$-bounding function for $\mathcal{A}_k$. 
\end{theorem}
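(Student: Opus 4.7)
The plan is to split on the parity of $k$, and in each case combine the fact that $\mathcal{A}_k$ contains all complete graphs (take a $k$-hyperantihole with one $X_i$ of the desired size and the remaining $X_j$'s singletons; the clique $X_i$ itself gives an induced copy of $K_n$) with the structural information supplied by Lemmas~\ref{lemma-hypanti-perfect} and~\ref{lemma-hypanti-chi}.

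For even $k$, Lemma~\ref{lemma-hypanti-perfect} says that every $k$-hyperantihole is perfect; since perfection is inherited by induced subgraphs, every $G \in \mathcal{A}_k$ satisfies $\chi(G) = \omega(G) = i_{\mathbb{N}}(\omega(G))$. So $i_{\mathbb{N}}$ is a $\chi$-bounding function, and it is obviously optimal because $K_n \in \mathcal{A}_k$ witnesses $\chi(K_n) = n = i_{\mathbb{N}}(n)$ for every $n \in \mathbb{N}$.

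For odd $k$, I would first show that $\mathcal{A}_k$ is $\chi$-bounded by $g_k$. Fix $G \in \mathcal{A}_k$, so $G$ is an induced subgraph of some $k$-hyperantihole $A$ with partition $(X_1,\dots,X_k)$ as in the definition. Set $Y_i = V(G) \cap X_i$ for each $i$. If some $Y_i$ is empty, then $G$ is an induced subgraph of $A \setminus X_i$, which is perfect by Lemma~\ref{lemma-hypanti-perfect}, and therefore $\chi(G) = \omega(G) \leq g_k(\omega(G))$, where the last inequality uses that $g_k(n) \geq n$ for all $n$ (immediate from $\tfrac{k}{k-1} > 1$). If instead every $Y_i$ is nonempty, then an easy check from the definitions shows that $G$ itself is a $k$-hyperantihole with partition $(Y_1,\dots,Y_k)$, so the first part of Lemma~\ref{lemma-hypanti-chi} yields $\chi(G) \leq g_k(\omega(G))$.

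For optimality when $k$ is odd, the sequence $\{A_n^k\}_{n \geq (k-1)/2}$ from the second part of Lemma~\ref{lemma-hypanti-chi} handles every $n \geq \tfrac{k-1}{2}$. For $1 \leq n \leq \tfrac{k-3}{2}$, note that $n \leq k-2$ and $n \equiv n \pmod{k-1}$ with $n \leq \tfrac{k-3}{2}$, so from the definition $g_k(n) = \lfloor \tfrac{kn}{k-1} \rfloor$; moreover $\tfrac{kn}{k-1} < n+1$ is equivalent to $n < k-1$, which holds, so $g_k(n) = n$. Hence $K_n \in \mathcal{A}_k$ witnesses $\chi(K_n) = n = g_k(n)$ in this range. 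The main obstacle is simply the split into the two structural cases (hyperantihole vs.\ missing some $X_i$), since both the quantitative bound on hyperantiholes and the extremal constructions have already been packaged in Lemmas~\ref{lemma-hypanti-perfect} and~\ref{lemma-hypanti-chi}; the remaining work is an arithmetic check on $g_k$ at small $n$.
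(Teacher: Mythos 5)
Your proposal is correct and follows essentially the same route as the paper: the even case via perfection (Lemma~\ref{lemma-hypanti-perfect}), and the odd case via the dichotomy that a graph in $\mathcal{A}_k$ is either a $k$-hyperantihole or an induced subgraph of some $A \setminus X_i$ (hence perfect), combined with both parts of Lemma~\ref{lemma-hypanti-chi} and complete graphs for $n \leq \frac{k-3}{2}$. The only difference is that you spell out details (such as the verification that $g_k(n) = n$ for small $n$ and that $K_n \in \mathcal{A}_k$) that the paper treats as immediate.
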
 
\begin{proof} 
If $k$ is even, then by Lemma~\ref{lemma-hypanti-perfect}, all graphs in $\mathcal{A}_k$ are perfect, and it follows that $i_{\mathbb{N}}$ is the optimal $\chi$-bounding function for $\mathcal{A}_k$. 

Suppose now that $k$ is odd. Clearly, all $k$-hyperantiholes belong to $\mathcal{A}_k$; on the other hand, it follows from Lemma~\ref{lemma-hypanti-perfect} that all graphs in $\mathcal{A}_k$ are either $k$-hyperantiholes or perfect graphs. So, by Lemma~\ref{lemma-hypanti-chi}, $\mathcal{A}_k$ is $\chi$-bounded by $g_k$. It remains to establish the optimality of $g_k$. Fix $n \in \mathbb{N}$. If $n \leq \frac{k-3}{2}$, then $g_k(n) = n$, and we observe that $K_n \in \mathcal{A}_k$, $\omega(K_n) = n$, and $\chi(K_n) = n = g_k(n)$. On the other hand, if $n \geq \frac{k-1}{2}$, then we let $A_n^k$ be as in Lemma~\ref{lemma-hypanti-chi}, and we observe that $A_n^k \in \mathcal{A}_k$, $\omega(A_n^k) = n$, and $\chi(A_n^k) = g_k(n)$. This proves that the $\chi$-bounding function $g_k$ for $\mathcal{A}_k$ is indeed optimal. 
\end{proof} 

\begin{corollary} \label{cor-AAk-chi-optimal} Let $k \geq 4$ be an integer. Then $\mathcal{A}_{\geq k}$ is $\chi$-bounded. Furthermore, if $k$ is even, then $g_{k+1}$ is the optimal $\chi$-bounding function for $\mathcal{A}_{\geq k}$, and if $k$ is odd, then $g_k$ is the optimal $\chi$-bounding function for $\mathcal{A}_{\geq k}$. 
\end{corollary}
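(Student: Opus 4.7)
The plan is to obtain Corollary~\ref{cor-AAk-chi-optimal} as an immediate consequence of Theorem~\ref{thm-AAk-chi-optimal} together with Lemma~\ref{lemma-fTfkgk}(c), in direct analogy with the one-line proofs of Corollaries~\ref{cor-HHk-chi-optimal} and~\ref{cor-RRk-chi-optimal}. Since $\mathcal{A}_{\geq k} = \bigcup_{i \geq k} \mathcal{A}_i$, every graph in the class lies in $\mathcal{A}_i$ for some $i \geq k$, so the task reduces to dominating each per-index bound from Theorem~\ref{thm-AAk-chi-optimal} by the claimed uniform bound, and then exhibiting optimal witnesses inside $\mathcal{A}_{\geq k}$.

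For the upper bound, I would take any $G \in \mathcal{A}_{\geq k}$ and fix some $i \geq k$ with $G \in \mathcal{A}_i$. By Theorem~\ref{thm-AAk-chi-optimal}, $\chi(G) \leq \omega(G)$ when $i$ is even and $\chi(G) \leq g_i(\omega(G))$ when $i$ is odd. When $k$ is odd, I use that $g_k(n) \geq n$ (so the even-$i$ case is absorbed) and that iterating Lemma~\ref{lemma-fTfkgk}(c) gives $g_i \leq g_k$ for every odd $i \geq k$, yielding $\chi(G) \leq g_k(\omega(G))$ in all cases. When $k$ is even, the same argument works verbatim with $g_{k+1}$ in place of $g_k$, using that $g_{k+1}(n) \geq n$ and that $g_i \leq g_{k+1}$ for every odd $i \geq k+1$.

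For optimality, I would simply invoke the witnesses already furnished by Theorem~\ref{thm-AAk-chi-optimal}. When $k$ is odd, $\mathcal{A}_k \subseteq \mathcal{A}_{\geq k}$ contains, for every $n \in \mathbb{N}$, a graph with clique number $n$ and chromatic number exactly $g_k(n)$ (namely the graphs $A_n^k$ built in Lemma~\ref{lemma-hypanti-chi}, together with complete graphs for small $n$), so the bound $g_k$ is attained. When $k$ is even, the analogous witnesses in $\mathcal{A}_{k+1} \subseteq \mathcal{A}_{\geq k}$ attain $g_{k+1}(n)$.

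There is no substantive obstacle: the proof really is ``immediate'' in the same sense as the earlier corollaries, and the only content is the bookkeeping described above. If anything calls for mild care, it is to remember that $\mathcal{A}_{\geq k}$ mixes both parities of length, so one must separately handle the even-length members (controlled by the identity function from Theorem~\ref{thm-AAk-chi-optimal}, and thus trivially dominated) and the odd-length members (controlled by a $g_i$, pushed down to $g_k$ or $g_{k+1}$ via Lemma~\ref{lemma-fTfkgk}(c)).
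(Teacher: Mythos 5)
Your proposal is correct and follows essentially the same route as the paper, whose proof of this corollary is literally the one-liner ``this follows immediately from Lemma~\ref{lemma-fTfkgk}(c) and Theorem~\ref{thm-AAk-chi-optimal}''; your write-up just makes explicit the bookkeeping (using $g_k(n)\geq n$ to absorb even lengths, iterating Lemma~\ref{lemma-fTfkgk}(c) for odd lengths, and taking the witnesses $A_n^k$, respectively $A_n^{k+1}$, together with complete graphs for small $n$) that the paper leaves implicit.
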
 
\begin{proof} 
This follows immediately from Lemma~\ref{lemma-fTfkgk}(c) and Theorem~\ref{thm-AAk-chi-optimal}. 
\end{proof} 

We remind the reader that the function $f_{\text{T}}:\mathbb{N} \rightarrow \mathbb{N}$ is given by 
\begin{displaymath} 
\begin{array}{lll} 
f_{\text{T}}(n) & = & \left\{\begin{array}{lll} 
\lfloor 5n/4 \rfloor & \text{if} & \text{$n \equiv 0,1$ (mod $4$)} 
\\
\\
\lceil 5n/4 \rceil & \text{if} & \text{$n \equiv 2,3$ (mod $4$)} 
\end{array}\right. 
\end{array}
\end{displaymath} 
for all $n \in \mathbb{N}$.

Note that $\mathcal{H}_{\geq 4}$ is the class of all induced subgraphs of hyperholes, and that $\mathcal{A}_{\geq 4}$ is the class of all induced subgraphs of hyperantiholes. Furthermore, by Lemma~\ref{lemma-RRk-hered}, $\mathcal{R}_{\geq 4}$ contains all induced subgraphs of rings. In particular, $\mathcal{H}_{\geq 4} \subseteq \mathcal{R}_{\geq 4}$. 

\begin{theorem} \label{thm-HH-AA-RR-chi-optimal} Classes $\mathcal{H}_{\geq 4}$, $\mathcal{A}_{\geq 4}$, and $\mathcal{R}_{\geq 4}$ are $\chi$-bounded. Furthermore, $f_{\text{T}}$ is the optimal $\chi$-bounding function for all three classes. 
\end{theorem}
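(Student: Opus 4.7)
The plan is to observe that Theorem~\ref{thm-HH-AA-RR-chi-optimal} is essentially a packaging of the three corollaries \ref{cor-HHk-chi-optimal}, \ref{cor-AAk-chi-optimal}, and \ref{cor-RRk-chi-optimal}, specialized to $k = 4$, together with the identity $f_{\text{T}} = f_5 = g_5$ recorded in Lemma~\ref{lemma-fTfkgk}. So there is essentially no new content to prove; the task is to make the identifications explicit.

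First, I would apply Corollary~\ref{cor-HHk-chi-optimal} with $k = 4$. Since $k = 4$ is even, the corollary states that $\mathcal{H}_{\geq 4}$ is $\chi$-bounded and that $f_{k+1} = f_5$ is its optimal $\chi$-bounding function. By Lemma~\ref{lemma-fTfkgk}, $f_5 = f_{\text{T}}$, so $f_{\text{T}}$ is the optimal $\chi$-bounding function for $\mathcal{H}_{\geq 4}$.

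Next, I would apply Corollary~\ref{cor-RRk-chi-optimal} with $k = 4$ in exactly the same way: since $k = 4$ is even, $f_{k+1} = f_5 = f_{\text{T}}$ is the optimal $\chi$-bounding function for $\mathcal{R}_{\geq 4}$. Finally, I would apply Corollary~\ref{cor-AAk-chi-optimal} with $k = 4$: since $k = 4$ is even, $g_{k+1} = g_5 = f_{\text{T}}$ (again by Lemma~\ref{lemma-fTfkgk}) is the optimal $\chi$-bounding function for $\mathcal{A}_{\geq 4}$.

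There is no real obstacle here, since all the work has been done in the previous sections; the only thing to remember is the identification $f_{\text{T}} = f_5 = g_5$ that makes all three optimal $\chi$-bounding functions coincide when $k = 4$. A one-paragraph proof suffices, with three sentences each invoking one of the corollaries.
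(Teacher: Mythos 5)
Your proposal is correct and follows exactly the paper's own argument: the paper likewise deduces the theorem immediately from Corollaries~\ref{cor-HHk-chi-optimal}, \ref{cor-RRk-chi-optimal}, and~\ref{cor-AAk-chi-optimal} (with $k=4$) together with the identity $f_{\text{T}} = f_5 = g_5$ from Lemma~\ref{lemma-fTfkgk}. No gaps; nothing further is needed.
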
 
\begin{proof} 
By Lemma~\ref{lemma-fTfkgk}, we have that $f_{\text{T}} = f_5 = g_5$. The result now follows immediately from Corollaries~\ref{cor-HHk-chi-optimal},~\ref{cor-RRk-chi-optimal}, and~\ref{cor-AAk-chi-optimal}. 
\end{proof} 

\begin{theorem} \label{thm-GT-chi} $\mathcal{G}_{\text{T}}$ is $\chi$-bounded. Furthermore, $f_{\text{T}}$ is the optimal $\chi$-bounding function for $\mathcal{G}_{\text{T}}$. 
\end{theorem}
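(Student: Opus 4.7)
The plan is to combine the decomposition theorem for $\mathcal{G}_{\text{T}}$ (Theorem~\ref{thm-GT-decomp}) with the optimal $\chi$-bounds already established for rings, hyperholes, and hyperantiholes (Theorems~\ref{thm-HH-AA-RR-chi-optimal} and~\ref{thm-AAk-chi-optimal}), and to propagate those bounds through clique-cutsets by induction on $|V(G)|$. The clique-cutset step works cleanly precisely because $f_{\text{T}}$ is nondecreasing (Lemma~\ref{lemma-fTfkgk}).

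For the upper bound, I would fix $G \in \mathcal{G}_{\text{T}}$ and argue as follows. If $G$ admits a clique-cut-partition $(A,B,C)$, set $G_1 = G[A \cup C]$ and $G_2 = G[B \cup C]$; then $\omega(G) = \max\{\omega(G_1),\omega(G_2)\}$ (since every clique of $G$ is fully contained in one side, because $A$ is anticomplete to $B$) and $\chi(G) = \max\{\chi(G_1),\chi(G_2)\}$ (align optimal colorings of $G_1$ and $G_2$ on the clique $C$, where each vertex must already receive a distinct color in both colorings). Since $G_1,G_2$ are proper induced subgraphs lying in the hereditary class $\mathcal{G}_{\text{T}}$, the induction hypothesis combined with the monotonicity of $f_{\text{T}}$ gives $\chi(G_i) \leq f_{\text{T}}(\omega(G_i)) \leq f_{\text{T}}(\omega(G))$, which closes the induction. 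If $G$ admits no clique-cutset, Theorem~\ref{thm-GT-decomp} leaves three cases: $G$ is a complete graph (trivial), a ring (then $G \in \mathcal{R}_{\geq 4}$, and Theorem~\ref{thm-HH-AA-RR-chi-optimal} gives $\chi(G) \leq f_{\text{T}}(\omega(G))$), or a $7$-hyperantihole (then $G \in \mathcal{A}_7$, so Theorem~\ref{thm-AAk-chi-optimal} gives $\chi(G) \leq g_7(\omega(G))$, and applying Lemma~\ref{lemma-fTfkgk}(c) twice yields $g_7 \leq g_5 = f_{\text{T}}$, hence $\chi(G) \leq f_{\text{T}}(\omega(G))$).

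For optimality, I would exhibit, for each $n \in \mathbb{N}$, a graph in $\mathcal{G}_{\text{T}}$ with clique number $n$ and chromatic number $f_{\text{T}}(n)$. For $n=1$ take $K_1$. For $n \geq 2$, take the $5$-hyperhole $H_n^5$ from Lemma~\ref{lemma-hyp-chi}, which satisfies $\omega(H_n^5)=n$ and $\chi(H_n^5)=f_5(n)=f_{\text{T}}(n)$ (the last equality coming from $f_{\text{T}} = f_5$ in Lemma~\ref{lemma-fTfkgk}). Every hyperhole is a ring, and Lemma~\ref{lemma-ring-chordal}(d) guarantees every ring lies in $\mathcal{G}_{\text{T}}$, so $H_n^5 \in \mathcal{G}_{\text{T}}$.

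I do not expect any real obstacle: the proof is essentially an assembly of prior results. The only point needing a moment's attention is the $7$-hyperantihole case in the upper bound, which could in principle give a bound worse than $f_{\text{T}}$, but this is ruled out by the monotonicity statement $g_k \geq g_{k+2}$ in Lemma~\ref{lemma-fTfkgk}(c), specifically $g_7 \leq g_5 = f_{\text{T}}$. All heavy lifting has already been done in Theorems~\ref{thm-ring-hyperhole}, \ref{thm-GT-decomp}, and \ref{thm-HH-AA-RR-chi-optimal}, together with the arithmetic monotonicity lemmas.
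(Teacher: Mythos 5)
Your proposal is correct and follows essentially the same route as the paper: induction on $|V(G)|$ using Theorem~\ref{thm-GT-decomp}, the clique-cutset case handled via monotonicity of $f_{\text{T}}$, the basic classes handled by the optimal bounds for rings and hyperantiholes, and optimality witnessed by the $5$-hyperholes $H_n^5$. The only cosmetic difference is that you bound the $7$-hyperantihole case via $g_7 \leq g_5 = f_{\text{T}}$ (one application of Lemma~\ref{lemma-fTfkgk}(c) suffices), whereas the paper cites Theorem~\ref{thm-HH-AA-RR-chi-optimal} for $\mathcal{A}_{\geq 4}$ directly; these amount to the same thing.
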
 
\begin{proof} 
We begin by showing $f_{\text{T}}$ is a $\chi$-bounding function for $\mathcal{G}_{\text{T}}$. First, by Lemma~\ref{lemma-fTfkgk}, we have that $f_{\text{T}}$ is nondecreasing, and that $f_{\text{T}} = f_5 = g_5$. Now, fix $G \in \mathcal{G}_{\text{T}}$, and assume inductively that for all $G' \in \mathcal{G}_{\text{T}}$ such that $|V(G')| < |V(G)|$, we have that $\chi(G') \leq f_{\text{T}}(\omega(G'))$. 

By Theorem~\ref{thm-GT-decomp}, we know that either $G$ is a complete graph, a ring, or a 7-hyperantihole, or $G$ admits a clique-cutset. If $G$ is a complete graph, a ring, or a 7-hyperantihole, then $G \in \mathcal{R}_{\geq 4} \cup \mathcal{A}_{\geq 4}$, and Theorem~\ref{thm-HH-AA-RR-chi-optimal} guarantees that $\chi(G) \leq f_{\text{T}}(\omega(G))$. It remains to consider the case when $G$ admits a clique-cutset. Let $(A,B,C)$ be a clique-cut-partition of $G$, and set $G_A = G[A \cup C]$ and $G_B = G[B \cup C]$. Clearly, $\chi(G) = \max\{\chi(G_A),\chi(G_B)\}$. Using the induction hypothesis and the fact that $f_{\text{T}}$ is nondecreasing, we now get that 
\begin{displaymath} 
\begin{array}{rcl} 
\chi(G) & = & \max\{\chi(G_A),\chi(G_B)\} 
\\
\\
& \leq & \max\{f_{\text{T}}(\omega(G_A)),f_{\text{T}}(\omega(G_B))\} 
\\
\\
& \leq & f_{\text{T}}(\omega(G)), 
\end{array} 
\end{displaymath} 
which is what we needed. This proves that $f_{\text{T}}$ is indeed a $\chi$-bounding function for $\mathcal{G}_{\text{T}}$. 

It remains to establish the optimality of $f_{\text{T}}$. Let $n \in \mathbb{N}$; we must exhibit a graph $G \in \mathcal{G}_{\text{T}}$ such that $\omega(G) = n$ and $\chi(G) = f_{\text{T}}(n)$. If $n = 1$, then we observe that $K_1 \in \mathcal{G}_{\text{T}}$, $\omega(K_1) = 1$, and $\chi(K_1) = 1 = f_{\text{T}}(1)$. So assume that $n \geq 2$. Let $H_n^5$ be as in the statement of Lemma~\ref{lemma-hyp-chi}. Then $H_n^5$ is a 5-hyperhole, and it is easy to see that all hyperholes belong to $\mathcal{G}_{\text{T}}$;\footnote{Alternatively, we observe that every hyperhole is a ring, and by Lemma~\ref{lemma-ring-chordal}(d), all rings belong to $\mathcal{G}_{\text{T}}$.} thus, $H_n^5 \in \mathcal{G}_{\text{T}}$. Further, since $f_{\text{T}} = f_5$, Lemma~\ref{lemma-hyp-chi} guarantees that $\omega(H_n^5) = n$ and $\chi(H_n^5) = f_5(n) = f_{\text{T}}(n)$. Thus, $f_{\text{T}}$ is indeed the optimal $\chi$-bounding function for $\mathcal{G}_{\text{T}}$. 
\end{proof}

\section{Class $\mathcal{G}_{\text{T}}$ and Hadwiger's conjecture} \label{sec:Hadwiger} 

In this section, we prove Hadwiger's conjecture for the class $\mathcal{G}_{\text{T}}$ (see Theorem~\ref{thm-Hadwiger-GT}). Recall that a graph is {\em perfect} if all its induced subgraphs $H$ satisfy $\chi(H) = \omega(H)$. Obviously, Hadwiger's conjecture is true for perfect graphs: every perfect graph $G$ contains $K_{\chi(G)}$ an induced subgraph, and therefore as a minor as well. 

\begin{lemma} \label{lemma-Hadwiger-hyperhole} Every hyperhole $H$ contains $K_{\chi(H)}$ as a minor. 
\end{lemma}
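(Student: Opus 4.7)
The plan proceeds by case analysis, reducing all but one case to the observation that whenever $\chi(H)=\omega(H)$, the complete graph $K_{\chi(H)}$ is an induced subgraph (and hence a minor) of $H$. If $k$ is even, then by Lemma~\ref{lemma-even-ring-col-alg} the hyperhole $H$ is perfect, so $\chi(H)=\omega(H)$ and we are done. Otherwise $k$ is odd, in which case $\alpha(H)=(k-1)/2$, and Lemma~\ref{lemma-hyperhole-chi-formula} gives $\chi(H)=\max\{\omega(H),\lceil 2|V(H)|/(k-1)\rceil\}$; again, if the maximum is $\omega(H)$ we are done. This reduces everything to the substantive case: $k$ odd and $s:=\chi(H)=\lceil 2|V(H)|/(k-1)\rceil>\omega(H)$, so $\omega(H)\le s-1$.

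In that case, set $\ell=(k-1)/2$ and $n=|V(H)|$, so $(s-1)\ell<n\le s\ell$, and argue by strong induction on $n$. Suppose first that $n\ge (s-1)\ell+2$. Then some clique $X_i$ of the ring partition of $H$ satisfies $|X_i|\ge 2$: if all were singletons then $n=k=2\ell+1$ and $s=3$, which is incompatible with $n\ge (s-1)\ell+2=2\ell+2$. Pick any $v\in X_i$ and set $H':=H\setminus v$. Then $H'$ is again a $k$-hyperhole, and Lemma~\ref{lemma-hyperhole-chi-formula} together with $\omega(H')\le\omega(H)\le s-1$ and $\lceil 2(n-1)/(k-1)\rceil=s$ gives $\chi(H')=s$. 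The inductive hypothesis applied to $H'$ then yields a $K_s$-minor in $H'$, which is also a minor of $H$.

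The genuine base case, and the main obstacle of the proof, is the tight case $n=(s-1)\ell+1$: here removing any vertex strictly decreases $\chi$, so induction is unavailable and the $K_s$-minor must be built by hand. When $H$ is an odd hole ($s=3$), the three branch sets $\{v_1\}$, $\{v_2\}$, $\{v_3,\ldots,v_k\}$ visibly form a $K_3$-minor. For a general tight hyperhole, my plan is to list the vertices of $H$ cyclically by concatenating $X_1,\ldots,X_k$ in order (so that two vertices in the listing are adjacent in $H$ iff they lie in the same or in consecutive cliques), and to partition this cyclic listing into $s$ cyclically consecutive arcs of possibly varying lengths, each of which spans at most $\ell$ consecutive cliques and so induces a connected subgraph of $H$. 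Verifying pairwise adjacency of the arcs in $H$ then amounts to a cyclic-distance calculation on their clique-ranges, controlled by the tight equations $\sum_i n_i=(s-1)\ell+1$ and $n_i+n_{i+1}\le s-1$ for all $i$: these pin the profile $(n_1,\ldots,n_k)$ into a narrow family and should permit the break-points of the partition, and the placement of any short or singleton arc, to be chosen so that each clique-range, widened by one clique on each side, intersects every other arc's clique-range. Carrying out this cyclic-distance verification, and handling by hand the few extreme shapes of $(n_1,\ldots,n_k)$ that the tight constraints allow, is the main technical step I expect to have to execute.
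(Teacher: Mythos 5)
Your reduction steps are fine (the even case, the case $\chi(H)=\omega(H)$, and the induction that peels off a vertex as long as $n\geq(s-1)\ell+2$, where $\ell=(k-1)/2$), but the proof stops exactly where the lemma's actual content lies: the tight case $n=(s-1)\ell+1$. There you only describe a plan -- list the vertices cyclically, cut the listing into $s$ consecutive arcs, and hope that the break-points ``should permit'' a choice making all $\binom{s}{2}$ pairs of arcs adjacent -- and you say yourself that the cyclic-distance verification and the analysis of the admissible profiles $(n_1,\dots,n_k)$ remain to be executed. This is a genuine gap, not a routine verification: the constraints $\sum_i n_i=(s-1)\ell+1$ and $n_i+n_{i+1}\leq s-1$ do not pin the profile down to a small family (already for $k=5$, $s=4$ one has profiles such as $(2,1,2,1,1)$, $(3,1,1,1,1)$, $(2,2,1,1,1)$, and the family grows with $k$ and $s$), the adjacency of non-consecutive arcs is exactly the delicate point (consecutive arcs are automatically adjacent, ``opposite'' ones are not), and your side condition that each arc span at most $\ell$ cliques is neither needed for connectivity (any cyclically consecutive block is connected, since consecutive cliques are complete to each other) nor obviously compatible with the adjacency requirements. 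As it stands, no $K_s$-minor has been exhibited in the tight case, so the lemma is not proved.

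For comparison, the paper's proof is a short uniform construction that avoids all of this case analysis (and the induction). Choose $X_1$ with $|X_1|=\min_i|X_i|$. Then $H\setminus X_1$ is chordal, hence perfect, so $\chi(H\setminus X_1)=\omega(H\setminus X_1)=|X_j\cup X_{j+1}|$ for some $j\in\{2,\dots,k-1\}$. Since every $X_i$ has at least $|X_1|$ vertices, one can pick $|X_1|$ vertex-disjoint paths from $X_{j+2}$ to $X_{j-1}$ going the long way around through $X_1$, each meeting every clique $X_{j+2},\dots,X_k,X_1,\dots,X_{j-1}$ exactly once. Taking these paths together with the vertices of $X_j\cup X_{j+1}$ as singleton branch sets gives pairwise adjacent branch sets, i.e.\ a $K_{|X_1|+\omega(H\setminus X_1)}$-minor, and $\chi(H)\leq|X_1|+\chi(H\setminus X_1)=|X_1|+\omega(H\setminus X_1)$ finishes the argument. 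If you want to keep your framework, this construction closes your tight case immediately (tightness is never needed); but note that it also makes the parity split and the induction on $n$ superfluous.
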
 
\begin{proof} 
Let $H$ be a hyperhole, and let $k$ be its length. Let $(X_1,\dots,X_k)$ be a partition of $V(H)$ into nonempty cliques such that for all $i \in \{1,\dots,k\}$, $X_i$ is complete to $X_{i-1} \cup X_{i+1}$ and anticomplete to $V(H) \setminus (X_{i-1} \cup X_i \cup X_{i+1})$. By symmetry, we may assume that $|X_1| = \min\{|X_1|,|X_2|,\dots,|X_k|\}$. Clearly, $\chi(H \setminus X_1) = \omega(H \setminus X_1)$,\footnote{By Lemma~\ref{lemma-ring-chordal}(c), $H \setminus X_1$ is chordal, and by~\cite{Berge-German, D61}, chordal graphs are perfect. So, $H \setminus X_1$ is perfect and therefore satisfies $\chi(H \setminus X_1) = \omega(H \setminus X_1)$.} and furthermore, there exists some index $j \in \{2,\dots,k-1\}$ such that $\omega(H \setminus X_1) = |X_j \cup X_{j+1}|$. By the choice of $X_1$, we see that there are $|X_1|$ vertex-disjoint induced paths between $X_{j-1}$ and $X_{j+2}$, none of them passing through $X_j \cup X_{j+1}$. We then take our $|X_1|$ paths and the vertices of $X_j \cup X_{j+1}$ as branch sets, and we obtain a $K_{|X_1|+\omega(H \setminus X_1)}$ minor in $G$. Since $\chi(H) \leq |X_1|+\chi(H \setminus X_1) = |X_1|+\omega(H \setminus X_1)$, we conclude that $H$ contains $K_{\chi(H)}$ as a minor. 
\end{proof} 

\begin{lemma} \label{lemma-Hadwiger-ring} Every ring $R$ contains $K_{\chi(R)}$ as a minor. 
\end{lemma}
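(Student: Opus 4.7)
The plan is to reduce the ring case to the hyperhole case, where Lemma~\ref{lemma-Hadwiger-hyperhole} already does the work. Let $R$ be a $k$-ring. By Theorem~\ref{thm-ring-hyperhole}, we have
\[
\chi(R) = \max\{\chi(H) \mid \text{$H$ is a $k$-hyperhole in $R$}\},
\]
so we can fix a $k$-hyperhole $H$ induced in $R$ with $\chi(H) = \chi(R)$.

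By Lemma~\ref{lemma-Hadwiger-hyperhole}, the hyperhole $H$ contains $K_{\chi(H)}$ as a minor. Since $H$ is an induced subgraph of $R$, every minor of $H$ is automatically a minor of $R$ (the branch sets witnessing the minor in $H$ are also valid branch sets in $R$). Substituting $\chi(H) = \chi(R)$ gives that $R$ contains $K_{\chi(R)}$ as a minor, which is exactly Hadwiger's conjecture for $R$.

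There is essentially no obstacle here: the entire burden of the argument has already been discharged by Theorem~\ref{thm-ring-hyperhole} (which lets us pass from a ring to a hyperhole of the same chromatic number) and by Lemma~\ref{lemma-Hadwiger-hyperhole} (which verifies Hadwiger's conjecture inside a hyperhole via a concrete branch-set construction using a ``thinnest'' clique $X_1$ and $|X_1|$ vertex-disjoint paths through the chordal remainder $H\setminus X_1$). So the proof is a two-line composition of these two results, and I would write it as such without any further case analysis or induction.
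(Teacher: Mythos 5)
Your proposal is correct and is exactly the paper's argument: the paper also derives this lemma immediately from Theorem~\ref{thm-ring-hyperhole} (choosing a $k$-hyperhole $H$ in $R$ with $\chi(H)=\chi(R)$) together with Lemma~\ref{lemma-Hadwiger-hyperhole}, using that a minor of an induced subgraph is a minor of the whole graph. No gaps; nothing further is needed.
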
 
\begin{proof} 
This follows immediately from Theorem~\ref{thm-ring-hyperhole} and Lemma~\ref{lemma-Hadwiger-hyperhole}.
\end{proof}

\begin{lemma} \label{lemma-Hadwiger-hyperantihole} Every hyperantihole $A$ contains $K_{\chi(A)}$ as a minor. 
\end{lemma}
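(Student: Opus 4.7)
The plan is to split on the parity of $k$. If $k$ is even, then by Lemma~\ref{lemma-hypanti-perfect}, $A$ is perfect, so $\chi(A)=\omega(A)$, and $A$ itself contains $K_{\chi(A)}$ as an induced subgraph (and hence as a minor). So assume $k$ is odd, and by symmetry that $|X_2|=\min_i|X_i|$. I would then adapt the strategy of Lemma~\ref{lemma-Hadwiger-hyperhole}, with $X_2$ playing the role that $X_1$ plays there.

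Let $K^*=\bigcup_{i\in S}X_i$ be a maximum clique of $A\setminus X_2$, where $S$ is an independent set of $C_k$ not containing $2$. The perfectness of $A\setminus X_2$ (Lemma~\ref{lemma-hypanti-perfect}) gives $\chi(A\setminus X_2)=|K^*|$, so $\chi(A)\le|K^*|+|X_2|$. If $S\cap\{1,3\}=\emptyset$, then $X_2$ is complete to $K^*$ (since $X_2$ is complete to $X_4\cup\dots\cup X_k$), so $K^*\cup X_2$ is a clique of $A$, whence $\omega(A)\ge|K^*|+|X_2|\ge\chi(A)\ge\omega(A)$, forcing $\chi(A)=\omega(A)$ and realising the required $K_{\chi(A)}$-subgraph as $K^*\cup X_2$ itself. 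Otherwise, by swapping the roles of $X_1$ and $X_3$ if necessary, we may assume $1\in S$. In this case I would take the $|K^*|$ vertices of $K^*$ as singleton branch sets and construct up to $|X_2|$ additional branch sets from $V(A)\setminus K^*$, each containing at least one vertex of $X_2$, with the total number of branches equal to $\chi(A)$.

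Cross-edges between any two distinct additional branches come for free, since each such branch contains a vertex of the clique $X_2$. To make each additional branch connected in $A$ and adjacent to every vertex of $K^*$, one proceeds as follows. The $X_2$-vertex of the branch is complete to $X_4\cup\dots\cup X_k$, so it already covers adjacency to $K^*\setminus(X_1\cup X_3)$; to cover $K^*\cap X_1=X_1$, one attaches to the branch a ``hub'' vertex from $(X_4\cup\dots\cup X_{k-1})\setminus K^*$ (these groups are complete to $X_1$ and directly adjacent to $X_2$), and, if also $3\in S$, a further hub from $(X_5\cup\dots\cup X_k)\setminus K^*$ to reach $X_3$. The main obstacle is a counting argument showing that enough hubs exist simultaneously in the relevant $X_i$'s to equip all additional branches; this is exactly where the minimality $|X_2|\le|X_i|$ enters, mirroring the role of the analogous hypothesis in Lemma~\ref{lemma-Hadwiger-hyperhole}. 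Any residual vertices of $V(A)\setminus(K^*\cup X_2)$ not used as hubs are absorbed into the existing additional branches without breaking their connectivity or their adjacency to $K^*$.
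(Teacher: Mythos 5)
Your even case and your first case ($S\cap\{1,3\}=\emptyset$) are fine, and your overall frame (delete a smallest class, use perfectness of $A\setminus X_2$ to get $\chi(A)\le|K^*|+|X_2|$, then build extra branch sets through $X_2$) is the same as the paper's. The gap is in the construction of the additional branch sets: you assert that the only missing piece is a counting argument showing that $(X_4\cup\dots\cup X_{k-1})\setminus K^*$ contains enough hubs, with the minimality of $|X_2|$ doing the work. But this pool can be \emph{empty}, and then no counting argument exists. Concretely, take $k=5$ with $(|X_1|,\dots,|X_5|)=(2,2,2,3,3)$. Then $X_2$ is a smallest class, and the maximum cliques of $A\setminus X_2$ are $X_1\cup X_4$ and $X_3\cup X_5$ (size $5$, whereas $|X_1\cup X_3|=4$), so after your symmetry step you are forced into $K^*=X_1\cup X_4$, i.e.\ $1\in S$, $3\notin S$, and $(X_4\cup\dots\cup X_{k-1})\setminus K^*=X_4\setminus K^*=\emptyset$. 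Moreover this case cannot be dismissed: here $\overline{A}$ is a blown-up $C_5$ with a perfect matching, so $\chi(A)=12-6=6>5=|K^*|$, and at least one additional branch set is genuinely required (and you cannot escape by re-choosing $K^*$, since every maximum clique of $A\setminus X_2$ is of this bad type).

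The failure is structural, not just quantitative: in this situation the only vertices outside $K^*\cup X_2$ that are complete to $X_1$ lie in $X_3$, and $X_3$ is anticomplete to $X_2$, so no single hub can be adjacent to the branch's $X_2$-vertex and also cover $X_1$; the branch must be a two-edge path, e.g.\ one vertex from each of $X_2$, $X_5$, $X_3$ (with $X_5$ complete to both $X_2$ and $X_3$). This is exactly the shape the paper uses. In the paper's labelling (smallest class $X_1$; after a reflection, $K\cap X_2\neq\emptyset$), each extra branch set is a triple consisting of one vertex of $X_1$, one of $X_3$, and one of $(X_{k-1}\cup X_k)\setminus K$; the only facts needed are that $K$ misses $X_3$ and at least one of $X_{k-1},X_k$, so that $|X_3|\ge|X_1|$ and $|(X_{k-1}\cup X_k)\setminus K|\ge|X_1|$, and each such triple is connected and has an edge to every vertex of $K$ and to every other triple. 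If you replace your single-hub branches by such triples, your argument goes through; as written, the construction (and hence the proof) fails in the case described above.
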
 
\begin{proof} 
Let $A$ be a hyperantihole, and let $(X_1,\dots,X_k)$, with $k \geq 4$, be a partition of $V(A)$ into nonempty cliques, such that for all $i \in \{1,\dots,k\}$, $X_i$ is complete to $A \setminus (X_{i-1} \cup X_i \cup X_{i+1})$ and anticomplete to $X_{i-1} \cup X_{i+1}$, as in the definition of a hyperantihole. If $k = 4$, then $V(K)$ can be partitioned into two cliques (namely $X_1 \cup X_3$ and $X_2 \cup X_4$), anticomplete to each other, and the result is immediate. From now on, we assume that $k \geq 5$. 

By symmetry, we may assume that $|X_1| = \min\{|X_1|,|X_2|,\dots,|X_k|\}$. Clearly, $\chi(A) \leq \chi(A \setminus X_1)+|X_1|$.  On the other hand, by Lemma~\ref{lemma-hypanti-perfect}, $A \setminus X_1$ is perfect, and in particular, $\chi(A \setminus X_1) = \omega(A \setminus X_1)$. Let $K$ be a clique of size $\omega(A \setminus X_1)$ in $A \setminus X_1$. Then, $\chi(A) \leq |K|+|X_1|$, and so it suffices to show that $A$ contains $K_{|K|+|X_1|}$ as a minor. 

If $K \cap (X_k \cup X_2) = \emptyset$, then $X_1$ is complete to $K$ in $A$, $K \cup X_1$ is a clique of size $|K|+|X_1|$ in $A$, and we are done. 

From now on, we assume that $K$ intersects at least one of $X_2$ and $X_k$. By symmetry, we may assume that $K \cap X_2 \neq \emptyset$. Since $X_2$ is anticomplete to $X_3$, and since $K$ is a clique, we see that $K \cap X_3 = \emptyset$. Since $X_{k-1}$ and $X_k$ are anticomplete to each other, and since $K$ is a clique, we see that $K$ intersects at most one of $X_{k-1},X_k$, and we deduce that $|(X_{k-1} \cup X_k) \setminus K| \geq \min\{|X_{k-1}|,|X_k|\} \geq |X_1|$. So, there exist $|X_1|$ pairwise disjoint three-vertex subsets of $V(A) \setminus K$, each of them containing exactly one vertex from each of the sets $X_1$, $X_3$, and $X_{k-1} \cup X_k$. Clearly, each of these three-vertex sets induces a connected subgraph of $A$. We now take our $|X_1|$ three-vertex sets and all the vertices of $K$ as branch sets, and we obtain a $K_{|K|+|X_1|}$ minor in $A$. This completes the argument. 
\end{proof} 

\begin{theorem} \label{thm-Hadwiger-GT} Every graph $G \in \mathcal{G}_{\text{T}}$ contains $K_{\chi(G)}$ as a minor. 
\end{theorem}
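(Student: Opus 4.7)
The plan is to proceed by induction on $|V(G)|$, with the structural decomposition of Theorem~\ref{thm-GT-decomp} driving the case analysis and the two preceding lemmas (Lemma~\ref{lemma-Hadwiger-ring} and Lemma~\ref{lemma-Hadwiger-hyperantihole}) handling the ``basic'' cases. The base case $|V(G)| = 1$ is trivial since $K_1 = G$.

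For the inductive step, fix $G \in \mathcal{G}_{\text{T}}$ and assume the result holds for every graph in $\mathcal{G}_{\text{T}}$ with fewer vertices. Apply Theorem~\ref{thm-GT-decomp}. If $G$ is a complete graph, then $G = K_{\chi(G)}$ and we are done. If $G$ is a ring, then Lemma~\ref{lemma-Hadwiger-ring} immediately gives the desired minor. If $G$ is a $7$-hyperantihole, then Lemma~\ref{lemma-Hadwiger-hyperantihole} gives the desired minor.

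The remaining case is when $G$ admits a clique-cutset, i.e.\ there is a clique-cut-partition $(A,B,C)$ of $V(G)$. Set $G_A = G[A \cup C]$ and $G_B = G[B \cup C]$. Since $\mathcal{G}_{\text{T}}$ is hereditary, both $G_A$ and $G_B$ lie in $\mathcal{G}_{\text{T}}$, and since $A,B \neq \emptyset$, each has strictly fewer vertices than $G$. Because $C$ is a clique that separates $A$ from $B$, an optimal coloring of $G_A$ can be glued to an optimal coloring of $G_B$ (after permuting colors so they agree on the clique $C$) to produce a proper coloring of $G$; hence $\chi(G) = \max\{\chi(G_A),\chi(G_B)\}$. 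Say by symmetry $\chi(G) = \chi(G_A)$. By the induction hypothesis, $G_A$ contains $K_{\chi(G_A)} = K_{\chi(G)}$ as a minor, and since $G_A$ is an induced subgraph of $G$, so does $G$.

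No step looks genuinely hard: the two lemmas already isolate the only nontrivial content (handling rings via Theorem~\ref{thm-ring-hyperhole} and hyperholes, and handling hyperantiholes by the explicit minor construction), and the clique-cutset step reduces to the elementary observation $\chi(G) = \max\{\chi(G_A),\chi(G_B)\}$. The only place where care is needed is to confirm that $G_A$ and $G_B$ are proper induced subgraphs so the induction is well-founded, which is immediate from the definition of a clique-cut-partition (in which $A$ and $B$ are required to be nonempty).
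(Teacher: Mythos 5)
Your proposal is correct and follows essentially the same route as the paper: induction on $|V(G)|$, applying Theorem~\ref{thm-GT-decomp}, handling the basic cases via Lemmas~\ref{lemma-Hadwiger-ring} and~\ref{lemma-Hadwiger-hyperantihole}, and the clique-cutset case via $\chi(G) = \max\{\chi(G[A \cup C]),\chi(G[B \cup C])\}$ together with the induction hypothesis. The only difference is that you spell out the color-permutation argument behind that chromatic-number identity, which the paper states as ``clearly.''
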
 
\begin{proof}
Fix $G \in \mathcal{G}_{\text{T}}$, and assume inductively that every graph $G' \in \mathcal{G}_{\text{T}}$ with $|V(G')| < |V(G)|$ contains $K_{\chi(G')}$ as a minor. We must show that $G$ contains $K_{\chi(G)}$ as a minor. We apply Theorem~\ref{thm-GT-decomp}. Suppose first that $G$ admits a clique-cutset, and let $(A,B,C)$ be a clique-cut-partition of $G$. Clearly, $\chi(G) = \max\{\chi(G[A \cup C]),\chi(G[B \cup C])\}$, and the result follows from the induction hypothesis. So assume that $G$ does not admit a clique-cutset. Then Theorem~\ref{thm-GT-decomp} implies that $G$ is a complete graph, a ring, or a 7-hyperantihole; in the first case, the result is immediate, in the second, it follows from Lemma~\ref{lemma-Hadwiger-ring}, and in the third, it follows from Lemma~\ref{lemma-Hadwiger-hyperantihole}. 
\end{proof}

\small{

} 

\end{document}